\documentclass[11pt]{article}

\usepackage{amsthm}
\usepackage{authblk}

\usepackage{graphicx}
\usepackage{amsmath}
\usepackage{amssymb}
\usepackage{enumitem}
\usepackage{hyperref}
\usepackage{nameref}

\usepackage[
  left=3cm,
  right=3cm,
  top=3cm,
  bottom=3cm
]{geometry}

\usepackage{amsthm}
\usepackage{authblk}

\newtheorem{theorem}{Theorem}[section]
\newtheorem{proposition}{Proposition}[section]
\newtheorem{lemma}{Lemma}[section]
\newtheorem{corollary}{Corollary}[section]

\theoremstyle{definition}
\newtheorem{definition}{Definition}[section]
\newtheorem{example}{Example}[section]
\newtheorem{remark}{Remark}[section]

\makeatletter
\def\namedlabel#1#2{\begingroup
    #2%
    \def\@currentlabel{#2}%
    \phantomsection\label{#1}\endgroup}
\makeatother

\newenvironment{claim}[1]{\par\medskip\noindent\emph{Claim #1:}\space}{\par\medskip}
\newenvironment{claimproof}[1]{\par\noindent\emph{Proof of Claim #1:}\space}{\hfill $\square$\par\medskip}

\renewcommand{\H}{\mathcal{H}}
\newcommand{\tto}{\rightrightarrows}
\newcommand{\dd}{\, \mathrm{d}}
\newcommand{\gph}{\operatorname{gph}}

\usepackage{authblk}

\begin{document}

\title{A Filippov theorem for Volterra sweeping processes with one-sided Lipschitz perturbation}

\author[1]{Abderrahim Jourani\thanks{\texttt{abderrahim.jourani@u-bourgogne.fr}}}
\author[2,3]{Diana Narv\'aez\thanks{\texttt{diana.narvaez@postdoc.uoh.cl}}}
\author[2]{Emilio Vilches\thanks{\texttt{emilio.vilches@uoh.cl}}}

\affil[1]{Institut de Math\'ematiques de Bourgogne, UMR 5584, CNRS,
Universit\'e Bourgogne Europe, Dijon, France}

\affil[2]{Universidad de O’Higgins,
Instituto de Ciencias de la Ingeniería, Rancagua, Chile}

\affil[3]{Universidad de Chile,
Centro de Modelamiento Matemático, Santiago, Chile}

\date{\today}

\maketitle

\begin{abstract}
We prove a Filippov-type stability theorem for integro-differential sweeping processes of Volterra type with an outer multivalued perturbation, in a separable Hilbert space, under the assumptions that the moving sets are uniformly prox-regular and Lipschitz continuous with respect to the Hausdorff distance and that the perturbation is one-sided Lipschitz. Given an absolutely continuous solution of the system perturbed both in the state argument of the multivalued term and by an outer integrable term, we show the existence of a solution of the original Volterra sweeping process and estimate the distance between the two trajectories explicitly in terms of the data of the problem. The estimate becomes sharper when only the outer perturbation is present, and it recovers the Lipschitz dependence on the initial condition of Filippov's classical theorem. The proof combines a reduction of the constrained dynamics to an unconstrained differential inclusion, measurable selection arguments, and an enhanced version of Gr\"onwall's inequality established in this work. As an application, we obtain the Lipschitz dependence of the attainable set on the initial set with respect to the Hausdorff distance, together with a one-sided estimate quantifying the effect of the perturbations. We also work out a spatially distributed fishery model with ecological memory whose harvesting rule, triggered by the aggregate biomass, is one-sided Lipschitz but is not Lipschitz continuous with respect to the Hausdorff distance, so that the classical framework does not apply. For this model, our estimates produce a stability constant governed only by the biological data.
\end{abstract}
\medskip
\noindent\textbf{Keywords:}
Sweeping process; Volterra integro-differential inclusion;
one-sided Lipschitz condition; Filippov's theorem; prox-regular set.

\medskip
\noindent\textbf{Mathematics Subject Classification (2020):}
34A60, 45D05, 49J52, 49J53, 93B03.

\section{Introduction}
Let $I=[0,T]$ a nonempty closed interval and  $\H$ a separable Hilbert space. This paper is concerned with the stability of an evolution problem in which three different nonsmooth mechanisms act simultaneously:
\begin{equation}\label{Sweeping-Dif1}\small{
\left\{
\begin{aligned}
\dot{x}(t)&\in -N(C(t);x(t))+f_1(t,x(t))+\int_{0}^t f_2(t,s,x(s))\, ds +F(t,x(t)) & \textrm{a.e. }  t\in I,\\
x(0)&=x_0\in \mathcal{K}\subset C(0),
\end{aligned}
\right.}
\end{equation}
The unbounded normal-cone term $-N(C(t);x(t))$ confines the state to a moving set $C(t)$ that is merely uniformly $\rho$-prox-regular, and generates the impulsive velocities typical of unilateral dynamics; the integral term makes the velocity at time $t$ depend on the whole history of the trajectory through a kernel of \emph{Volterra type}, i.e., one depending on the current time $t$ as well as on the past time $s$; and the outer multivalued map $F$ carries the controls, the uncertainties, or the discontinuous feedback laws of the model. The question we answer is the sweeping-process analogue of the one settled by Filippov for differential inclusions: \emph{if a trajectory satisfies \eqref{Sweeping-Dif1} only up to a measurable defect, is there an exact trajectory nearby, and can the distance between the two be bounded explicitly in terms of the size of the defect and of the data of the problem alone?}

For differential inclusions, the affirmative answer is Filippov's celebrated theorem \cite{Filippov-1967_Classical}, which underlies most of the quantitative theory of reachable sets and of relaxation. Its driving hypothesis is a Lipschitz condition, in the state variable, on the right-hand side. That hypothesis is, however, badly suited to systems in which the multivalued term encodes a switching or threshold law, and this motivated the introduction by Donchev of the \emph{one-sided Lipschitz} (OSL) condition \cite{Donchev_T-1991-Functional,Donchev_T-2002}, which constrains only the inner product $\langle x-y,v-w\rangle$ and therefore tolerates arbitrarily large, even infinite, Hausdorff moduli, provided that the offending variation points in a dissipative direction. An important distinction with respect to the classical Filippov theorem is that, under the Lipschitz assumption, it provides estimates for both the trajectories and their velocities \cite{Filippov-1967_Classical}, while the Filippov-type result of Donchev and Farkhi \cite{Dontchev_Farkhi-1998}, obtained under the weaker OSL condition, provides a stability estimate for the trajectories but not for their velocities. Donchev and Farkhi showed that stability and Euler approximation persist under OSL \cite{Dontchev_Farkhi-1998}. They later revisited the Filippov--Pli\'{s} theorem under one-sided Lipschitz, one-sided Kamke, or continuity-type conditions \cite{Donchev_2009_Th_Apli}. In parallel, Donchev, R\'ios and Wolenski characterized strong invariance under OSL assumptions in finite dimensions \cite{Dontchev_T-Rios-Wolenski-2005}, in Hilbert spaces and for discontinuous dynamics \cite{Donchev_2006_Hilbert}, and for perturbed dissipative systems \cite{Dontchev_T-Rios-Wolenski-2004_C}, thereby extending the characterization of \cite{RW2003}; ramifications in Hamilton--Jacobi theory, concerning approximate semi-solutions and the minimal time function, appear in \cite{Dontchev_T-Rios-Wolenski-2007}. It should be stressed that the OSL condition used here (Definition~\ref{def:OSL}) is the weakest of the conditions bearing that name: it requires that for \emph{each} $v\in F(t,x)$ \emph{some} $w\in F(t,y)$ realize the inequality,  as in \cite{Dontchev_Farkhi-1998,Donchev_T-Farkhi_E-1999-Approximations} whereas the \emph{uniform} one-sided Lipschitz condition of \cite{Filippov-1960,Filippov-1988,Dontchev_A-Lempio_F-1992,Kastner-1991}. Under OSL conditions, the successive approximations used in Filippov’s original proof may fail to converge, as pointed out in \cite{Donchev_2009_Th_Apli} and illustrated by the classical counterexample of Lakshmikantham and Leela \cite{Lakshmikantham-1981}. Instead, our argument uses existence results for the associated differential inclusions under suitable compactness assumptions and provides estimates for the trajectories, rather than for their velocities. 

On the constrained side, the sweeping process was introduced by Moreau in the early seventies \cite{MO1,MO2,MO4} to model quasi-static elastoplasticity, and has become a standard tool in unilateral mechanics, in nonsmooth dynamics and in their numerical treatment \cite{Moreau1999}. We refer to \cite{MR755330,MR1189795} for the general theory of differential inclusions and to \cite{Vilches2024,JV-alpha,JV-regular,JV-Galerkin,MR3956966,MR3813128,MR4421900} for existence and regularization results covering several perturbed and state-dependent variants. Integro-differential sweeping processes, in which memory is added to the dynamics, have been intensively studied over the last few years. First, Colombo and Kozaily \cite{MR4099068} established existence and uniqueness for an integral perturbation of Moreau's process. Then Bouach, Haddad and Thibault \cite{bouach2021nonconvex} treated the nonconvex case with applications. The corresponding optimal control and discretization problems being addressed in \cite{bouach2021optimal,Haddad2022}. Then,  Gaouir, Haddad and Thibault investigated the prox-regular setting \cite{HaddadGaouirThibault2024} and its \emph{Lipschitz} multivalued perturbations \cite{HaddadGaouirThibault2025}. The well-posedness of the Volterra-type variant considered here was obtained in \cite{Vilches2024}, and a Galerkin-like method for integro-differential inclusions, with applications to Volterra sweeping processes, was developed in \cite{Pedro-Manuel-Emilio-2024}. Integral dynamics of a related nature also arises in the theory of sticky particles \cite{MR3039208}.

This body of work leaves a precise gap. For integro-differential sweeping processes carrying a multivalued perturbation,  the available framework in \cite{HaddadGaouirThibault2025} requires $F(t,\cdot)$ to be Lipschitz continuous with respect to the Hausdorff distance, and the reason is structural: Hausdorff-Lipschitz continuity allows one to choose, at each step of the successive-approximation scheme, a measurable selection whose distance from the preceding one is controlled by the distance between the corresponding states. Under the OSL condition alone, no analogous norm estimate between matching selections is available. In the OSL setting considered here, the presence of the Volterra term leads to quadratic estimates involving nonlocal terms that are not directly covered by the classical Grönwall inequality. The present paper closes this gap. Its message can be summarized as follows: \emph{the entire Filippov theory of Volterra sweeping processes survives the replacement of Hausdorff-Lipschitz continuity by the one-sided Lipschitz condition, with fully explicit constants, and the gain is genuine rather than cosmetic.} OSL assumption (see \ref{HOSL} below) is strictly weaker than Hausdorff-Lipschitz continuity (Remark~\ref{rem:OSL-normalization}(ii)), and Section~\ref{subsec:fishery} exhibits a natural infinite-dimensional model, namely a spatially distributed fishery with ecological memory and a threshold harvest-control rule, in which $F$ is OSL with constant \emph{zero} while failing to be Hausdorff-Lipschitz on \emph{every} ball, so that the framework of \cite{HaddadGaouirThibault2025} does not apply at all, whereas ours yields a stability constant governed exclusively by the biological data.

Our main result, Theorem~\ref{Filippov_T}, may be stated informally as follows. Let $y(\cdot)$ be an absolutely continuous solution of the doubly perturbed inclusion obtained from \eqref{Sweeping-Dif1} by perturbing the state argument of $F$ by an essentially bounded \emph{inner} perturbation $h$ and by adding an \emph{outer} perturbation $g(t)\mathbb{B}$ with $g$ integrable. Then, for every  compact set $\mathcal{K}\subset C(0)$ and every $x_{0}\in \operatorname{Proj}_{\mathcal{K}}(y(0))$, there exists a genuine solution $x(\cdot)$ of \eqref{Sweeping-Dif1} issued from $x_{0}$ such that $\Vert x(t)-y(t)\Vert$ is bounded, for every $t\in I$, by an expression built only from $\operatorname{dist}(y(0),\mathcal{K})$, from $\Vert h\Vert_{\infty}$ and $\Vert g\Vert_{L^1}$, and from the data through the one-sided Lipschitz modulus $L_{F}^{\overline{R}}$ of $F$, the local Lipschitz moduli $l^{1}_{\overline{R}}$ and $l^{2}_{\overline{R}}$ of $f_1$ and $f_2$, the Lipschitz constant $L_{C}$ of the moving set and its prox-regularity constant $\rho$. No unspecified constant enters the estimate. When the inner perturbation is switched off, the bound sharpens from a quadratic to a linear regime, and in the unperturbed case $\mathcal{K}=\{x_0\}$, $h\equiv 0$, $g\equiv 0$ it collapses to
\begin{align*}
\Vert x(t)-y(t)\Vert &\leq \Vert x_{0}-y_{0}\Vert \exp\left(\int_0^t \eta(s)\, ds\right),\\
\textrm{ where } \qquad \eta(s)&:=L_{F}^{\overline{R}}(s)+l^{1}_{\overline{R}}(s)+\frac{L_{C}+\nu(s,0,0)}{\rho}+s\,l^{2}_{\overline{R}}(s),
\end{align*}
which is exactly the Lipschitz dependence on the initial condition of Filippov's classical theorem. The exponent is transparent: each datum contributes its own modulus, the memory kernel contributes the factor $t\,l^{2}_{\overline{R}}(t)$ produced by the Volterra structure, and the nonconvexity of the moving set contributes the term $(L_{C}+\nu)/\rho$, which is proportional to the a priori velocity bound and vanishes identically when the sets $C(t)$ are convex, i.e., when $\rho=+\infty$. To the best of our knowledge, this is the first Filippov-type theorem for integro-differential sweeping processes of Volterra type under a one-sided Lipschitz condition on the multivalued perturbation.

The contributions of the paper are the following.\\
(C1)  \textit{A Filippov-type stability theorem under OSL} (Theorem~\ref{Filippov_T}). The Lipschitz continuity of $F(t,\cdot)$ for the Hausdorff distance is relaxed to \ref{HOSL}. The result is moreover stated with two simultaneous perturbations, inner and outer, with set-valued initial data, and in two regimes: a quadratic estimate in the general case and a sharper linear one when $h\equiv 0$, the former reducing exactly to the square of the latter when both perturbations vanish. Existence of the comparison trajectory is part of the conclusion, and follows from either of two alternative compactness hypotheses, one on the moving set \ref{HC3} and one on the perturbation \ref{H4F}. Neither is redundant: in the fishery model of Section~\ref{subsec:fishery} the former fails in infinite dimensions, while the latter holds.\\
(C2) \textit{An enhanced Gr\"onwall inequality} (Proposition~\ref{Gronwall}), covering differential inequalities in which a square-root nonlinearity and Volterra memory occur together, including the product of a local term $\sqrt{\beta(t)}$ with a nonlocal one $\int_{0}^{t}\sqrt{\beta}(s)ds$. This is the analytic core of the paper, and its form is dictated by the problem, as explained below.\\
(C3) \textit{Applications} (Section~\ref{section-appl}). The attainable set $A(t,\mathcal{K})$ depends on $\mathcal{K}$ in a Lipschitz way for the Hausdorff distance, with a modulus that is uniform over families of compact subsets of $C(0)$ contained in a fixed ball (Corollary~\ref{cor:attainable}); and a one-sided estimate quantifies the drift of the attainable set of the perturbed dynamics (Remark~\ref{rem:perturbed-attainable}), the excess being, as we explain there, the only quantity that can legitimately be estimated in this direction. Finally, Proposition~\ref{prop:fishery} works out the fishery model in full, and shows that the resulting stability constant $\exp\bigl(\int_{0}^{t}(q(s)+s\,\kappa(s))\,ds\bigr)$ depends only on the recruitment and mortality rates and on the memory kernel --- and not on the admissible fishing-mortality range, on the management set, or on the threshold rule. It is thus strictly better than the sharpest Hausdorff modulus available even in those degenerate cases where the Lipschitz framework does apply.\\

It may be worth explaining why the passage from the Lipschitz to the one-sided Lipschitz setting is not a routine adaptation. Three obstructions have to be removed. First, the OSL condition bears on inner products, hence it controls the \emph{squared} distance $\varphi=\frac12\Vert x-y\Vert^{2}$, whereas the perturbations $h$ and $g$ enter linearly in $\Vert x-y\Vert$. Differentiating $\varphi$ therefore produces a differential inequality in which $\varphi$, $\sqrt{\varphi}$ and a constant appear at the same time. Second, the Volterra kernel introduces memory terms in a particularly delicate form, multiplying the local factor $\sqrt{\varphi(t)}$ by the nonlocal term $\int_{0}^{t}\sqrt{\varphi}$. Neither the classical Gr\"onwall inequality nor its usual Volterra variants accommodate this combination, which is precisely what Proposition~\ref{Gronwall} is designed to handle. Third,  we construct an auxiliary multifunction $S$ by intersecting $F$, evaluated at a suitably truncated argument because the OSL modulus is only local, with the half-space determined by the OSL inequality relative to the given trajectory $y$. We then verify that $S$ inherits from $F$ its measurability, the weak closedness of its graph, its linear growth, and its measure-of-noncompactness estimate. Applying an existence theorem to $S$ yields a solution of \eqref{Sweeping-Dif1} that is, by construction, OSL-matched to $y$ at almost every time. A fourth, more subtle point is that $x$ and $y$ satisfy inclusions with \emph{different} a priori velocity bounds while being swept by the \emph{same} moving set.   We handle this by applying a reduction result (see Proposition \ref{Main_Result_Red}) with a coefficient that dominates both bounds. This is the role of the free parameter $r_{0}$ and the comparison property $\nu(t,0,0)\leq \nu(\cdot,h,g)$.

The paper is organized as follows. Sections~\ref{Math_Prel} and \ref{preliminary_L} collect the material from nonsmooth analysis and the theory of measurable multifunctions needed later, including a measurability lemma, an enhanced Gr\"onwall inequality, and a discussion of the OSL property together with four classes of examples: parametrized dynamics, subdifferentials of weakly convex functions, monotone perturbations, and normal-cone perturbations over prox-regular sets. Section~\ref{hipo-sol} states the standing assumptions on the data. Section~\ref{Sweeping-sec} establishes the reduction theorem and the corresponding a priori bounds. Section~\ref{Main_result} contains the main result. Section~\ref{section-appl} develops the applications, namely the Hausdorff continuity of attainable sets in Subsection~\ref{A_Opt_Cont} and the fishery model in Subsection~\ref{subsec:fishery}. Section~\ref{Concl} concludes with several perspectives, in particular on strong invariance and minimal time problems, as well as on weak asymptotic stability.

\section{Mathematical Preliminaries}\label{Math_Prel}
From now on, $\H$ stands for a separable Hilbert space, whose inner product is denoted by $\langle\cdot,\cdot\rangle$ and whose norm is denoted by $\Vert \cdot \Vert$. The closed unit ball is denoted by $\mathbb{B}$. The notation $\H_w$ stands for $\H$ equipped with the weak topology, and $x_n \rightharpoonup x$ denotes the weak convergence of a sequence $(x_n)_n$ to $x$. Throughout the paper, $T>0$ is fixed and $I:=[0,T]$. We denote by $L^1\left(I;\H\right)$ the space of $\H$-valued (Bochner) Lebesgue integrable functions defined over $I$.  Moreover, we say that $u$ is absolutely continuous if there exist $f\in L^1\left(I;\H\right)$ and $u_0\in \H$ such that $u(t)=u_0+\int_{0}^t f(s)\, ds$ for all $t\in I$.

The interval $I$ is endowed with the $\sigma$-field $\mathcal{I}$ of its Lebesgue measurable subsets, which is complete with respect to the ($\sigma$-finite) Lebesgue measure, and $\mathcal{B}(\H)$ denotes the Borel $\sigma$-field of $\H$. Measurability on $I$ is always understood with respect to $\mathcal{I}$.

Let $S$ be a nonempty closed subset of $\H$ and let $x\in S$. A vector $h\in \H$ belongs to the \emph{Clarke tangent cone} $T(S;x)$ when for every sequence $(x_n)_n$ in $S$ converging to $x$ and every sequence of positive numbers $(t_n)_n$ converging to $0$, there exists some sequence $(h_n)_n$ in $\H$ converging to $h$ such that $x_n+t_nh_n\in S$ for all $n\in \mathbb{N}$. This cone is closed and convex and its negative polar $N(S;x)$ is the \emph{Clarke normal cone} to $S$ at $x\in S$, that is,
\begin{equation*}
N\left(S;x\right):=\left\{v\in \H\colon \left\langle v,h\right\rangle \leq 0 \quad  \forall h\in T(S;x)\right\}.
\end{equation*}
As usual, $N(S;x)=\emptyset$ if $x\notin S$. In what follows, we write indistinctly $N(S;x)$ or $N_{S}(x)$ for this cone. Through that normal cone, the Clarke subdifferential of a lower semicontinuous extended-real-valued function $f\colon \H\to \mathbb{R}\cup\{+\infty\}$ is defined, at any point $x\in \H$ where $f(x)$ is finite, by
\begin{equation*}
\partial f(x):=\left\{v\in \H\colon (v,-1)\in N\left(\operatorname{epi}f,(x,f(x))\right)\right\},
\end{equation*}
where $\operatorname{epi}f:=\left\{(y,r)\in \H\times \mathbb{R}\colon f(y)\leq r\right\}$ is the epigraph of $f$. When the function $f$ is finite and locally Lipschitzian around $x$, the Clarke subdifferential is characterized (see, e.g., \cite{Clarke1998}) in the following simple and amenable way
\begin{equation}\label{eq:clarke-support}
\partial f(x)=\left\{v\in \H\colon \left\langle v,h\right\rangle \leq f^{\circ}(x;h) \textrm{ for all } h\in \H\right\},
\end{equation}
where
\begin{equation*}
f^{\circ}(x;h):=\limsup_{(t,y)\to (0^+,x)}t^{-1}\left[f(y+th)-f(y)\right]
\end{equation*}
is the \emph{generalized directional derivative} of the locally Lipschitzian function $f$ at $x$ in the direction $h\in \H$.  The function $f^{\circ}(x;\cdot)$ is in fact the support function of $\partial f(x)$, which is a nonempty, convex and weakly compact subset of $\kappa\mathbb{B}$ whenever $f$ is Lipschitz with constant $\kappa$ near $x$. That characterization easily yields that the Clarke subdifferential of any locally Lipschitzian function has the important property of upper semicontinuity from $\H$ into $\H_w$.

For $x\in \H$ and a nonempty set $S\subset \H$, the distance function to the set $S$ at $x\in \H$ is defined by $d_{S}(x):=\inf_{y\in S}\Vert x-y\Vert$. We denote $\operatorname{Proj}_{S}(x)$ the set (possibly empty)
\begin{equation*}
\operatorname{Proj}_{S}(x):=\left\{y\in S\colon d_{S}(x)=\Vert x-y\Vert\right\}.
\end{equation*}
Whenever $\operatorname{Proj}_{S}(x)$ is a singleton, we denote by $\operatorname{proj}_{S}(x)$ its unique element. This is the only situation in which the lower-case notation is used in the paper.
In the sequel, we use indistinctly the notations $d_{S}(\cdot)$ and $d(\cdot,S)$ and, as usual, it will be convenient to write $\partial d(x,S)$ in place of $\partial d\left(\cdot,S\right)(x)$. A vector $v\in \H$
is a proximal normal vector to  $S$ of $\H$ at $x\in S$ if there exist $\sigma\geq 0$ such that
\begin{equation*}
  \langle v,x'-x \rangle \leq \sigma \|x'-x\|^2 \quad\text{for all }x'\in  S.
\end{equation*}
The set of such vectors is the proximal normal cone $N^P(S,x)$ to $S$ at $x$.
The proximal normal cone enjoys a geometrical characterization (see, e.g., \cite{Clarke1998}):
\begin{equation}\label{eq2.5}
N^{P}(S,x)=\{v \in \H:\;\exists \rho>0\text{ s.t. }x\in \operatorname{Proj}_{S}(x+\rho v )\}.
\end{equation}
Moreover, the neighborhood $U$ in the above definition can be dispensed with at the cost of possibly enlarging the constant (see, e.g., \cite{Clarke1998}), which gives the following variational characterization:
$$
N^{P}(S, x) = \left\lbrace v\in \H\colon \, \exists \sigma \geq 0 \textrm{ such that } \langle v, x'-x\rangle \leq \sigma \Vert x' - x\Vert^2 \,   \forall x'\in S\right\rbrace.
$$
A subdifferential is associated with the proximal normal cone through the epigraph of functions. More precisely, for a lower semicontinuous function $f\colon \H \to \mathbb{R}\cup\{+\infty\}$ and $x\in \H$, with $f(x)<+\infty$, the proximal subdifferential of $f$ at $x$ is
the set
 $$
 \partial^{P}f(x) = \{ v\in \H\colon  (v, -1)\in N^{P}(\operatorname{epi} f, (x, f(x)))\}.
 $$
The uniformity of the positive constant $\rho$ in (\ref{eq2.5}) for the unit proximal normal
vectors to $S$ leads to the concept of uniformly prox-regular sets. For a given $\rho \in ]0,+\infty ]$
the closed subset $S$ is $\rho$-uniformly prox-regular (see, e.g., \cite{P_R_T-2000}) if every unit proximal normal vector to $S$
can be realized by a $\rho$-ball, which means that for all $x\in S$ and all
$ v \in N^{P}(S,x)$ one has
\begin{equation}\label{eq:prox-reg-def}
  \left\langle v ,x'-x \right\rangle
  \leq \frac{\left\Vert v \right\Vert}{2\rho }\left\Vert x'-x\right\Vert ^{2} \textrm{ for all }  x'\in S.
\end{equation}
We make the convention $\frac{1}{\rho }=0$ for
$\rho =+\infty .$ Recall that for $\rho =+\infty $ the uniform $\rho$-prox-regularity
of the closed set $S$ is equivalent to the convexity of the set.
The following proposition provides some properties of the proximal and Clarke subdifferentials
of the distance function $d(\cdot,S)$ when the set $S$ is prox-regular.
It also summarizes some important consequences of the
uniform prox-regularity which will be needed in the paper. For the proofs of these
results we refer the reader to the paper \cite{P_R_T-2000}.

\begin{proposition}\label{Prop2.1}
Let $S\subset \H$ be closed and let $\rho\in ]0,\infty]$. Then the following are equivalent:
\begin{enumerate}[label=(\alph*)]
\item The set $S$ is uniformly $\rho$-prox-regular.
\item For any  $x'\in U_{\rho}(S):=\{x\in \H:\;d(x,S)<\rho\}$ the set
$\operatorname{Proj}_{S}(x')$ is a singleton and the mapping $\operatorname{proj}_{S}(\cdot)$ is continuous on $U_{\rho}(S)$.
\item The proximal subdifferential of $d(\cdot ,S)$ coincides with its Clarke subdifferential at all points $x\in U_{\rho}(S)$.
\item The  function $d(\cdot,S)$ is continuously Fr\'echet differentiable
on $U_{\rho}(S)\setminus S$, with 
$\nabla d(\cdot,S)(x)=\dfrac{x-\operatorname{proj}_{S}(x)}{d(x,S)}$ for every $x\in U_{\rho}(S)\setminus S$. 
\item The set-valued mapping  $x\rightrightarrows N^P(S, x)\cap\mathbb{B}$ is $1/\rho$-hypomonotone, that is,   for all $x_i\in S$ and all $v_i\in N^{P}(S,x_i)\cap \mathbb{B}$, $i=1,2$, one has
\begin{equation*}
\left\langle v_{1}-v_{2},x_{1}-x_{2}\right\rangle \geq -\frac{1}{\rho}\left\Vert
x_{1}-x_{2}\right\Vert ^{2}.
\end{equation*}
\end{enumerate}
\end{proposition}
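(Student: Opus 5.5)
The plan is to prove the equivalences through a cycle anchored at (a), taking the metric projection onto $S$ as the central object and working entirely in the Hilbert structure. We argue (a)$\Rightarrow$(e), (e)$\Rightarrow$(a), (a)$\Rightarrow$(b)$\Rightarrow$(d)$\Rightarrow$(c)$\Rightarrow$(a).

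\emph{Step 1: (a)$\Leftrightarrow$(e).} For the forward direction, apply \eqref{eq:prox-reg-def} at $x_1$ with $x'=x_2$ and at $x_2$ with $x'=x_1$, for $v_i\in N^P(S,x_i)\cap\mathbb{B}$. Adding the two inequalities gives $\langle v_1-v_2,x_1-x_2\rangle\ge -\frac{1}{\rho}\|x_1-x_2\|^2$. For the converse, fix $x_1\in S$ and a unit proximal normal $v_1$. Test the hypomonotonicity against $v_2=0\in N^P(S,x_2)$ for arbitrary $x_2\in S$. This yields the weaker constant $\frac{1}{\rho}$ instead of $\frac{1}{2\rho}$. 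Recovering the sharp constant requires a localization argument: run the inequality along the segment generated by the projection of $x_1+tv_1$ and let $t\to\rho$. I expect this to be delicate, and I would follow \cite{P_R_T-2000} here.

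\emph{Step 2: (a)$\Rightarrow$(b).} Let $d(x,S)<\rho$ and let $p,q$ be nearest points of $x$ in $S$. Then $x-p\in N^P(S,p)$ and $x-q\in N^P(S,q)$. Applying \eqref{eq:prox-reg-def} to both normals and adding gives $\|p-q\|^2\le \frac{d(x,S)}{\rho}\|p-q\|^2$, hence $p=q$.

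\emph{Existence of a nearest point} is the main obstacle, since $S$ need not be weakly closed. I would take a minimizing sequence $p_n$ and use the parallelogram law together with the $\rho$-ball estimate. Alternatively, I would use density of points admitting projections (Lau/Edelstein type results), combined with the Lipschitz-type estimate $\|p(x)-p(y)\|\le \frac{\rho}{\rho-r}\|x-y\|$, valid on $\{d<r\}$ for those points. That estimate also follows from adding the two prox-regular inequalities. A Cauchy argument then extends the projection to the whole tube, and the same estimate gives continuity.

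\emph{Step 3: (b)$\Rightarrow$(d)$\Rightarrow$(c)$\Rightarrow$(a).} For (d), use that $d^2(\cdot,S)=\|\cdot\|^2-\sup_{s\in S}(2\langle \cdot,s\rangle-\|s\|^2)$. This is a difference of a smooth function and a convex function whose subdifferential reduces to $2\operatorname{proj}_S(x)$ when the projection is single-valued and continuous. Hence $d^2$ is $C^1$ with gradient $2(x-\operatorname{proj}_S x)$, and dividing by $2d$ gives the stated formula off $S$. For (c): off $S$, (d) gives that both subdifferentials equal $\{\nabla d\}$. On $S$, the Clarke subdifferential is $N(S;x)\cap\mathbb{B}$, and by continuity of the gradient it is generated by limits of proximal normals $\frac{x_n-\operatorname{proj}x_n}{d}$. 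Each such limit is proximal with uniform constant, so it coincides with the proximal subdifferential. Finally, (c)$\Rightarrow$(a): every unit proximal normal $v$ at $x$ lies in the proximal subdifferential of $d$ on the tube. The equality of subdifferentials lets us move along $x+tv$ for all $t<\rho$ while keeping $x$ as the projection, and this is exactly the $\rho$-ball realization \eqref{eq2.5} with uniform radius. Expanding $\|x'-(x+tv)\|^2\ge t^2$ and letting $t\uparrow\rho$ yields \eqref{eq:prox-reg-def}.
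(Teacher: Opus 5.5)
The paper does not prove this proposition itself; it refers to \cite{P_R_T-2000}. Your forward implications are sound. (a)$\Rightarrow$(e) follows by adding the two inequalities. In (a)$\Rightarrow$(b), uniqueness of nearest points and the bound $\Vert p(x)-p(y)\Vert\leq\frac{\rho}{\rho-r}\Vert x-y\Vert$ are correct, and existence follows from the density of points having a nearest point plus the Cauchy argument. Your first option, a minimizing sequence with the $\rho$-ball estimate, does not work: \eqref{eq:prox-reg-def} says nothing at points $p_n$ for which $x-p_n$ is not a proximal normal. (b)$\Rightarrow$(d) works through the convex function $\Vert\cdot\Vert^{2}-d_S^{2}$, whose continuous subgradient selection $2\operatorname{proj}_S$ forces Fr\'echet differentiability.

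The gap is that no implication leading back to (a) is actually established. You defer (e)$\Rightarrow$(a) to the reference. Note, though, that your Step~2 only uses the summed inequality, i.e.\ (e), so (e)$\Rightarrow$(b) comes for free. In (d)$\Rightarrow$(c), the claim that off $S$ ``both subdifferentials equal $\{\nabla d\}$'' does not follow from (d). Being $C^1$ gives $\partial d_S(x)=\{\nabla d_S(x)\}$, but a $C^1$ function can have empty proximal subdifferential, e.g.\ $t\mapsto-\vert t\vert^{3/2}$ at $0$. The quadratic lower bound you need comes from the Lipschitz estimate of Step~2, i.e.\ from (a). Likewise, at points of $S$, ``each such limit is proximal with uniform constant'' is exactly the $\rho$-ball property (a). So this link silently assumes (a), and your cycle collapses to (a)$\Rightarrow$(b),(c),(d),(e) together with (c)$\Rightarrow$(a).

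That last implication is where the essential idea is missing. From (c) you can honestly extract a lot. Since $\partial d_S(x)\neq\emptyset$, also $\partial^{P}d_S(x)\neq\emptyset$ for every $x\in U_\rho(S)\setminus S$. In a Hilbert space this forces $\operatorname{Proj}_S(x)$ to be a singleton and $\partial d_S(x)$ to equal $\{(x-\operatorname{proj}_S(x))/d_S(x)\}$, so (c) gives (b) and (d).

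To reach (a), however, you must show the following: for $x\in S$ and a unit $v\in N^{P}(S;x)$, the set $A:=\{t\in\,]0,\rho[\colon x\in\operatorname{Proj}_S(x+tv)\}$ is all of $]0,\rho[$. This set is a nonempty, relatively closed initial subinterval, so it is either $]0,\rho[$ or $]0,T]$ with $T<\rho$, and everything hinges on excluding the latter. Local information at $y:=x+Tv$ only yields $d_S(y+\epsilon v)=T+\epsilon+o(\epsilon)$. This holds whether you use differentiability of $d_S$ with $\nabla d_S(y)=v$, continuity of $\operatorname{proj}_S$, or equality of subdifferentials. It never gives the exact identity $d_S(y+\epsilon v)=T+\epsilon$ needed to go beyond $T$. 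Your sentence ``the equality of subdifferentials lets us move along $x+tv$ for all $t<\rho$'' asserts precisely this.

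A global continuation argument is needed here. In finite dimension, integral curves of $\nabla d_S$ in $U_\rho(S)\setminus S$ are unit-speed straight lines along which $d_S$ grows at rate one, and this settles it. In a general Hilbert space Peano's theorem is unavailable, so a different argument is required. Until this step is supplied, or explicitly taken from \cite{P_R_T-2000} as the paper does for the whole statement, your proposal proves only that (a) implies the other four conditions, not their equivalence.
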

Assertion (d) cannot be extended to the points of $S$: the function $d(\cdot,S)$ fails to be differentiable at any $x\in S$ admitting a nonzero normal vector. What survives at such points, and what is used in Section~\ref{Sweeping-sec}, is the differentiability of the \emph{squared} distance.
\begin{corollary}\label{cor:squared-distance}
Let $S\subset \H$ be closed and uniformly $\rho$-prox-regular for some $\rho\in ]0,+\infty]$. Then $\frac{1}{2}d^{2}(\cdot,S)$ is continuously differentiable on $U_{\rho}(S)$ and 
\begin{equation*}
\nabla\left(\tfrac{1}{2}d^{2}(\cdot,S)\right)(x)=x-\operatorname{proj}_{S}(x)\qquad \textrm{ for all } x\in U_{\rho}(S).
\end{equation*}
\end{corollary}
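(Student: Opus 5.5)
The plan is to split $U_{\rho}(S)$ into $U_{\rho}(S)\setminus S$ and $S$ itself, and treat the two regions by different arguments. On $U_{\rho}(S)\setminus S$, Proposition~\ref{Prop2.1}(d) says that $d(\cdot,S)$ is continuously Fr\'echet differentiable there, with gradient $(x-\operatorname{proj}_{S}(x))/d(x,S)$. The chain rule applied to $\tfrac12 d^2$ then gives
\[
\nabla\bigl(\tfrac12 d^2(\cdot,S)\bigr)(x)=d(x,S)\,\nabla d(\cdot,S)(x)=x-\operatorname{proj}_{S}(x).
\]
So the formula already holds on this region.

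At a point $x\in S$ the gradient of $d$ does not exist, so I will argue directly with the definition of the Fr\'echet derivative. Since $\operatorname{proj}_S(x)=x$, the claimed derivative there is $0$. It therefore suffices to note that $0\le \tfrac12 d^2(x+h,S)\le \tfrac12\Vert h\Vert^2$, because $x\in S$. This gives $\tfrac12 d^2(x+h,S)-\tfrac12 d^2(x,S)=o(\Vert h\Vert)$, so $\tfrac12 d^2(\cdot,S)$ is Fr\'echet differentiable at $x$ with gradient $0$. Combining the two regions, $\tfrac12 d^2(\cdot,S)$ is Fr\'echet differentiable at every point of $U_{\rho}(S)$, with gradient $x-\operatorname{proj}_S(x)$.

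It remains to prove continuity of the gradient. The map $x\mapsto \operatorname{proj}_S(x)$ is well defined (single-valued) and continuous on $U_{\rho}(S)$ by Proposition~\ref{Prop2.1}(b). Hence $x\mapsto x-\operatorname{proj}_S(x)$ is continuous on the whole of $U_{\rho}(S)$, and this is exactly the $C^1$ property we need.

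The only delicate step is at the points of $S$, where the distance function is not differentiable. The elementary squeeze $0\le d^2(x+h,S)\le\Vert h\Vert^2$ resolves this. I do not expect any genuine obstacle, because continuity across the boundary of $S$ comes entirely from Proposition~\ref{Prop2.1}(b). In the case $\rho=+\infty$ the same argument applies verbatim, with $U_{\rho}(S)=\H$.
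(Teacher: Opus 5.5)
Your proposal is correct and follows essentially the same route as the paper's proof. You use the chain rule with Proposition~\ref{Prop2.1}(d) off $S$, the squeeze $0\le \tfrac12 d^2_S(x+u)\le \tfrac12\Vert u\Vert^2$ at points of $S$, and continuity of $\operatorname{proj}_S$ from Proposition~\ref{Prop2.1}(b) to get the $C^1$ property.
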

\begin{proof}
On $U_{\rho}(S)\setminus S$ the identity follows from Proposition~\ref{Prop2.1}(d) and the chain rule. If $x\in S$, then $\operatorname{proj}_{S}(x)=x$ and, for every $u\in \H$, $0\leq \frac12 d^{2}_{S}(x+u)\leq \frac12\Vert u\Vert^{2}=o(\Vert u\Vert)$, so that $\frac12 d^{2}_{S}$ is Fr\'echet differentiable at $x$ with zero gradient, which is again $x-\operatorname{proj}_{S}(x)$. Finally, the gradient is continuous on $U_{\rho}(S)$ by Proposition~\ref{Prop2.1}(b).
\end{proof}
\noindent Let $S$ be a closed subset of $\H$. Then, for every $x\in S$, $\partial^{P}d_{S}(x)=N^{P}(S;x)\cap \mathbb{B}$.  If, in addition, $S$ is uniformly $\rho$-prox-regular for some $\rho\in ]0,+\infty]$, then, for every $x\in S$,
\begin{equation}\label{lem:trunc}
\partial d_{S}(x)=\partial^{P}d_{S}(x)=N^{P}(S;x)\cap \mathbb{B}=N(S;x)\cap \mathbb{B}.
\end{equation}
In particular, $N^{P}(S;x)=N(S;x)$ for every $x\in S$.

\section{Preliminary Lemmas}\label{preliminary_L}

Throughout, $\mathcal I$ denotes the Lebesgue $\sigma$-algebra on $I=[0,T]$, and $\mathcal B(\H)$ denotes the Borel $\sigma$-algebra of $\H$. The following result provides sufficient conditions for the existence of a measurable minimum-norm selection of the map $t\mapsto F(t,u(t))$ (see, e.g., \cite[Lemma~3.1]{Pedro-Manuel-Emilio-2024}).
\begin{lemma}\label{Lemma_exis}
     Let $F\colon I \times \H\rightrightarrows \H$  be a set-valued map with nonempty, closed and convex values satisfying
\begin{itemize}
        \item The set-valued map $t\rightrightarrows \gph F(t,\cdot)$ is measurable, that is, for every (norm) open set $U\subset \H \times \H$, the set $\{ t \in I\colon \gph F(t,\cdot)\cap U\neq \emptyset \}$ belongs to $\mathcal{I}$.
        \item For a.e. $t\in I$, $\gph F(t,\cdot)$ is closed in $\mathcal{H}\times \mathcal{H}_w$.
\end{itemize}
Then, for any $ u\colon I \to \H$ measurable, the map $M(t):= F(t,u(t)) $ is measurable.  Moreover, $M$ admits a measurable selection $v\colon I \to \H$ such that $d\left(0,F(t,u(t) )\right) =\| v(t)\|$ for a.e. $t\in I$.
\end{lemma}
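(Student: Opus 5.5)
The plan has two parts. First, show that $M(t)=F(t,u(t))$ is measurable. Second, extract a measurable minimum-norm selection, using the fact that $M(t)$ is closed and convex in a Hilbert space.

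\emph{Measurability of $M$.} I will approximate $u$ by countably-valued measurable functions. Since $\H$ is separable and $u$ is measurable, there are simple (countably-valued) measurable maps $u_n$ with $u_n(t)\to u(t)$ for every $t$. For a fixed $x\in\H$, the map $t\mapsto F(t,x)$ is measurable. Indeed, $\gph F(t,\cdot)\cap(\{x\}\times\H)$ is the intersection of a measurable closed-valued multifunction with a constant closed set. Measurability of this intersection follows by the standard Castaing-representation argument, writing $\{x\}\times \H=\bigcap_k (B(x,1/k)\times\H)$ and using the completeness of $\mathcal I$ (the projection/measurable-graph theorem). Consequently each $t\mapsto F(t,u_n(t))$ is measurable.

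To pass to the limit I plan to use the graph closedness in $\H\times\H_w$. I will show that
\[
F(t,u(t))=\bigcap_{k}\overline{\mathrm{co}}\bigcup_{n\ge k}F(t,u_n(t))
\]
holds, at least with the inclusion $\supseteq$ relying on closedness in $\H\times\H_w$. This identity is delicate without local boundedness, so an alternative is preferable. Namely, I would consider the measurable graph multifunction $t\tto \gph F(t,\cdot)$ and characterize $\gph M$ as
\[
\{(t,v): (u(t),v)\in \gph F(t,\cdot)\},
\]
which belongs to $\mathcal I\otimes\mathcal B(\H)$. This holds because $\gph F(t,\cdot)$ has closed values, so $(t,z)\mapsto d((u(t),v),\gph F(t,\cdot))$ is Carathéodory and hence jointly measurable. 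Since $\mathcal I$ is complete, a closed-valued multifunction with measurable graph is measurable (Aumann/Castaing). This route is cleaner, and I will adopt it. Note that closedness in $\H\times\H_w$ implies norm closedness of the graph, so $M(t)$ is closed; convexity is assumed.

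\emph{Minimal selection.} The function $t\mapsto d(0,M(t))$ is measurable because $M$ is measurable with closed nonempty values. Since $M(t)$ is nonempty, closed and convex in a Hilbert space, the projection $v(t):=\operatorname{proj}_{M(t)}(0)$ exists and is unique. To get measurability of $v$, I will write
\[
\{v(t)\}=M(t)\cap \bigl(d(0,M(t))\,\mathbb{B}\bigr).
\]
This is the intersection of two measurable closed-valued multifunctions, one of which has compact (in fact singleton-producing) behavior. An easier argument is to apply Kuratowski--Ryll-Nardzewski to the multifunction $t\tto M(t)\cap \overline{B}(0,d(0,M(t))+\varepsilon)$, but uniqueness makes direct measurability available. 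Alternatively, $v(t)=\lim_n \operatorname{proj}$ onto the nested sets obtained from a Castaing representation. The main obstacle is the measurability of $M$, specifically justifying the joint measurability of $(t,v)\mapsto d((u(t),v),\gph F(t,\cdot))$. I would handle this via a Castaing representation $(g_j,w_j)$ of $\gph F(t,\cdot)$, so that the distance equals $\inf_j\|(u(t),v)-(g_j(t),w_j(t))\|$, which is a countable infimum of Carathéodory functions.
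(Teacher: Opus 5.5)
Your argument is correct. The paper gives no proof of its own here; it simply cites \cite[Lemma~3.1]{Pedro-Manuel-Emilio-2024}. The route you settle on is the standard one, and it is the same mechanism the paper uses in Lemma~\ref{lem:meas-subdiff} and in Claim~1(ii) of the proof of Theorem~\ref{Filippov_T}. It runs as follows:
\begin{itemize}
\item identify $\gph M$ with $\{(t,v)\colon d((u(t),v),\gph F(t,\cdot))=0\}$;
\item observe that $(t,z,v)\mapsto d((z,v),\gph F(t,\cdot))$ is Carath\'eodory, hence $\mathcal{I}\otimes\mathcal{B}(\H\times\H)$-measurable;
\item compose with $t\mapsto u(t)$;
\item invoke the projection theorem \cite[Theorem~III.30]{Castaing_Valadier-1977} on the complete $\sigma$-field $\mathcal{I}$.
\end{itemize}

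A few points should be tightened.

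\emph{Drop the first attempt.} Discard the approximation by countably-valued $u_n$ and the $\overline{\mathrm{co}}$ identity entirely; they are not needed.

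\emph{Measurability of the projection.} Your justification for the measurability of $v(t)=\operatorname{proj}_{M(t)}(0)$ via ``compact behavior'' is wrong as stated. The ball $d(0,M(t))\mathbb{B}$ is not compact when $\dim\H=+\infty$. The correct reason is the graph argument you already used. The set $\{(t,x)\colon \Vert x\Vert\leq d(0,M(t))\}$ is $\mathcal{I}\otimes\mathcal{B}(\H)$-measurable, because $t\mapsto d(0,M(t))$ is measurable. Since $M(t)$ is closed and convex, $M(t)\cap d(0,M(t))\mathbb{B}=\{v(t)\}$. Hence $\gph v$ is the intersection of two measurable sets, and the projection theorem gives $\{t\colon v(t)\in U\}\in\mathcal{I}$ for every open $U$. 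The $\varepsilon$-enlarged Kuratowski--Ryll-Nardzewski alternative only produces approximate minimizers, so it should be dropped.

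\emph{The exceptional null set.} Closedness of $\gph F(t,\cdot)$ holds only off a null set $N$. The identification of $\gph M$ with the zero set of the distance is therefore valid only on $(I\setminus N)\times\H$. This is harmless: every subset of $N$ belongs to $\mathcal{I}$, and $M(t)$ is closed for every $t$ because $F$ has closed values.

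\emph{Weak closedness is not needed.} Your proof uses only the norm-closedness of $\gph F(t,\cdot)$, together with the closed convex values. The closedness in $\H\times\H_w$ is stronger than what this lemma requires.
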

The following result may be obtained from \cite[Theorems~III.9 and~III.30]{Castaing_Valadier-1977}.
\begin{proposition}\label{meassum}
Let  $F_1\colon  I \rightrightarrows \H$ be a ${\cal I}$-measurable set-valued map
with nonempty closed values and $F_2\colon  I \rightrightarrows \H$  be a set-valued map  with nonempty
values whose graph $\gph(F_2)$  is ${\cal I}\otimes {\cal B}(\H)$-measurable. Then, for any ${\cal I}$-measurable
selection $f$ of $F_1+F_2$  there exist ${\cal I}$-measurable selections $f_1$ and $f_2$ of
$F_1$ and $F_2$  respectively, such that $f(t)=f_1(t)+f_2(t)$ for all $t\in I$.
\end{proposition}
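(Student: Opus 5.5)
The plan is to reduce the decomposition problem to a single measurable selection problem for an auxiliary multifunction with measurable graph, and then to invoke the measurable selection theorem for such multifunctions (Castaing--Valadier, Theorem~III.22), which is available here because the $\sigma$-field $\mathcal I$ is complete and $\H$ is a Polish space, being separable. The auxiliary map is
\begin{equation*}
\Phi(t):=F_1(t)\cap\bigl(f(t)-F_2(t)\bigr),\qquad t\in I.
\end{equation*}
Since $f(t)\in F_1(t)+F_2(t)$, we can write $f(t)=a+b$ with $a\in F_1(t)$ and $b\in F_2(t)$, so $a\in\Phi(t)$ and $\Phi(t)\neq\emptyset$ for every $t\in I$. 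If the inclusion $f(t)\in F_1(t)+F_2(t)$ holds only almost everywhere, we first redefine $f$ on a null set, which does not affect measurability thanks to completeness. Once a measurable selection $f_1$ of $\Phi$ is found, we set $f_2:=f-f_1$. This map is $\mathcal I$-measurable, satisfies $f_2(t)\in F_2(t)$ for every $t$ by construction, and gives $f=f_1+f_2$.

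The key step is to show that $\gph\Phi$ belongs to $\mathcal I\otimes\mathcal B(\H)$, and we treat the two pieces of the intersection separately.

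\emph{The graph of $F_1$.} Since $F_1$ is measurable with nonempty closed values, the function $(t,x)\mapsto d(x,F_1(t))$ is measurable in $t$ and $1$-Lipschitz in $x$, hence Carath\'eodory. By Castaing--Valadier, Theorem~III.9, one can alternatively use a Castaing representation $(\sigma_n)_n$ of $F_1$, which gives $d(x,F_1(t))=\inf_n\Vert x-\sigma_n(t)\Vert$. Either way this function is $\mathcal I\otimes\mathcal B(\H)$-measurable, and therefore $\gph F_1=\{(t,x)\colon d(x,F_1(t))=0\}$ is measurable.

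\emph{The graph of $t\mapsto f(t)-F_2(t)$.} Consider the map $\psi(t,y):=(t,f(t)-y)$. It is $\mathcal I\otimes\mathcal B(\H)$-measurable, because $f$ is measurable and the subtraction is continuous. Moreover, $\gph(f-F_2)=\psi^{-1}(\gph F_2)$. Since $\gph F_2$ is measurable by assumption, this set is measurable as well.

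Intersecting the two measurable sets shows that $\gph\Phi$ is measurable.

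The main obstacle is the selection step itself. $\Phi$ need not have closed values, because $F_2$ is not assumed closed-valued, so the Kuratowski--Ryll-Nardzewski theorem is unavailable. We must therefore rely on the graph-measurable (von Neumann--Aumann) selection theorem. That theorem rests on the projection theorem, which states that the projection onto $I$ of a set in $\mathcal I\otimes\mathcal B(\H)$ is $\mathcal I$-measurable when $\mathcal I$ is complete and $\H$ is Polish; this is precisely Castaing--Valadier, Theorem~III.23, underlying Theorem~III.22. It yields an $\mathcal I$-measurable $f_1$ with $f_1(t)\in\Phi(t)$ for all $t\in I$, and the remaining conclusions follow as in the first paragraph.
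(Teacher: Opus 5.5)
Your proof is correct and is essentially the paper's route: the paper gives no argument beyond citing Castaing--Valadier, Theorems~III.9 and~III.30, and your construction is the standard way those two results combine. Theorem~III.9 gives the $\mathcal{I}\otimes\mathcal{B}(\H)$-measurability of $\gph F_1$. The graph-measurable selection/projection theorem over the complete $\sigma$-field $\mathcal{I}$ (which the paper invokes as III.30 and you as III.22--III.23) is then applied to $t\rightrightarrows F_1(t)\cap\bigl(f(t)-F_2(t)\bigr)$, followed by $f_2:=f-f_1$.
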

\noindent The next lemma provides the measurability of the multifunctions generated by the moving set along a measurable trajectory. It is used repeatedly in Sections~\ref{Sweeping-sec} and~\ref{Main_result} in order to apply Proposition~\ref{meassum}, and it does \emph{not} require the trajectory to be viable in the moving set.
\begin{lemma}\label{lem:meas-subdiff}
Let $u\colon I\to \H$ be measurable, and let  $C\colon I \rightrightarrows \H$ be a set-valued map with nonempty, closed and uniformly $\rho$-prox-regular values, for some $\rho\in ]0,+\infty]$. Assume that there exists $L_{C}\geq 0$ such that
\begin{equation*}
\operatorname{Haus}(C(t),C(s)):=\sup_{z\in \H}|d(z,C(t))-d(z,C(s))|\leq L_{C} |t-s|  \text{ for all } s,t\in I.
\end{equation*}
Then the following assertions hold.
\begin{enumerate}[label=\textnormal{(\roman*)}]
\item The set-valued map $t\rightrightarrows \partial d_{C(t)}(u(t))$ is measurable, with nonempty, convex and weakly compact values contained in $\mathbb{B}$.
\item If, in addition, $u(t)\in C(t)$ for a.e. $t\in I$, then the set-valued map $t\rightrightarrows N_{C(t)}(u(t))$ is measurable, with nonempty closed values.
\end{enumerate}
\end{lemma}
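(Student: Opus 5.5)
We prove (i) by passing through the distance function. Set $\varphi(t,z):=d_{C(t)}(z)$. The Hausdorff-Lipschitz hypothesis gives $|\varphi(t,z)-\varphi(s,z')|\le L_C|t-s|+\Vert z-z'\Vert$, so $\varphi$ is jointly continuous on $I\times\H$. The values of $t\rightrightarrows\partial d_{C(t)}(u(t))$ are nonempty, convex and weakly compact subsets of $\mathbb{B}$, because $d_{C(t)}$ is $1$-Lipschitz; this is the remark after \eqref{eq:clarke-support}.

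The plan is to show measurability through support functions. A multifunction with nonempty convex weakly compact values in a separable Hilbert space is measurable as soon as $t\mapsto\sigma(t,h):=\sup\{\langle v,h\rangle: v\in\partial d_{C(t)}(u(t))\}$ is measurable for every $h$ in a countable dense subset $D\subset\H$. This is the scalar measurability criterion of Castaing--Valadier \cite{Castaing_Valadier-1977}: for convex weakly compact values, measurability in $\H_w$ and in the norm sense coincide, because $\H$ is separable.

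By \eqref{eq:clarke-support}, $\sigma(t,h)=d_{C(t)}^{\circ}(u(t);h)$. We write this generalized directional derivative as a countable limsup. Since $d_{C(t)}$ is $1$-Lipschitz, the difference quotient is $1$-Lipschitz in $h$ and jointly continuous in $(\lambda,y)$. Hence
\[
\sigma(t,h)=\lim_{n\to\infty}\ \sup\Bigl\{\tfrac{1}{\lambda}\bigl[\varphi(t,u(t)+q+\lambda h)-\varphi(t,u(t)+q)\bigr]: \lambda\in\mathbb{Q}\cap]0,\tfrac1n[,\ q\in D,\ \Vert q\Vert<\tfrac1n\Bigr\}.
\]
Each map $t\mapsto\varphi(t,u(t)+q+\lambda h)$ is measurable, being the composition of a Carathéodory (in fact jointly continuous) function with a measurable map. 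A countable supremum and then a monotone limit preserve measurability, so $\sigma(\cdot,h)$ is measurable and (i) follows.

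For (ii), assume $u(t)\in C(t)$ a.e. By \eqref{lem:trunc}, $N_{C(t)}(u(t))\cap\mathbb{B}=\partial d_{C(t)}(u(t))$, so
\[
N_{C(t)}(u(t))=\bigcup_{k\in\mathbb{N}}k\,\partial d_{C(t)}(u(t)).
\]
Each map $t\rightrightarrows k\,\partial d_{C(t)}(u(t))$ is measurable by (i). A countable union of measurable multifunctions is measurable, since $\{t: \bigcup_k M_k(t)\cap U\neq\emptyset\}=\bigcup_k\{t: M_k(t)\cap U\neq\emptyset\}$. The values are nonempty, containing $0$, and closed as normal cones. On the null set where $u(t)\notin C(t)$ we may redefine the map, since $\mathcal I$ is complete.

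The main obstacle is the first step of (i): passing from the scalar measurability of $\sigma(\cdot,h)$ to measurability of the multifunction in the sense of open sets. For this I would invoke Castaing--Valadier \cite[Theorem III.15 or its corollaries]{Castaing_Valadier-1977}, which states that a multifunction with convex, weakly compact values in a separable Banach space is measurable iff its support functions are measurable. If needed, I would also note that norm-open and weakly-open Effros measurability agree for such values.
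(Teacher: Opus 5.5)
Your proof is correct and shares its core with the paper's. Both identify the support function of $\partial d_{C(t)}(u(t))$ with $d^{\circ}_{C(t)}(u(t);h)$ and write the latter as a countable $\inf$--$\sup$ of difference quotients that are measurable in $t$. Both then obtain (ii) from $N_{C(t)}(u(t))=\bigcup_{k}k\,\partial d_{C(t)}(u(t))$.

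Two details differ.
\begin{itemize}
\item You evaluate the quotients at the measurable points $u(t)+q$, $q\in D$. This is cleaner than the paper's device of fixed points $z\in Q$ combined with the indicators $\mathbf{1}_{E_{z,n}}$ and the truncation by $-\Vert w\Vert$.
\item For the passage from scalar to set-valued measurability, the paper writes the graph as $\bigcap_{w\in Q_0}\{(t,v)\colon \langle v,w\rangle\le\Theta_w(t)\}$. This set is $\mathcal{I}\otimes\mathcal{B}(\H)$-measurable, and the paper then applies the measurable projection theorem, which uses completeness of $\mathcal{I}$. You instead cite the scalar-measurability criterion for convex weakly compact values.
\end{itemize}

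The step you flag as the main obstacle is actually elementary in a Hilbert space and needs no completeness. For a nonempty closed convex $K\subset\mathbb{B}$, the support function $\sigma_K$ is $1$-Lipschitz, so $d(x,K)=\sup_{h\in D\cap\mathbb{B}}\bigl(\langle h,x\rangle-\sigma_K(h)\bigr)$. Hence $t\mapsto d\bigl(x,\partial d_{C(t)}(u(t))\bigr)$ is measurable for each $x$. Then $\{t\colon \partial d_{C(t)}(u(t))\cap U\neq\emptyset\}$ is a countable union of sets of the form $\{t\colon d(x_j,\partial d_{C(t)}(u(t)))<r_j\}$, hence measurable.

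Your remark on the null set where $u(t)\notin C(t)$, where the normal cone is empty, makes explicit a point the paper leaves implicit. Completeness of $\mathcal{I}$ is exactly what makes that null set harmless.
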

\begin{proof}
(i) For every $t\in I$ the function $d_{C(t)}(\cdot)$ is $1$-Lipschitz on $\H$. Hence $\partial d_{C(t)}(u(t))$ is a nonempty, convex and weakly compact subset of $\mathbb{B}$. Fix $w\in \H$ and set $\Theta_{w}(t):=d^{\circ}_{C(t)}(u(t);w)$. Let $Q$ be a countable dense subset of $\H$. For $z\in Q$ and $s\in \mathbb{Q}$ with $s>0$, put
\begin{align*}
\alpha_{z,s}(t):=\frac{d_{C(t)}(z+sw)-d_{C(t)}(z)}{s},\qquad
E_{z,n}:=\left\{t\in I\colon \Vert z-u(t)\Vert<\tfrac1n\right\}.
\end{align*}
By assumption, the map $t\mapsto d_{C(t)}(y)$ is $L_{C}$-Lipschitz on $I$ for each fixed $y\in \H$, so that every $\alpha_{z,s}$ is continuous. Moreover $E_{z,n}\in \mathcal{I}$, because $u$ is measurable, and $\vert\alpha_{z,s}\vert\leq \Vert w\Vert$, because $d_{C(t)}$ is $1$-Lipschitz. Since, for each fixed $t$, the map $(y,s)\mapsto s^{-1}\left(d_{C(t)}(y+sw)-d_{C(t)}(y)\right)$ is continuous on $\H\times ]0,+\infty[$, the supremum defining
\begin{align*}
\Sigma_{n}(t):=\sup\left\{\frac{d_{C(t)}(y+sw)-d_{C(t)}(y)}{s}\colon \Vert y-u(t)\Vert<\tfrac1n,\ 0<s<\tfrac1n\right\}
\end{align*}
is unchanged if $y$ is restricted to $Q$ and $s$ to $\mathbb{Q}$. Consequently,
\begin{align*}
\Sigma_{n}=\sup_{z\in Q,\ s\in \mathbb{Q}\cap ]0,1/n[}\ \left(\alpha_{z,s}\,\mathbf{1}_{E_{z,n}}-\Vert w\Vert\,\mathbf{1}_{I\setminus E_{z,n}}\right),
\end{align*}
which is a countable supremum of measurable functions, hence measurable (the truncation by $-\Vert w\Vert$ does not affect the supremum, since $\alpha_{z,s}\geq -\Vert w\Vert$ and, by density of $Q$, for every $t$ there is at least one $z\in Q$ with $t\in E_{z,n}$). Therefore $\Theta_{w}=\inf_{n\in \mathbb{N}}\Sigma_{n}$ is measurable. Let now $Q_{0}$ be a countable dense subset of $\H$. Since $d^{\circ}_{C(t)}(u(t);\cdot)$ is $1$-Lipschitz, \eqref{eq:clarke-support} gives
\begin{align*}
\left\{(t,v)\colon v\in \partial d_{C(t)}(u(t))\right\}=\bigcap_{w\in Q_{0}}\left\{(t,v)\in I\times \H\colon \langle v,w\rangle-\Theta_{w}(t)\leq 0\right\}.
\end{align*}
Each map $(t,v)\mapsto \langle v,w\rangle-\Theta_{w}(t)$ is measurable in $t$ and continuous in $v$, hence $\mathcal{I}\otimes\mathcal{B}(\H)$-measurable.  Thus the above graph is $\mathcal{I}\otimes \mathcal{B}(\H)$-measurable. Since $\mathcal{I}$ is complete, $\H$ is Polish and the values are closed, the projection theorem \cite[Theorem~III.30]{Castaing_Valadier-1977} yields the measurability of $t\rightrightarrows \partial d_{C(t)}(u(t))$.\\
\noindent (ii) If $u(t)\in C(t)$, then \eqref{lem:trunc} gives $\partial d_{C(t)}(u(t))=N_{C(t)}(u(t))\cap \mathbb{B}$ and\begin{align*}
N_{C(t)}(u(t))=\bigcup_{n\in\mathbb{N}}n\,\partial d_{C(t)}(u(t)).
\end{align*}
Each map $t\rightrightarrows n\,\partial d_{C(t)}(u(t))$ is measurable by (i), and for every open $U\subset \H$ one has $\{t\colon N_{C(t)}(u(t))\cap U\neq \emptyset\}=\bigcup_{n}\{t\colon n\,\partial d_{C(t)}(u(t))\cap U\neq\emptyset\}\in \mathcal{I}$. The values are closed and contain $0$.
\end{proof}
\noindent The next result is a new enhanced version of the classical Gr\"onwall inequality, in the spirit of \cite[Theorems~3.1 and~3.2]{Vilches2024}, tailored to the quadratic estimates arising in Section~\ref{Main_result}.
\begin{proposition}[Enhanced Gr\"onwall inequality]\label{Gronwall}
Let $T>0$ and set $I:=[0,T]$. Let $\beta\colon I\to\mathbb{R}$ be a nonnegative absolutely continuous function, and let
$\varepsilon,K_i\colon I\to\mathbb{R}_{+}$, $i=1,\ldots,6$, be integrable functions. Suppose that, for a.e. $t\in I$,
\begin{equation}\label{eq:EG3-ineq}
\begin{aligned}
\dot{\beta}(t)
&\leq \varepsilon(t)
+K_1(t)\sqrt{\beta(t)}
+K_2(t)\beta(t) \\
&\quad
+K_3(t)\sqrt{\beta(t)}
\int_0^t K_4(s)\sqrt{\beta(s)}\,ds
+K_5(t)\int_0^t K_6(s)\sqrt{\beta(s)}\,ds.
\end{aligned}
\end{equation}
Then, for every $t\in I$,
\begin{align*}
\beta(t)
&\leq
\beta(0)
\exp\left(
\int_0^t\bigl(\eta_1(s)+\eta_2(s)\bigr)\,ds
\right)\\
&+\int_0^t
\bigl(\varepsilon(s)+\eta_1(s)\bigr)
\exp\left(
\int_s^t\bigl(\eta_1(\tau)+\eta_2(\tau)\bigr)\,d\tau
\right)\,ds,
\end{align*}
with {\small $\eta_1(t)
:=K_1(t)+K_5(t)\int_0^t K_6(s)\,ds$} and  {\small $\eta_2(t)
:=K_2(t)+K_3(t)\int_0^t K_4(s)\,ds$}.
\end{proposition}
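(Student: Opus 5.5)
The plan is a comparison argument against the solution of the linear majorant equation, made rigorous by a small perturbation of the initial value. Set $\eta:=\eta_1+\eta_2$ and, for $\delta>0$, let $z_\delta$ be the absolutely continuous solution of
\begin{equation*}
\dot z_\delta(t)=\varepsilon(t)+\eta_1(t)+\eta(t)\,z_\delta(t)\quad\textrm{a.e. } t\in I,\qquad z_\delta(0)=\beta(0)+\delta .
\end{equation*}
By variation of constants, $z_\delta(t)$ equals the right-hand side of the claimed bound with $\beta(0)$ replaced by $\beta(0)+\delta$. The functions $\eta_1,\eta_2$ are integrable because $K_5,K_3$ are integrable and $\int_0^tK_6,\int_0^tK_4$ are bounded. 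Since $\varepsilon,\eta_1,\eta\geq 0$ and $z_\delta(0)>0$, the function $z_\delta$ is positive and \emph{nondecreasing}. This monotonicity is the key structural fact: it lets the memory terms be controlled by the present value $z_\delta(t)$. Once $\beta\leq z_\delta$ is proved on $I$ for every $\delta>0$, letting $\delta\to 0$ gives the statement.

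To prove $\beta<z_\delta$ on $I$, I argue by contradiction. At $t=0$ we have $\beta(0)<z_\delta(0)$, and both functions are continuous. So if the claim fails, there is a first time $t^*\in\, ]0,T]$ with $\beta(t^*)=z_\delta(t^*)$, and $\beta\leq z_\delta$ on $[0,t^*]$. For a.e. $t\in[0,t^*]$ and every $s\leq t$, monotonicity gives $\beta(s)\leq z_\delta(s)\leq z_\delta(t)$. I then estimate each term of \eqref{eq:EG3-ineq} as follows:
\begin{align*}
K_1(t)\sqrt{\beta(t)}&\leq K_1(t)\bigl(1+z_\delta(t)\bigr),\\
K_3(t)\sqrt{\beta(t)}\int_0^tK_4(s)\sqrt{\beta(s)}\,ds&\leq K_3(t)\Bigl(\int_0^tK_4\Bigr)z_\delta(t),\\
K_5(t)\int_0^tK_6(s)\sqrt{\beta(s)}\,ds&\leq K_5(t)\Bigl(\int_0^tK_6\Bigr)\bigl(1+z_\delta(t)\bigr).
\end{align*}
The first and third lines use the elementary inequality $\sqrt{a}\leq 1+a$. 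The middle line uses $\sqrt{\beta(t)}\sqrt{\beta(s)}\leq z_\delta(t)$, which is exactly where monotonicity of $z_\delta$ replaces the missing monotonicity of $\beta$.

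Summing these with $K_2\beta\leq K_2z_\delta$ yields, for a.e. $t\in[0,t^*]$,
\begin{equation*}
\dot\beta(t)\leq \varepsilon(t)+\eta_1(t)+\eta(t)z_\delta(t)=\dot z_\delta(t).
\end{equation*}
Integrating the absolutely continuous function $\beta-z_\delta$ over $[0,t^*]$ then gives $\beta(t^*)-z_\delta(t^*)\leq \beta(0)-z_\delta(0)=-\delta<0$, which contradicts the choice of $t^*$.

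The main obstacle is the product term $K_3\sqrt{\beta(t)}\int_0^t K_4\sqrt{\beta}$. It cannot be handled by a direct Gr\"onwall argument on $\beta$, because $\beta$ need not be monotone. The first-crossing-time argument against the nondecreasing majorant $z_\delta$ is what removes this difficulty. The remaining points are routine bookkeeping: the a.e. inequalities only need to hold on $[0,t^*]$, and the $\delta$-perturbation is needed so that strict inequality holds at $t=0$.
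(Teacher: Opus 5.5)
Your argument is correct, but it establishes the domination by a nondecreasing majorant differently from the paper.

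The paper never compares $\beta$ directly with the solution of the linear equation. It sets $v(t):=\beta(0)+\int_0^t\Phi(s)\,ds$, where $\Phi$ is the right-hand side of \eqref{eq:EG3-ineq}. Then $\beta\le v$ holds on all of $I$ simply by integrating the hypothesis, and $v$ is nondecreasing because $\Phi\ge 0$. The same termwise estimates you use, with $v$ in place of $z_\delta$, give $\dot v\le \varepsilon+\eta_1+(\eta_1+\eta_2)v$, and the classical Gr\"onwall inequality finishes the proof.

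The paper's route gives a global domination in one line, with no first-crossing time, no strict initial gap and no $\delta\to 0$ limit. The price is citing the linear Gr\"onwall inequality as a black box.

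Your barrier argument against the explicit solution $z_\delta$ is self-contained: it proves the linear comparison by hand. It also only uses the hypothesis on the interval where $\beta\le z_\delta$ is already known. In exchange, it needs the $\delta$-perturbation and the final limit.

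The core mechanism is identical in both proofs: a nondecreasing majorant absorbs the memory terms, so that $\sqrt{\beta(t)}\int_0^t K_4\sqrt{\beta}$ is bounded by the majorant at time $t$, combined with $\sqrt{a}\le 1+a$.
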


\begin{proof}
Let $\Phi\colon I\to\mathbb{R}_{+}$ denote the right-hand side of
\eqref{eq:EG3-ineq}, that is,
\begin{align*}
\Phi(t)
&:=
\varepsilon(t)
+K_1(t)\sqrt{\beta(t)}
+K_2(t)\beta(t) \\
&\quad
+K_3(t)\sqrt{\beta(t)}
\int_0^t K_4(s)\sqrt{\beta(s)}\,ds
+K_5(t)\int_0^t K_6(s)\sqrt{\beta(s)}\,ds,
\end{align*}
and define
\[
v(t):=\beta(0)+\int_0^t\Phi(s)\,ds,
\qquad t\in I.
\]
Since $\beta$ is continuous on the compact interval $I$, it is bounded.
Therefore, the integrability of $\varepsilon$ and $K_1,\ldots,K_6$
implies that $\Phi\in L^1(I;\mathbb{R})$. Consequently, $v$ is well
defined and absolutely continuous, with $\dot v(t)=\Phi(t)$ for a.e. $t\in I$. The function $v$ satisfies:
\begin{enumerate}[label=(\roman*)]
    \item $\beta(0)=v(0)$;
    \item $\beta(t)\leq v(t)$ for every $t\in I$;
    \item $v$ is nondecreasing on $I$.
\end{enumerate}
Indeed, (i) follows from the definition of $v$. Moreover, by
the absolute continuity of $\beta$ and \eqref{eq:EG3-ineq},
\[
\beta(t)
=
\beta(0)+\int_0^t\dot{\beta}(s)\,ds
\leq
\beta(0)+\int_0^t\Phi(s)\,ds
=
v(t),
\]
which proves (ii). Finally, (iii) follows from $\Phi\geq0$. Since $\varepsilon$ and $K_1,\ldots,K_6$ are nonnegative, the right-hand side of \eqref{eq:EG3-ineq} is nondecreasing with respect to $\beta$. Hence, using (ii), and then (iii), which gives $\sqrt{v(s)}\leq\sqrt{v(t)}$ whenever $0\leq s\leq t$, we obtain, for a.e. $t\in I$,
\begin{align*}
\dot v(t)
&\leq
\varepsilon(t)
+K_1(t)\sqrt{v(t)}
+K_2(t)v(t) \\
&\qquad \qquad 
+K_3(t)\sqrt{v(t)}
\int_0^t K_4(s)\sqrt{v(s)}\,ds
+K_5(t)\int_0^t K_6(s)\sqrt{v(s)}\,ds \\
&\leq
\varepsilon(t)
+K_1(t)\sqrt{v(t)}
+K_2(t)v(t) \\
&\qquad \qquad \qquad
+K_3(t)v(t)\int_0^t K_4(s)\,ds
+K_5(t)\sqrt{v(t)}\int_0^t K_6(s)\,ds \\
&=
\varepsilon(t)
+\eta_1(t)\sqrt{v(t)}
+\eta_2(t)v(t)\\
&\leq \varepsilon(t)+\eta_1(t)
+\bigl(\eta_1(t)+\eta_2(t)\bigr)v(t),
\end{align*}
where we used that $\sqrt{v(t)}\leq 1+v(t)$, for $t\in I$. The functions $\eta_{1}$ and $\eta_{2}$ are nonnegative and integrable on $I$, so the classical Gr\"onwall inequality yields
\begin{align*}
v(t)
&\leq
v(0)
\exp\left(
\int_0^t\bigl(\eta_1(s)+\eta_2(s)\bigr)\,ds
\right)\\
&+\int_0^t
\bigl(\varepsilon(s)+\eta_1(s)\bigr)
\exp\left(
\int_s^t\bigl(\eta_1(\tau)+\eta_2(\tau)\bigr)\,d\tau
\right)\,ds.
\end{align*}
The conclusion follows from $v(0)=\beta(0)$ and $\beta(t)\leq v(t)$.
\end{proof}

\subsection{One-Sided Lipschitz Property and Examples}
In this section, we present the one-sided Lipschitz (OSL) property together with several examples illustrating this condition. Introduced by Donchev in \cite{Donchev_T-1991-Functional} (see also \cite{Donchev_T-2002,Donchev_Farkhi_Mordukhovich-2007}), this notion was subsequently used by Donchev and Farkhi in \cite{Donchev_T-Farkhi_E-1999-Approximations} to extend Filippov's theorem for general differential inclusions.

\begin{definition}\label{def:OSL}
We say that a set-valued map $F\colon I\times \H\rightrightarrows \H$ is \textnormal{One-Sided Lipschitz} (OSL) if for all $r>0$, there exists an integrable function $L_{F}^{r}\colon I \to \mathbb{R}$ such that for every $x,y\in r\mathbb{B}$, a.e. $t\in I$, and $v\in F(t,x)$, there exists $w\in F(t,y)$ such that
\begin{align*}
    \langle x-y,v-w\rangle \leq L_{F}^{r}(t)\Vert x-y\Vert^2.
\end{align*}
\end{definition}
\noindent We next present classes of OSL set-valued maps that arise naturally in controlled differential systems, possibly with nonconvex values and without  Lipschitz continuity with respect to the Hausdorff distance.
\begin{example}
Let $U$ be a nonempty set and let $p\colon I\times \H\times U\to \H$ be a given mapping. Assume that there exists an integrable function $L\colon I\to\mathbb{R}$ such that
\begin{align*}
    \langle x - y, p(t, x, u) - p(t, y, u) \rangle \leq L(t)\Vert x - y\Vert^2,
\end{align*}
for a.e. $t\in I$, for all $x,y\in\mathcal{H}$, and for every $u\in U$. Then the set-valued map
\begin{align*}
    F(t, x) = p(t, x, U) := \left\lbrace v \in \mathcal{H} \mid \exists u \in U \text{ such that } v = p(t, x, u) \right\rbrace
\end{align*}
is one-sided Lipschitz, with $L_{F}^{r}=L$ for every $r>0$. Indeed,  given $v=p(t,x,u)\in F(t,x)$, it suffices to take $w:=p(t,y,u)\in F(t,y)$.
\end{example}
\noindent We next show that subdifferentials of weakly convex functions provide another important class of OSL set-valued maps.
\begin{example}
Let $\phi\colon I \times \H\to \mathbb{R}$ and set $\phi_t(\cdot):=\phi(t,\cdot)$. Assume that there exists an integrable function $\sigma\colon  I\to\mathbb{R}_{+}$  such that $\phi_{t}(\cdot)+(\sigma(t)/2)\Vert \cdot\Vert^2$ is a continuous convex function for every $t\in I$. Consider
\begin{align*}
    F(t,x):=\partial \phi_t(x) := \partial\left(\phi_t(\cdot) +\frac{\sigma(t)}{2}\Vert \cdot\Vert^2\right)(x)-\sigma(t)x,
\end{align*}
where $\partial$ is the convex subdifferential. Hence, these sets are nonempty since every continuous convex function on $\H$ is everywhere subdifferentiable. Then $F(t,x)$ satisfies the OSL condition with $L_{F}^{r}(t)=\sigma(t)$ for every $r>0$.
\end{example}
\noindent We next observe that the OSL property is preserved under perturbations by monotone operators.
\begin{example}
Let $A\colon \H \rightrightarrows \H$ be a monotone operator with
$\operatorname{dom} A = \H$, i.e.,
\begin{align*}
\langle x-y,u-v\rangle \geq 0 \quad \textrm{ for all } (x,u)\in \gph A \textrm{ and } (y,v)\in \gph A,
\end{align*}
and let $f\colon \H\to\H$ be one-sided Lipschitz with constant $L\in \mathbb{R}$, that is,
$$\langle x-y, f(x)-f(y)\rangle\leq L\Vert x-y\Vert^{2} \textrm{ for all  } x,y\in \H.
$$ 
Then $F:=f-A$ satisfies the OSL condition with $L_{F}^{r}\equiv L$ for every $r>0$.
\end{example}
\noindent We next show that the OSL property is compatible with the normal-cone perturbations arising in sweeping processes over uniformly prox-regular moving sets.
\begin{example}
Define the multivalued map $G\colon I\times\mathcal H\rightrightarrows\mathcal{H}$ by
\begin{align*}
G(t,x):=-\kappa(t)\partial d_{C(t)}(x)+F(t,x),
\end{align*}
where $F\colon I\times\mathcal{H}\rightrightarrows\mathcal{H}$ has nonempty values and satisfies the OSL condition, $C\colon I\rightrightarrows\mathcal H$ is a set-valued map such that each set $C(t)$ is $\rho$-uniformly prox-regular (see Section~\ref{Math_Prel}), and $\kappa\colon I\to \mathbb{R}_{+}$ is integrable. Then $G$ satisfies the following (OSL)-type estimate along the moving set: for all $r>0$, a.e. $t\in I$, all $x,y\in C(t)\cap r\mathbb{B}$ and  $v\in G(t,x)$, there exists $w\in G(t,y)$ such that
\begin{align*}
\langle x-y, v-w\rangle\leq \left(L_{F}^{r}(t)+\frac{\kappa(t)}{\rho}\right)\Vert x-y\Vert^{2}.
\end{align*}
Indeed, let $x,y\in C(t)\cap r\mathbb{B}$ and $v\in G(t,x)$, and write $v=-\kappa(t)\xi_{x}+u_{x}$ with $\xi_{x}\in \partial d_{C(t)}(x)$ and $u_{x}\in F(t,x)$. By the OSL property of $F$, there exists $u_{y}\in F(t,y)$ such that 
$$
\langle x-y,u_{x}-u_{y}\rangle\leq L_{F}^{r}(t)\Vert x-y\Vert^{2}.
$$ Take any $\xi_{y}\in \partial d_{C(t)}(y)$ and set $w:=-\kappa(t)\xi_{y}+u_{y}\in G(t,y)$. Since $x,y\in C(t)$, \eqref{lem:trunc} yields $\xi_{x}\in N^{P}(C(t);x)\cap \mathbb{B}$ and $\xi_{y}\in N^{P}(C(t);y)\cap \mathbb{B}$, so that the $\frac{1}{\rho}$-hypomonotonicity property of Proposition~\ref{Prop2.1}(e) gives
\begin{align*}
\langle \xi_{x}-\xi_{y},x-y\rangle\geq -\frac{1}{\rho}\Vert x-y\Vert^{2}.
\end{align*}
Therefore,
\begin{align*}
\langle x-y, v-w\rangle&=-\kappa(t)\langle x-y,\xi_{x}-\xi_{y}\rangle+\langle x-y,u_{x}-u_{y}\rangle\\
&\leq \left(L_{F}^{r}(t)+\frac{\kappa(t)}{\rho}\right)\Vert x-y\Vert^{2}.
\end{align*}
\end{example}

\section{Technical Assumptions}\label{hipo-sol}

In this section, we collect the assumptions on the data of the Volterra sweeping process \eqref{Sweeping-Dif1}. Assumptions \ref{Hf} and \ref{Hg} impose Carath\'eodory-type regularity and linear growth on the single-valued perturbations. The multivalued perturbation $F$ is subject to \ref{H1F} and \ref{H2F}, which provide the measurability and closedness properties required for the existence of the measurable selections used throughout the paper (see Lemma~\ref{Lemma_exis} and Proposition~\ref{meassum}), and to \ref{H3F}, a linear growth condition. The one-sided Lipschitz condition \ref{HOSL} of Definition~\ref{def:OSL} plays a central role, since it underlies the Filippov-type estimates of Section~\ref{Main_result}. Finally, \ref{HC} gathers the geometric and regularity assumptions on the moving set, while \ref{H4F} and \ref{HC3} are compactness conditions, only one of which is required in our main results (see Theorem~\ref{Filippov_T}).

\begin{enumerate}[leftmargin=2.8em]
    \item[\namedlabel{Hf}{$(\mathsf{H}^{f_1})$}]  The function $f_1\colon I\times \H\to \H$ satisfies:
\end{enumerate}
\begin{enumerate}
        \item[(a)] For each $x\in \H$, the map $t \mapsto f_1(t,x)$ is measurable.

        \item[(b)]  For all $r>0$, there exists  $l_{r}^{1}\colon I\to \mathbb{R}_{+}$ integrable such that for a.e. $t\in I$
\begin{align*}
        \Vert f_1(t,x)-f_1(t,y)\Vert \leq l_{r}^{1}(t)\Vert x-y\Vert \textrm{ for all } x,y\in r\mathbb{B}.
\end{align*}

        \item[(c)]  There exists a nonnegative integrable function $a_{1}\colon I\to \mathbb{R}_{+}$ such that
\begin{align*}
        \Vert f_1(t,x)\Vert \leq a_{1}(t) (1 + \Vert x\Vert) \textrm{ for a.e. } t\in I \textrm{ and all } x\in \H.
\end{align*}
\end{enumerate}

\begin{enumerate}[leftmargin=2.8em]
    \item[\namedlabel{Hg}{$(\mathsf{H}^{f_2})$}]  Set $D:=\{(t,s)\in I\times I\colon s\leq t\}$. The function $f_2\colon D\times \H\to \H$ satisfies:
    \end{enumerate}
\begin{enumerate}
\item[(a)] For each $x\in \H$, the map $(t,s) \mapsto f_2(t,s,x)$ is measurable on $D$.

           \item[(b)] For all $r>0$, there exists an integrable function $l_{r}^{2}\colon I\to \mathbb{R}_+$ such that for a.e. $t\in I$ and all $s\in [0,t]$
\begin{align*}
        \Vert f_2(t,s,x)-f_2(t,s,y)\Vert \leq l_{r}^{2}(t)\Vert x-y\Vert \textrm{ for all } x,y\in r\mathbb{B}.
\end{align*}

\item[(c)] There exists a nonnegative function $a_{2}\in L^1(D)$ such that
      \begin{align*}
        \Vert f_2(t,s,x)\Vert \leq a_{2}(t,s)(1+\Vert x\Vert) \textrm{ for a.e. } (t,s)\in D \textrm{ and all } x\in \H.
      \end{align*}
    \end{enumerate}
\noindent Notice that the Lipschitz constant $l_{r}^{2}$ in \ref{Hg}(b) depends on $t$ only. This separated structure is what makes the enhanced Gr\"onwall inequality of Proposition~\ref{Gronwall} applicable, with $K_4\equiv K_6\equiv 1$, in the proof of Theorem~\ref{Filippov_T}.

\begin{enumerate}[leftmargin=2.65em]
    \item[\namedlabel{HF}{$(\mathsf{H}^{F})$}]   The set-valued map $F\colon I\times \H\rightrightarrows \H$ has nonempty, closed and convex values, and satisfies:
        \end{enumerate}
\begin{enumerate}[leftmargin=3.65em]
        \item[\namedlabel{H1F}{$(\mathsf{H}^{F}_{1})$}] The set-valued map $t\rightrightarrows \gph F(t,\cdot)$ is measurable, that is, for every (norm) open set $U\subset \H \times \H$, the set $\{ t \in I\colon \gph F(t,\cdot)\cap U\neq \emptyset \}$ belongs to $\mathcal{I}$.

        \item[\namedlabel{H2F}{$(\mathsf{H}^{F}_{2})$}] For a.e. $t\in I$, $\gph F(t,\cdot)$ is closed in $\mathcal{H}\times \mathcal{H}_w$.

        \item[\namedlabel{H3F}{$(\mathsf{H}^{F}_{3})$}] There exist nonnegative integrable functions $c$ and $m$ such that
        \begin{equation*}
            \begin{aligned}
                \Vert F(t,x)\Vert:=\sup\{\Vert w\Vert \colon w\in F(t,x)\}\leq c(t) \Vert x\Vert + m(t),
            \end{aligned}
        \end{equation*}
        for all $x\in \H$ and a.e. $t\in I$.

\end{enumerate}

\begin{enumerate}[leftmargin=3.85em]
    \item[\namedlabel{H4F}{$(\mathsf{H}^{F}_{\mathrm{comp}})$}]
For all $r>0$, there exists an integrable function $k_r\colon I\to \mathbb{R}_+$ such that for a.e. $t\in I$ and every nonempty $A\subset r\mathbb{B}$, one has
\begin{align*}
    \chi(F(t,A))\leq k_r(t)\chi(A),
\end{align*}
where $\chi$ is either the Kuratowski or the Hausdorff measure of noncompactness and $F(t,A):=\bigcup_{x\in A}F(t,x)$. Notice that, by \ref{H3F}, the set $F(t,A)$ is bounded for a.e. $t\in I$, so that $\chi(F(t,A))$ is finite.
\end{enumerate}

\begin{enumerate}[leftmargin=2.95em]
    \item[\namedlabel{HOSL}{$(\mathsf{H}^{F}_{\mathrm{osl}})$}] One-Sided Lipschitz property (see Definition~\ref{def:OSL}): For all $r > 0$, there exists an integrable function $L_{F}^{r}\colon I \to \mathbb{R}$ such that for every $x,y\in r\mathbb{B}$, a.e. $t\in I$, and $v\in F(t,x)$, there exists $w\in F(t,y)$ such that
\begin{align*}
    \langle x-y,v-w\rangle \leq L_{F}^{r}(t)\Vert x-y\Vert^2.
\end{align*}
\end{enumerate}

\begin{enumerate}[leftmargin=2.65em]
    \item[\namedlabel{HC}{$(\mathsf{H}^{C})$}]  The set-valued map $C\colon I \rightrightarrows \H$
 has nonempty, closed values which are uniformly $\rho$-prox-regular for some $\rho\in ]0,+\infty]$ independent of $t$, and there exists $L_{C}\geq 0$ such that
\begin{equation*}
\operatorname{Haus}(C(t),C(s))\leq L_{C} |t-s| \textrm{ for all } s,t\in I.
\end{equation*}
\end{enumerate}
\begin{enumerate}[leftmargin=3.85em]
  \item[\namedlabel{HC3}{$(\mathsf{H}^{C}_{\mathrm{comp}})$}] For all $t\in I$ and all $r>0$, the set $r\mathbb{B}\cap C(t)$ is compact.
\end{enumerate}

We close this section with some remarks. The first one shows that, under the above assumptions, all the superposition maps and integrals appearing in \eqref{Sweeping-Dif1} and in Sections~\ref{Sweeping-sec} and \ref{Main_result} are well defined.

\begin{remark}\label{rem:superposition}
Assume \ref{Hf} and \ref{Hg}, and let $u\colon I\to \H$ be measurable with $\Vert u(t)\Vert\leq R$ for a.e. $t\in I$ and some $R\geq 0$. Then the following assertions hold.
\begin{enumerate}[label=\textnormal{(\roman*)}]
\item The map $t\mapsto f_1(t,u(t))$ is measurable and belongs to $L^1(I;\H)$, with $\Vert f_1(t,u(t))\Vert\leq (1+R)\, a_1(t)$ for a.e. $t\in I$.
\item The map $(t,s)\mapsto f_2(t,s,u(s))$ is measurable on $D$ and satisfies
$$\Vert f_2(t,s,u(s))\Vert\leq (1+R)\, a_2(t,s) \textrm{ for a.e. } (t,s)\in D.$$ 
Hence, for a.e. $t\in I$, the map $s\mapsto f_2(t,s,u(s))$ belongs to $L^1([0,t];\H)$, the map $t\mapsto \int_0^t f_2(t,s,u(s))\,ds$ is measurable and
\begin{align*}
\int_0^T\left\Vert \int_0^t f_2(t,s,u(s))\,ds\right\Vert dt\leq (1+R)\Vert a_2\Vert_{L^1(D)}<+\infty.
\end{align*}
\item The function $\gamma$ defined in \eqref{eta-eq} is nonnegative, measurable and integrable, and $\int_0^T \gamma(t)\,dt=\Vert a_1\Vert_{L^1(I)}+\Vert a_2\Vert_{L^1(D)}$.
\end{enumerate}
Indeed, by \ref{Hf}(a) and \ref{Hf}(b), the map $f_1$ is measurable in $t$ for each fixed $x$ and continuous in $x$ for a.e. $t$. Hence it is a Carath\'eodory map and therefore $\mathcal{I}\otimes \mathcal{B}(\H)$-measurable (see, e.g., \cite{Castaing_Valadier-1977,Aubin_Frankowska_2009_book}), the separability of $\H$ being used here. Since $t\mapsto (t,u(t))$ is measurable, the composition $t\mapsto f_1(t,u(t))$ is measurable, and (i) follows from \ref{Hf}(c). The same argument applied to $f_2$ gives the measurability of $(t,s)\mapsto f_2(t,s,u(s))$ on $D$, while \ref{Hg}(c) yields the pointwise bound in (ii). Finally, the remaining assertions in (ii), as well as (iii), follow from the Fubini-Tonelli theorem, since $a_2\in L^1(D)$ and $a_2\geq 0$.
\end{remark}
The next remark discusses assumptions on the set-valued map $F$.
\begin{remark}\label{rem:HF} 
\begin{enumerate}[label=\textnormal{(\roman*)}]
\item Since the topology of $\H\times \H_w$ is coarser than the norm topology of $\H\times \H$, assumption \ref{H2F} implies that $\gph F(t,\cdot)$ is also closed in $\H\times\H$ for a.e. $t\in I$. Hence, $t\rightrightarrows \gph F(t,\cdot)$ is a closed-valued set-valued map between the separable metric spaces $I$ and $\H\times \H$, and \ref{H1F} is its usual measurability. In particular, Lemma~\ref{Lemma_exis} and Proposition~\ref{meassum} apply.
\item For a.e. $t\in I$ and every $x\in \H$, the set $F(t,x)$ is a section of $\gph F(t,\cdot)$ and is therefore weakly closed by \ref{H2F}. Being also convex, by \ref{HF}, and bounded, by \ref{H3F}, each set $F(t,x)$ is convex and weakly compact.
\item In the proofs, \ref{H2F} is used in the following sequential form: for a.e. $t\in I$, if $x_n\to x$ strongly in $\H$, $v_n\in F(t,x_n)$ and $v_n\rightharpoonup v$ weakly in $\H$, then $v\in F(t,x)$. Since bounded subsets of the separable Hilbert space $\H$ are weakly metrizable, this sequential form is equivalent to \ref{H2F} on bounded sets, which, by \ref{H3F}, is the only situation encountered in this paper.
\item By \ref{H1F} and Remark~\ref{rem:HF}(i), the set $\gph F:=\{(t,u,v)\in I\times \H\times \H\colon v\in F(t,u)\}$ is $\mathcal{I}\otimes \mathcal{B}(\H\times \H)$-measurable, as the graph of a measurable closed-valued set-valued map defined on $I$ with values in the Polish space $\H\times\H$.
\end{enumerate}
\end{remark}

\begin{remark}\label{rem:monotone-radii}
If $0<r\leq r'$, then any function admissible in \ref{Hf}(b),
\ref{Hg}(b), or \ref{HOSL} for the radius $r'$ is also admissible
for the radius $r$. Hence, without loss of generality, the corresponding
local moduli may be chosen nondecreasing with respect to the radius.
Indeed, replacing them by 
\begin{equation*}
\widehat l^{1}_{r}(t)
:=\sup_{\substack{j\in\mathbb{N}\\1\leq j\leq \lceil r\rceil}}
l^{1}_{j}(t),\quad  
\widehat l^{2}_{r}(t)
:=\sup_{\substack{j\in\mathbb{N}\\1\leq j\leq \lceil r\rceil}}
l^{2}_{j}(t),  \,\textrm{ and } \,
\widehat L_{F}^{r}(t)
:=\sup_{\substack{j\in\mathbb{N}\\1\leq j\leq \lceil r\rceil}}
L_{F}^{j}(t),
\end{equation*}
we obtain integrable admissible moduli such that, for a.e. $t\in I$,
the maps $r\mapsto \widehat l^{1}_{r}(t)$, $r\mapsto \widehat l^{2}_{r}(t)$, and $r\mapsto \widehat L_{F}^{r}(t)$ are nondecreasing.
\end{remark}

\begin{remark}\label{rem:OSL-normalization}
\begin{enumerate}[label=\textnormal{(\roman*)}]
\item The inequality in property \ref{HOSL} remains valid when $L_{F}^{r}$ is replaced by any larger integrable function. In particular, replacing $L_{F}^{r}$ by its positive part $(L_{F}^{r})^{+}:=\max\{L_{F}^{r},0\}$, we may and do assume, without loss of generality, that $L_{F}^{r}(t)\geq 0$ for a.e. $t\in I$ and all $r>0$.
This normalization is used in Section~\ref{Main_result}. Indeed,  the coefficients of the enhanced Gr\"onwall inequality of Proposition~\ref{Gronwall} are required to be nonnegative, while the one-sided Lipschitz condition itself allows $L_{F}^{r}$ to take negative values (as in the dissipative case).
\item Assumption \ref{HOSL} is strictly weaker than the Lipschitz continuity of $F(t,\cdot)$ with respect to the Hausdorff distance. Indeed, if for all $r>0$ there exists an integrable function $\kappa_r\colon I\to \mathbb{R}_+$ such that
\begin{align*}
\operatorname{Haus}(F(t,x),F(t,y))\leq \kappa_r(t)\Vert x-y\Vert \quad \textrm{ for a.e. } t\in I \textrm{ and } \forall x,y\in r\mathbb{B},
\end{align*}
then, given $v\in F(t,x)$, the projection $w:=\operatorname{proj}_{F(t,y)}(v)$ is well defined, because $F(t,y)$ is nonempty, closed and convex, and satisfies $\Vert v-w\Vert=d(v,F(t,y))\leq \kappa_r(t)\Vert x-y\Vert$. The Cauchy--Schwarz inequality then yields $\langle x-y,v-w\rangle\leq \kappa_r(t)\Vert x-y\Vert^{2}$, so that \ref{HOSL} holds with $L_{F}^{r}=\kappa_r$. The converse fails, as shown in Proposition~\ref{prop:fishery}.
\end{enumerate}
\end{remark}

\section{Reduction of Volterra Sweeping Processes}\label{Sweeping-sec}
In this section, we provide a reduction result for Volterra sweeping processes:
{\small
\begin{equation}\label{Sweeping-Dif}
\left\{
\begin{aligned}
\dot{x}(t)&\in -N_{C(t)}(x(t))+f_1(t,x(t))+\int_0^t f_2(t,s,x(s))\, ds\\
&+F(t,x(t) + h(t)) + g(t)\mathbb{B} & \textrm{ a.e. } t\in I,\\
x(0)&=x_0\in C(0),
\end{aligned}
\right.
\end{equation}}
where $C\colon I \tto \H$ is a set-valued map satisfying \ref{HC},  $g\in L^1(I;\mathbb{R}_{+})$ and $h\in L^{\infty}(I;\H)$ are given perturbations. The choice $h\equiv 0$ and $g\equiv 0$ gives back the Volterra sweeping process \eqref{Sweeping-Dif1}. We will prove that the above differential inclusion is related to the following unconstrained differential inclusion:
{\small\begin{equation}\label{Reduced}
\left\{
\begin{aligned}
\dot{x}(t)\in& -(L_{C} + \nu(t, h, g))\partial d_{C(t)}(x(t))+f_1(t,x(t))+\int_{0}^t f_2(t,s,x(s))\, ds \\
&+ F(t,x(t) + h(t)) + g(t)\mathbb{B} \qquad\qquad\qquad  \textrm{ a.e. }  t\in I,\\
x(0)=&x_0,
\end{aligned}
\right.
\end{equation}}
 where $L_C$ is given by assumption \ref{HC} and $\nu(\cdot, h, g)$ is defined as
{\small 
\begin{equation*}
\nu(t, h, g) := 2\left(c(t) + \gamma(t)\right)(\theta(t, h) + \omega(t, g)) + 2 \left(\gamma(t) + m(t) + c(t)\Vert h(t)\Vert\right) + 2g(t),
\end{equation*}}
and {\small 
\begin{equation}\label{eta-eq}
\begin{aligned}
\gamma(t)&:=a_{1}(t)+\int_0^t a_{2}(t,s)\, ds,\\
\theta(t, h)&:=r_{0} \exp(2\int_0^t (c(s) + \gamma(s)) ds)+\int_0^t \varepsilon(s, h)\exp\left(2\int_s^t (c(\tau) + \gamma(\tau)) d\tau\right) ds,\\
\varepsilon(t, h) &:= L_{C} + 2 \left(\gamma(t) + m(t) + c(t)\Vert h(t)\Vert\right),\\
\omega(t, g) &:= 2\int_0^t g(s)\exp\left(2\int_s^t (c(\tau) + \gamma(\tau)) d\tau\right) ds.
\end{aligned}
\end{equation}}
Here $r_{0}\geq 0$ is any constant with $\Vert x_{0}\Vert \leq r_{0}$. The freedom in the choice of $r_{0}$ is needed in Section~\ref{Main_result}, where the same functions $\theta(\cdot,h)$, $\omega(\cdot,g)$ and $\nu(\cdot,h,g)$ have to bound simultaneously two trajectories issued from two different initial conditions.

The following elementary consequence of \eqref{eta-eq} explains the definition of $\nu(\cdot,h,g)$ and will be used repeatedly.

\begin{remark}\label{rem:theta-omega}
Let $g\in L^1(I;\mathbb{R}_{+})$ and $h\in L^{\infty}(I;\H)$, and set
\begin{align*}
R(t,h,g):=\theta(t,h)+\omega(t,g), \qquad t\in I.
\end{align*}
By Remark~\ref{rem:superposition}, the function $\gamma$ is nonnegative and integrable on $I$. Hence, so are $\varepsilon(\cdot,h)$ and $\nu(\cdot,h,g)$, and all the functions in \eqref{eta-eq} are well defined and absolutely continuous. Moreover, $R(\cdot,h,g)$ is the unique absolutely continuous solution of the linear differential equation:  $R(0,h,g)=r_{0}$ and 
\begin{align*}
\frac{d}{dt}R(t,h,g)=2\left(c(t)+\gamma(t)\right)R(t,h,g)+\varepsilon(t,h)+2g(t) \quad \textrm{ a.e. } t\in I.
\end{align*}
By the very definitions of $\nu(\cdot,h,g)$ and $\varepsilon(\cdot,h)$,
\begin{align}\label{eq:nu-derivative}
\frac{d}{dt}R(t,h,g)=L_{C}+\nu(t,h,g)\quad \textrm{ a.e. } t\in I.
\end{align}
In particular, $R(\cdot,h,g)$ is nonnegative and nondecreasing. Moreover, since $\varepsilon(t,h)\geq \varepsilon(t,0)$ and $g\geq 0$, the comparison principle for linear differential equations gives
\begin{align}\label{eq:R-monotone}
R(t,0,0)\leq R(t,h,g) \quad \forall t\in I, \, \textrm{ and }\, \nu(t,0,0)\leq \nu(t,h,g)\ \textrm{ for a.e. } t\in I.
\end{align}
Identity \eqref{eq:nu-derivative} is the guiding principle behind \eqref{Reduced}. Moreover, as shown in Proposition~\ref{Main_Result_Red} below, $R(\cdot,h,g)$ bounds the norm of any solution of \eqref{Sweeping-Dif}, while $L_{C}+\nu(\cdot,h,g)$ bounds its velocity. The coefficient in front of $\partial d_{C(t)}(\cdot)$ in \eqref{Reduced} is thus large enough to force the trajectories of the reduced inclusion to remain in the moving set.
\end{remark}

We will also use the following Gr\"onwall-type inequality adapted to the Volterra structure of \eqref{Sweeping-Dif}, which follows from the classical Gr\"onwall inequality after passing to the nondecreasing envelope of $u$ (see also \cite[Theorem 3.1]{Vilches2024}).

\begin{lemma}\label{lem:volterra-gronwall}
Assume \ref{Hf}, \ref{Hg} and \ref{H3F}. Let $\lambda>0$, $u_0\geq 0$, $\beta\in L^1(I;\mathbb{R}_{+})$ and let $u\colon I\to \mathbb{R}_{+}$ be a continuous function such that
{\small 
\begin{align}\label{eq:volterra-gronwall}
u(t)\leq u_0+\int_0^t \beta(s)\, ds+\lambda\int_0^t\left(\left(a_1(s)+c(s)\right)u(s)+\int_0^s a_{2}(s,\tau)u(\tau)\, d\tau\right)\, ds \,  \forall t\in I.
\end{align}}
Then, for all $t\in I$, one has $u(t)\leq w(t)$, where
{\small 
\begin{align*}
w(t):=u_0\exp\left(\lambda\int_0^t \left(c(s)+\gamma(s)\right)\, ds\right)+\int_0^t \beta(s)\exp\left(\lambda\int_s^t \left(c(\tau)+\gamma(\tau)\right)\, d\tau\right)\, ds
\end{align*}}
is the solution of $\dot{w}(t)=\lambda\left(c(t)+\gamma(t)\right)w(t)+\beta(t)$ a.e. $t\in I$, with $w(0)=u_0$.
\end{lemma}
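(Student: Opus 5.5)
The plan is to pass from $u$ to its nondecreasing envelope and then apply the classical Gr\"onwall inequality. Set $\bar u(t):=\max_{0\leq s\leq t}u(s)$. This is continuous, nondecreasing and nonnegative, with $u\leq \bar u$. The right-hand side of \eqref{eq:volterra-gronwall} is nondecreasing in $t$, since every integrand is nonnegative. Hence, for $s\leq t$, evaluating the inequality at $s$ and bounding by its value at $t$ shows that the right-hand side at $t$ also dominates $u(s)$. Taking the supremum over $s\in[0,t]$ gives \eqref{eq:volterra-gronwall} with $u$ replaced by $\bar u$ on the left, while the right-hand side still contains $u$. Since $u\leq\bar u$, we may replace $u$ by $\bar u$ on the right as well. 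In the memory term we then use $\int_0^s a_2(s,\tau)\bar u(\tau)\,d\tau\leq \bar u(s)\int_0^s a_2(s,\tau)\,d\tau$, which uses monotonicity of $\bar u$. Combining this with the term $a_1(s)\bar u(s)$, the bracket is bounded by $(c(s)+\gamma(s))\bar u(s)$, with $\gamma$ as in \eqref{eta-eq}.

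This yields the linear integral inequality
\begin{equation*}
\bar u(t)\leq u_0+\int_0^t\beta(s)\,ds+\lambda\int_0^t (c(s)+\gamma(s))\bar u(s)\,ds,\qquad t\in I.
\end{equation*}
The coefficient $c+\gamma$ is nonnegative and integrable by Remark~\ref{rem:superposition}(iii) and \ref{H3F}. The measurability of $\gamma$ and of $s\mapsto a_2(s,\tau)\bar u(\tau)$ follows from Fubini--Tonelli as in that remark. The classical integral form of Gr\"onwall's inequality, with nonnegative integrable kernel and nondecreasing forcing term $u_0+\int_0^t\beta$, then gives $\bar u\leq w$. To be explicit, let $V(t)$ denote the right-hand side above. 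It is absolutely continuous and satisfies $\dot V\leq \lambda(c+\gamma)V+\beta$ a.e. Multiplying by $\exp(-\lambda\int_0^t(c+\gamma))$ and integrating gives $V\leq w$, because $w$ is exactly the variation-of-constants solution of $\dot w=\lambda(c+\gamma)w+\beta$ with $w(0)=u_0$. We conclude that $u\leq\bar u\leq V\leq w$.

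The only delicate point is the first step. There one must check that bounding the supremum of the left side really uses monotonicity of the right-hand side in $t$, rather than any monotonicity of $u$. One must also note that exchanging the inner integral bound requires $a_2\geq0$ and $\bar u$ nondecreasing. Everything else is routine.
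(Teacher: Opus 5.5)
Your proof is correct and follows the paper's argument: pass to the nondecreasing envelope $\max_{s\in[0,t]}u(s)$, use that the right-hand side is nondecreasing in $t$ and that $a_2\geq 0$ to bound the memory term by $(c+\gamma)$ times the envelope, and conclude with the classical Gr\"onwall inequality. The only difference is that you carry out the final Gr\"onwall step explicitly, by applying an integrating factor to $V$, where the paper simply cites the classical inequality.
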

\begin{proof}
Let $\Lambda(t):=\max_{s\in [0,t]}u(s)$, which is continuous and nondecreasing on $I$. Since the right-hand side of \eqref{eq:volterra-gronwall} is nondecreasing with respect to $t$, the inequality \eqref{eq:volterra-gronwall} holds with $u(t)$ replaced by $\Lambda(t)$ on its left-hand side. Moreover, $u(s)\leq \Lambda(s)$ for all $s\in I$ and $u(\tau)\leq \Lambda(\tau)\leq \Lambda(s)$ whenever $0\leq \tau\leq s$, because $\Lambda$ is nondecreasing. Hence, by virtue of the definition of $\gamma$ in \eqref{eta-eq}, for all $t\in I$,
\begin{align*}
\Lambda(t)&\leq u_0+\int_0^t \beta(s)\, ds+\lambda\int_0^t\left(a_1(s)+c(s)+\int_0^s a_{2}(s,\tau)\, d\tau\right)\Lambda(s)\, ds\\
&=u_0+\int_0^t \beta(s)\, ds+\lambda\int_0^t\left(c(s)+\gamma(s)\right)\Lambda(s)\, ds.
\end{align*}
Since $c+\gamma$ is nonnegative and integrable (see Remark~\ref{rem:superposition}) and $t\mapsto u_0+\int_0^t \beta(s)\, ds$ is nondecreasing and absolutely continuous, the classical Gr\"onwall inequality yields  $u(t)\leq \Lambda(t)\leq w(t)$ for all $t\in I$.
\end{proof}

The idea of reducing a sweeping process to an unconstrained differential inclusion governed by the subdifferential of the distance function to the moving set goes back to Haddad, Jourani and Thibault \cite{Haddad2009}. The next result adapts it to the Volterra setting and to the presence of the perturbations $h$ and $g$. We establish the equivalence of the systems \eqref{Reduced} and \eqref{Sweeping-Dif}.
\begin{proposition}\label{Main_Result_Red}
Assume, in addition to \ref{HC}, \ref{HF}, \ref{Hg} and \ref{Hf}, that $x_{0}\in C(0)$, and let $r_{0}\geq \Vert x_{0}\Vert$ be defined in \eqref{eta-eq}. Let $x\colon I \to \H$ be an absolutely continuous function. Then, for every $g\in L^1(I;\mathbb{R}_{+})$ and every $h\in L^{\infty}(I;\H)$, $x(\cdot)$ is a solution of the differential inclusion given by \eqref{Reduced} if and only if it is a solution of \eqref{Sweeping-Dif}. Moreover, in any case,  $x(t)\in C(t)$ for all $t\in I$ and
\begin{equation*}
\begin{aligned}
  \Vert x(t)\Vert &\leq \theta(t, h) + \omega(t, g) & \textrm{ for all } t\in I,\\
  \Vert \dot{x}(t)\Vert &\leq 2 \left(\gamma(t) + c(t)\right) (\theta(t, h) + \omega(t, g)) + \varepsilon(t, h) + 2g(t)& \textrm{ a.e. } t\in I.
\end{aligned}
\end{equation*}
\end{proposition}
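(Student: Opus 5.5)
We prove the two implications separately; the a priori bounds come along the way.

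\emph{Direction \eqref{Sweeping-Dif} $\Rightarrow$ \eqref{Reduced}.} Let $x$ solve \eqref{Sweeping-Dif}. Then $x(t)\in C(t)$ a.e., hence everywhere by continuity of $x$ and the Lipschitz continuity of $C$. By Lemma~\ref{lem:meas-subdiff}(ii), Proposition~\ref{meassum} and Remark~\ref{rem:HF}(iv), we split
\[
\dot x(t)=-\zeta(t)+f_1(t,x(t))+\int_0^t f_2(t,s,x(s))\,ds+v(t)+g(t)b(t)
\]
with measurable $\zeta(t)\in N_{C(t)}(x(t))$, $v(t)\in F(t,x(t)+h(t))$ and $b(t)\in\mathbb{B}$.

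Next we bound $\|\zeta(t)\|$ by the standard argument. At a.e.\ $t$ where $x$ is differentiable, the Hausdorff Lipschitz bound gives $d_{C(t+\delta)}(x(t))\le L_C\delta$. Hence there are points $c_\delta\in C(t+\delta)$ with $x(t+\delta)-c_\delta=\delta\dot x(t)+o(\delta)+O(L_C\delta)$. Pairing with the normal $\zeta(t)/\|\zeta(t)\|$ and using prox-regularity \eqref{eq:prox-reg-def} gives $\|\zeta(t)\|\le L_C+\|\dot x(t)-\zeta(t)\|$.

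Alternatively, we use the hypomonotonicity trick of \cite{Haddad2009}: project $-\zeta$ onto the "drift" part, to obtain $\|\zeta(t)\|\le L_C+\|u(t)\|$, where
\[
u:=f_1+\textstyle\int f_2+v+gb .
\]
Then, by \ref{Hf}, \ref{Hg} and \ref{H3F},
\[
\|u(t)\|\le (a_1(t)+c(t))\|x(t)\|+\int_0^t a_2(t,s)\|x(s)\|\,ds+\gamma(t)+m(t)+c(t)\|h(t)\|+g(t).
\]
Consequently $\|\dot x\|\le L_C+2\|u\|$. Integrating $\|x(t)\|\le r_0+\int_0^t\|\dot x\|$ and applying Lemma~\ref{lem:volterra-gronwall} with $\lambda=2$ and $\beta=\varepsilon(\cdot,h)+2g$ yields $\|x(t)\|\le R(t,h,g)$ by Remark~\ref{rem:theta-omega}. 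Substituting back gives the velocity bound and $\|\zeta(t)\|\le L_C+\nu(t,h,g)$.

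Finally, $\zeta/\|\zeta\|\in N_{C(t)}(x(t))\cap\mathbb{B}=\partial d_{C(t)}(x(t))$ by \eqref{lem:trunc}, and this set is convex and contains $0$. Therefore $\zeta(t)\in (L_C+\nu)\partial d_{C(t)}(x(t))$, so $x$ solves \eqref{Reduced}.

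\emph{Direction \eqref{Reduced} $\Rightarrow$ \eqref{Sweeping-Dif}.} This is the main obstacle, namely proving viability. Write
\[
\dot x=-(L_C+\nu)\xi+u,\qquad \xi(t)\in\partial d_{C(t)}(x(t)),
\]
using Lemma~\ref{lem:meas-subdiff}(i) and Proposition~\ref{meassum}.

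Consider $\varphi(t)=\tfrac12 d^2_{C(t)}(x(t))$. Set $\tau:=\sup\{t: x(s)\in C(s)\ \forall s\le t\}$ and suppose $\tau<T$. Just after $\tau$ we have $d_{C(t)}(x(t))<\rho$, so Corollary~\ref{cor:squared-distance} applies together with the $L_C$-Lipschitz dependence in time. Off $C(t)$, $\partial d_{C(t)}(x(t))$ is the single unit vector $\nabla d$ (Proposition~\ref{Prop2.1}(c),(d)). This gives, a.e.,
\[
\dot\varphi(t)\le d\bigl(L_C+\langle\nabla d,\dot x\rangle\bigr)\le d\bigl(L_C-(L_C+\nu)+\|u\|\bigr).
\]
To close this we need the bound $\|u(t)\|\le\nu(t,h,g)$ for all $t$, while the bound $\|x\|\le R$ is so far known only on $[0,\tau]$.

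To handle this, run a continuity argument on $\|x(t)\|<R(t,h,g)+\epsilon$. Choose the interval small, or replace $r_0$ by $r_0+\epsilon$ and let $\epsilon\to0$. This gives $\|u\|\le\nu+O(\epsilon)$, hence $\dot\varphi\le O(\epsilon)\sqrt{\varphi}$. Together with $\varphi(\tau)=0$ this forces $\varphi\equiv0$ on a right neighbourhood, which contradicts the choice of $\tau$.

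Once $x(t)\in C(t)$ for all $t$, we have $(L_C+\nu)\partial d_{C(t)}(x(t))\subset N_{C(t)}(x(t))$, so $x$ solves \eqref{Sweeping-Dif}. The bounds then follow from the first part.
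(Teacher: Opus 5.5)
\paragraph{A gap in the viability argument.}
Your direction \eqref{Sweeping-Dif}$\Rightarrow$\eqref{Reduced} is essentially the paper's Step~5. It is fine once the estimate $\|\zeta\|\le L_C+\|u\|$ is taken from \cite{MR2179241}, as the paper does. Your own sketch of that estimate is garbled, though:
\begin{itemize}
\item it pairs the normal to $C(t)$ at $x(t)$ with points of $C(t+\delta)$;
\item it writes $\|\dot x-\zeta\|$ where $\|\dot x+\zeta\|=\|u\|$ is meant;
\item a forward difference only gives the useless bound $\langle\zeta/\|\zeta\|,\dot x\rangle\le L_C$. You need a point of $C(t)$ within $L_C\delta$ of $x(t-\delta)$.
\end{itemize}

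The genuine gap is in closing the viability argument for \eqref{Reduced}$\Rightarrow$\eqref{Sweeping-Dif}. From $\|x\|<R+\epsilon$ near $\tau$ you get $\dot\varphi\le\epsilon\,(c+\gamma)\sqrt{2\varphi}$. This, together with $\varphi(\tau)=0$, does \emph{not} force $\varphi\equiv0$, because the right-hand side is not Lipschitz in $\varphi$ at $0$. For example, $\varphi(t)=a^{2}(t-\tau)^{2}/4$ solves $\dot\varphi=a\sqrt{\varphi}$.

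All you actually obtain is $d_{C(t)}(x(t))\le\epsilon\int_\tau^t(c+\gamma)$ on an interval $[\tau,\tau+\delta_\epsilon]$. Its length depends on $\epsilon$ and may shrink to $0$, so letting $\epsilon\to0$ proves nothing.

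Replacing $r_0$ by $r_0+\epsilon$ does not help either:
\begin{itemize}
\item If it is meant to change $\nu$, then $x$ no longer solves the modified inclusion, since off $C(t)$ the set $\partial d_{C(t)}(x(t))$ is a single unit vector.
\item If it only changes the comparison function, you are back to the same $\epsilon$-loss.
\end{itemize}

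\paragraph{The missing idea.}
You need an a priori bound for solutions of \eqref{Reduced} on all of $I$ that does not presuppose viability, combined with the slack built into $\nu$.

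Set $\pi:=\gamma+m+c\|h\|+g$. Since $\|\xi\|\le1$, you have $\|\dot x\|\le L_C+\nu+\|u\|$ and $\|u\|\le\pi+(a_1+c)\|x\|+\int_0^t a_2(t,s)\|x(s)\|\,ds$.

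Lemma~\ref{lem:volterra-gronwall} with $\lambda=1$ then gives $\|x\|\le q$, where $\dot q=(c+\gamma)q+L_C+\nu+\pi$ and $q(0)=r_0$. The difference $z:=2R-q$ satisfies $\dot z=(c+\gamma)z+L_C+\pi$ with $z(0)=r_0\ge0$. Hence $\|x\|\le 2R$ on $I$.

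Because $\nu=2(c+\gamma)R+2\pi$ carries exactly this factor $2$, the crude bound already gives $\|u\|\le\pi+2(c+\gamma)R=\nu-\pi$. Therefore $\dot\varphi\le-\pi\,d_{C(t)}(x(t))\le0$ on the whole interval where $d_{C(t)}(x(t))<\rho$. Viability follows with no $\epsilon$ and no contradiction. The sharp bound $\|x\|\le R$ is recovered afterwards from the sweeping direction.

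Noticing this slack would also repair your local argument when $R(\tau)>0$: take $\epsilon\le R(\tau)$, so that $\|u\|\le\nu$ outright. It does not cover the degenerate case $R(\tau)=0$ (e.g.\ $\tau=0$, $x_0=0$, $r_0=0$). There you still need a bound like $\|x\|\le 2R$ past $\tau$.
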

\begin{proof}
Throughout the proof we abbreviate $R(t):=R(t,h,g)=\theta(t,h)+\omega(t,g)$ and $\nu(t):=\nu(t,h,g)$, and we use Remark~\ref{rem:theta-omega}. In particular, $R$ is nondecreasing, $R(0)=r_{0}\geq \Vert x_0\Vert$ and $\dot{R}=L_{C}+\nu$ a.e. on $I$. We also set, for a.e. $t\in I$,
\begin{equation*}
\pi(t):=\gamma(t)+m(t)+c(t)\Vert h(t)\Vert+g(t)\geq 0,
\end{equation*}
so that $\varepsilon(t,h)=L_{C}+2\left(\pi(t)-g(t)\right)$ and $\nu(t)=2\left(c(t)+\gamma(t)\right)R(t)+2\pi(t)$.

\noindent \textbf{Step 1 (measurable selections).} Assume that $x(\cdot)$ solves \eqref{Reduced}. Since $x$ and $h$ are measurable, Lemma~\ref{lem:meas-subdiff}(i) shows that $\Gamma_{1}(t):=-(L_{C}+\nu(t))\partial d_{C(t)}(x(t))$ is a measurable set-valued map with nonempty closed values, while Lemma~\ref{Lemma_exis} shows that $t\rightrightarrows F(t,x(t)+h(t))$ is measurable with nonempty closed values. Hence so is $\Gamma_{2}(t):=F(t,x(t)+h(t))+g(t)\mathbb{B}$, whose graph is therefore $\mathcal{I}\otimes\mathcal{B}(\H)$-measurable. By Remark~\ref{rem:superposition}, the function
\begin{align*}
u(t):=\dot{x}(t)-f_{1}(t,x(t))-\int_{0}^{t}f_{2}(t,s,x(s))\, ds
\end{align*}
is a measurable selection of $\Gamma_{1}+\Gamma_{2}$. Proposition~\ref{meassum}, applied first to the pair $(\Gamma_{1},\Gamma_{2})$ and then to the pair $\bigl(t\rightrightarrows F(t,x(t)+h(t)),\ t\rightrightarrows g(t)\mathbb{B}\bigr)$, provides measurable selections $d(\cdot)$, $f(\cdot)$ and $b(\cdot)$ of $t\rightrightarrows \partial d_{C(t)}(x(t))$, $t\rightrightarrows F(t,x(t)+h(t))$ and $t\rightrightarrows \mathbb{B}$, respectively, such that
\begin{align}\label{eq:decomp}
\dot{x}(t)=-(L_C+\nu(t))d(t)+\ell(t) \quad \textrm{ a.e. } t\in I,\\
\textrm{ where } \quad \ell(t):=f_1(t,x(t))+\int_0^t f_2(t,s,x(s))\, ds + f(t) + g(t)b(t).\nonumber
\end{align}
(When $L_{C}+\nu(t)=0$ we simply set $d(t):=0\in \partial d_{C(t)}(x(t))$ if $x(t)\in C(t)$, and we pick any measurable selection otherwise. Both choices are compatible with \eqref{eq:decomp}.)

\noindent \textbf{Step 2 (a priori bound for \eqref{Reduced}).} One has $\partial d_{C(t)}(y)\subset \mathbb{B}$ for every $y\in \H$, so that $\Vert d(t)\Vert \leq 1$. Hence, by \ref{Hf}(c), \ref{Hg}(c) and \ref{H3F}, for a.e. $t\in I$,
\begin{align}\label{eq:ell-bound}
\Vert \ell(t)\Vert \leq \pi(t)+\left(a_{1}(t)+c(t)\right)\Vert x(t)\Vert +\int_0^t a_{2}(t,s)\Vert x(s)\Vert\, ds,
\end{align}
where we have used that $a_1(t)+\int_0^t a_2(t,s)\, ds=\gamma(t)$ and that $\Vert x(t)+h(t)\Vert \leq \Vert x(t)\Vert+\Vert h(t)\Vert$, and therefore $\Vert \dot{x}(t)\Vert \leq L_{C}+\nu(t)+\Vert \ell(t)\Vert$. Integrating on $[0,t]$ and using that $x(0)=x_0$ and $\Vert x_{0}\Vert \leq r_{0}$, we obtain, for all $t\in I$,
\begin{align*}
\Vert x(t)\Vert &\leq r_{0}+\int_0^t \left(L_{C}+\nu(s)+\pi(s)\right)\, ds\\
&\quad +\int_0^t\left(\left(a_{1}(s)+c(s)\right)\Vert x(s)\Vert+\int_0^s a_{2}(s,\tau)\Vert x(\tau)\Vert\, d\tau\right)\, ds.
\end{align*}
Lemma~\ref{lem:volterra-gronwall}, used with $\lambda=1$, $u=\Vert x(\cdot)\Vert$, $u_0=r_{0}$ and $\beta=L_{C}+\nu+\pi$, yields
\begin{align}\label{eq:apriori-q}
\Vert x(t)\Vert \leq q(t)\quad \textrm{ for all } t\in I,
\end{align}
where $q$ is the unique absolutely continuous solution of $\dot{q}=\left(c+\gamma\right)q+L_{C}+\nu+\pi$ on $I$ with $q(0)=r_{0}$. We claim that
\begin{align}\label{eq:apriori-2R}
q(t)\leq 2R(t) \quad \textrm{ for all } t\in I.
\end{align}
Indeed, set $z:=2R-q$, which is absolutely continuous with $z(0)=2r_{0}-r_{0}=r_{0}\geq 0$. By \eqref{eq:nu-derivative} and the identity $\nu=2(c+\gamma)R+2\pi$, we get, for a.e. $t\in I$,
\begin{align*}
\dot{z}(t)&=2\left(L_{C}+\nu(t)\right)-\left(c(t)+\gamma(t)\right)q(t)-\left(L_{C}+\nu(t)+\pi(t)\right)\\
&=L_{C}+\nu(t)-\left(c(t)+\gamma(t)\right)q(t)-\pi(t)\\
&=\left(c(t)+\gamma(t)\right)\left(2R(t)-q(t)\right)+L_{C}+\pi(t)\\
&=\left(c(t)+\gamma(t)\right)z(t)+L_{C}+\pi(t).
\end{align*}
Since $z(0)\geq 0$ and $L_{C}+\pi\geq 0$, the variation of constants formula gives
{ \small 
\begin{align*}
z(t)=z(0)\exp\left(\int_0^t \left(c+\gamma\right)\right)+\int_0^t \left(L_{C}+\pi(s)\right)\exp\left(\int_s^t \left(c+\gamma\right)\right)\, ds\geq 0 \quad  \forall t\in I,
\end{align*}}
which proves \eqref{eq:apriori-2R}. Combining \eqref{eq:apriori-q} and \eqref{eq:apriori-2R}, we conclude that
\begin{align}\label{eq:apriori-final}
\Vert x(t)\Vert \leq 2R(t)=2\left(\theta(t,h)+\omega(t,g)\right)\quad \textrm{ for all } t\in I.
\end{align}
Let us stress that the sharper bound $\Vert x(t)\Vert \leq R(t)$ is not available at this stage. It will be obtained a posteriori, in Step~5.

\noindent \textbf{Step 3 (the trajectory remains in the moving set).} Now, let us consider the function $\varphi(t):=\frac{1}{2}d_{C(t)}^2 (x(t))$ defined on the interval $[0,t^{*}]$, where $t^{*}:=\inf \{t\in I \colon \varphi(t)\geq \frac{\rho^2}{2}\}$ with the convention $\inf \emptyset:=T$ (so that $t^{*}=T$ when $\rho=+\infty$). The function $t\mapsto d_{C(t)}(x(t))$ is absolutely continuous, since \ref{HC} and the $1$-Lipschitz continuity of the distance function give
\begin{align*}
\left|d_{C(t)}(x(t))-d_{C(s)}(x(s))\right|\leq \Vert x(t)-x(s)\Vert+L_{C}|t-s| \quad \textrm{ for all } s,t\in I,
\end{align*}
and hence so is $\varphi$. Since $x(0)=x_0\in C(0)$, it is clear that $t^{*}>0$, and $d_{C(t)}(x(t))<\rho$ for all $t\in [0,t^{*}[$. Hence, for a.e. $t\in [0,t^{*}[$, one has
\begin{equation*}
   \begin{aligned}
    \dot{\varphi}(t)&\leq L_C d_{C(t)}(x(t))+\langle x(t)-\operatorname{proj}_{C(t)}(x(t)),\dot{x}(t)\rangle\\
    &= L_C d_{C(t)}(x(t)) -(L_C+\nu(t))\langle x(t)-\operatorname{proj}_{C(t)}(x(t)),d(t)\rangle\\
    &+\langle x(t)-\operatorname{proj}_{C(t)}(x(t)),\ell(t)\rangle.
    \end{aligned}
\end{equation*}
The first inequality follows from \eqref{eq:decomp} and from the following two facts. On the one hand, by Corollary~\ref{cor:squared-distance}, the function $\frac{1}{2}d^2_{C(t)}(\cdot)$ is differentiable on the open set $\{y\in \H\colon d_{C(t)}(y)<\rho\}$. On the other hand, for $\delta>0$, assumption \ref{HC} gives $d_{C(t+\delta)}(y)\leq d_{C(t)}(y)+L_{C}\delta$ for every $y\in \H$, whence
{\small 
\begin{align*}
\varphi(t+\delta)-\varphi(t)\leq \frac{1}{2}\left(d^2_{C(t)}(x(t+\delta))-d^2_{C(t)}(x(t))\right)+L_{C}\delta\, d_{C(t)}(x(t+\delta))+\frac{L_{C}^{2}\delta^{2}}{2}.
\end{align*}}
Dividing by $\delta>0$ and letting $\delta\downarrow 0$ yields the announced estimate at every point $t\in [0,t^{*}[$ where $x$ and $\varphi$ are differentiable. Next, we claim that
$$
\langle x(t)-\operatorname{proj}_{C(t)}(x(t)),d(t)\rangle=d_{C(t)}(x(t)) \quad \textrm{ for all } t\in [0,t^{*}[.
$$
Indeed, if $d_{C(t)}(x(t))=0$, then $x(t)=\operatorname{proj}_{C(t)}(x(t))$ and both sides vanish, while if $0<d_{C(t)}(x(t))<\rho$, then Proposition~\ref{Prop2.1}(c) and (d) imply that 
$$
\partial d_{C(t)}(x(t))=\frac{x(t)-\operatorname{proj}_{C(t)}(x(t))}{d_{C(t)}(x(t))},
$$
 so that the claim follows from $\Vert x(t)-\operatorname{proj}_{C(t)}(x(t))\Vert=d_{C(t)}(x(t))$.

Therefore, $\Vert x(t)-\operatorname{proj}_{C(t)}(x(t))\Vert=d_{C(t)}(x(t))$ and the estimates \eqref{eq:ell-bound} and \eqref{eq:apriori-final} together with the monotonicity of $R$, we obtain, for a.e. $t\in [0,t^{*}[$,
\begin{align*}
\Vert \ell(t)\Vert &\leq \pi(t)+\left(a_{1}(t)+c(t)\right)2R(t)+\int_0^t a_{2}(t,s)2R(s)\, ds\\
&\leq \pi(t)+2\left(\gamma(t)+c(t)\right)R(t)=\nu(t)-\pi(t),
\end{align*}
and consequently
\begin{align*}
\dot{\varphi}(t)&\leq \left[L_{C}-(L_{C}+\nu(t))+\Vert \ell(t)\Vert\right]d_{C(t)}(x(t))\leq -\pi(t)\, d_{C(t)}(x(t))\leq 0.
\end{align*}
Hence $\varphi$ is nonincreasing on $[0,t^{*}[$ and, since $\varphi(0)=0$ and $\varphi\geq 0$, we get $\varphi\equiv 0$ on $[0,t^{*}[$. If we had $t^{*}<T$, then $\rho<+\infty$ and, by the definition of $t^{*}$, there would exist a sequence $t_{n}\downarrow t^{*}$ with $\varphi(t_{n})\geq \rho^{2}/2$. The continuity of $\varphi$ would then give both $\varphi(t^{*})\geq \rho^{2}/2>0$ and $\varphi(t^{*})=\lim_{t\uparrow t^{*}}\varphi(t)=0$, a contradiction. Thus $t^{*}=T$ and $\varphi\equiv 0$ on $I$, i.e., $ x(t)\in C(t)$ for all $t\in I$.

\noindent \textbf{Step 4 (from \eqref{Reduced} to \eqref{Sweeping-Dif}).} Since $x(t)\in C(t)$ for all $t\in I$, \eqref{lem:trunc} gives
\begin{align*}
    \partial d_{C(t)}(x(t))=N_{C(t)}(x(t))\cap \mathbb{B} \textrm{ for all } t\in I.
\end{align*}
Hence, we deduce that $-(L_{C}+\nu(t))\partial d_{C(t)}(x(t))\subset -N_{C(t)}(x(t))$ for a.e. $t\in I$. Hence, $x(\cdot)$ is a solution of \eqref{Sweeping-Dif}, and the estimates follow from Step~5 below.

\noindent \textbf{Step 5 (from \eqref{Sweeping-Dif} to \eqref{Reduced}, and the estimates).} Reciprocally, assume that $x(\cdot)$ solves \eqref{Sweeping-Dif}. Since $N_{C(t)}(y)=\emptyset$ whenever $y\notin C(t)$, the inclusion in \eqref{Sweeping-Dif} forces $x(t)\in C(t)$ for a.e. $t\in I$, hence for all $t\in I$ by the continuity of $x$, the closedness of the values of $C$ and \ref{HC}. Thus, by Lemma~\ref{lem:meas-subdiff}, both $t\rightrightarrows N_{C(t)}(x(t))$ and $t\rightrightarrows \partial d_{C(t)}(x(t))$ are measurable with nonempty closed values, and, arguing exactly as in Step~1 (with $\Gamma_{1}(t):=-N_{C(t)}(x(t))$), Proposition~\ref{meassum} provides  selections $f(\cdot)$ of $t\rightrightarrows F(t,x(t) + h(t))$ and $b(\cdot)$ of $t\rightrightarrows \mathbb{B}$ such that
\begin{align*}
\dot{x}(t)\in -N_{C(t)}(x(t)) + \ell(t)\quad \textrm{ a.e. } t\in I,
\end{align*}
where $\ell(t):=f_1(t,x(t))+\int_0^t f_2(t,s,x(s))\, ds + f(t) + g(t) b(t)$. Then, according  to \cite[Proposition~1]{MR2179241}, one has
\begin{align*}
\Vert \dot{x}(t) - \ell(t)\Vert \leq L_C+\Vert \ell(t)\Vert \quad \textrm{ a.e. } t\in I.
\end{align*}
Then, by  \ref{Hf}, \ref{Hg} and \ref{HF}, the bound \eqref{eq:ell-bound} holds and yields, for a.e. $t\in I$,
\begin{equation*}
\begin{aligned}
\Vert \dot{x}(t)\Vert &\leq L_C + 2\Vert \ell(t)\Vert \\
&\leq L_{C}+2\pi(t)+2\left(\left(a_{1}(t)+c(t)\right)\Vert x(t)\Vert+\int_0^t a_{2}(t,s)\Vert x(s)\Vert\, ds\right)\\
&{}=\varepsilon(t,h)+2g(t)+2\left(\left(a_{1}(t)+c(t)\right)\Vert x(t)\Vert+\int_0^t a_{2}(t,s)\Vert x(s)\Vert\, ds\right).
\end{aligned}
\end{equation*}
Integrating on $[0,t]$, using that $\Vert x_{0}\Vert \leq r_{0}$ and applying Lemma~\ref{lem:volterra-gronwall} with $\lambda=2$, $u=\Vert x(\cdot)\Vert$, $u_0=r_{0}$ and $\beta=\varepsilon(\cdot,h)+2g$, we obtain from Remark~\ref{rem:theta-omega} that
\begin{align*}
    \Vert x(t)\Vert \leq R(t)=\theta(t, h) + \omega(t, g)\quad \textrm{ for all } t\in I,
\end{align*}
which is the first estimate of the statement. Finally, using \eqref{eq:ell-bound}, the bound $\Vert x(s)\Vert \leq R(s)\leq R(t)$ for $s\leq t$ and the definition of $\nu$, we get, for a.e. $t\in I$,
\begin{equation*}
\Vert \dot{x}(t)-\ell(t)\Vert \leq L_C+\Vert \ell(t)\Vert
\leq L_{C}+\pi(t)+\left(\gamma(t)+c(t)\right)R(t)
\leq L_C + \nu(t, h, g),
\end{equation*}
as well as $\Vert \dot{x}(t)\Vert \leq L_{C}+2\Vert \ell(t)\Vert \leq L_{C}+2\pi(t)+2(\gamma(t)+c(t))R(t)=L_{C}+\nu(t,h,g)=2\left(\gamma(t)+c(t)\right)R(t)+\varepsilon(t,h)+2g(t)$, which is the second estimate of the statement.

To conclude, set $\zeta(t):=\ell(t)-\dot{x}(t)\in N_{C(t)}(x(t))$, so that $\Vert \zeta(t)\Vert \leq L_{C}+\nu(t)$ for a.e. $t\in I$. If $L_{C}+\nu(t)>0$, then $\zeta(t)/(L_{C}+\nu(t))\in N_{C(t)}(x(t))\cap \mathbb{B}=\partial d_{C(t)}(x(t))$, by \eqref{lem:trunc}, and hence $\zeta(t)\in (L_{C}+\nu(t))\partial d_{C(t)}(x(t))$. If $L_{C}+\nu(t)=0$, then $\zeta(t)=0\in (L_{C}+\nu(t))\partial d_{C(t)}(x(t))$, because $x(t)\in C(t)$ implies $0\in \partial d_{C(t)}(x(t))$. In both cases,
\begin{align*}
\dot{x}(t)=-\zeta(t)+\ell(t)&\in -(L_{C}+\nu(t,h,g))\partial d_{C(t)}(x(t))+f_1(t,x(t))\\
&\quad +\int_0^t f_2(t,s,x(s))\, ds+F(t,x(t)+h(t))+g(t)\mathbb{B}
\end{align*}
for a.e. $t\in I$, i.e.,  $x(\cdot)$ is a solution of \eqref{Reduced}. The proof is finished.
\end{proof}

\section{Filippov's Theorem for Volterra Sweeping Processes}\label{Main_result}

In the following, we prove the main result of this section. Recall from Remark~\ref{rem:theta-omega} that $R(t,h,g)=\theta(t,h)+\omega(t,g)$ and that $L_{C}+\nu(t,h,g)=\frac{d}{dt}R(t,h,g)$. Throughout this section, $R(\cdot,h,g)$ and $L_{C}+\nu(\cdot,h,g)$ play the role of the state bound and of the velocity bound provided by Proposition~\ref{Main_Result_Red}.

\begin{theorem}\label{Filippov_T} Let $g\in L^1(I;\mathbb{R}_{+})$ and $h\in L^{\infty}(I;\H)$. Assume, in addition to \ref{HC}, \ref{HF},
\ref{Hf}, \ref{Hg} and \ref{HOSL}, that one of the following assumptions holds:
\begin{enumerate}[label=\textnormal{(\roman*)}]
    \item \ref{HC3}
    \item \ref{H4F}.
\end{enumerate}
Let $y(\cdot)$ be a solution  of the following differential inclusion:
\begin{equation}\label{Filippov1}
\left\{
\begin{aligned}
\dot{y}(t)&\in  -N_{C(t)}(y(t))+f_1(t,y(t))+\int_0^t f_2(t,s,y(s))\, ds\\
& + F(t,y(t) + h(t)) +  g(t)\mathbb{B} & \textrm{ a.e. } t\in I,\\
y(0)&=y_0\in C(0).
\end{aligned}
\right.
\end{equation}
Then, for every nonempty compact set $\mathcal{K}\subset C(0)$ and every $x_0\in \operatorname{Proj}_{\mathcal{K}}(y_0)$, and provided that the constant $r_{0}$ of \eqref{eta-eq} satisfies
\begin{align}\label{eq:r0}
r_{0}\geq \max\left\{\Vert x_{0}\Vert, \Vert y_{0}\Vert\right\},
\end{align}
there exists an absolutely continuous solution $x(\cdot)$ of \eqref{Sweeping-Dif1} with $x(0)=x_{0}$,
and the following estimates hold, where $\overline{R}:=R(T,h,g)+\Vert h\Vert_{\infty}$:
\begin{enumerate}
\item[\textnormal{(a)}] For all $t\in I$,
{\small\begin{align*}
\Vert x(t)-y(t)\Vert^{2}\leq &\operatorname{dist}^{2}(y_0, \mathcal{K})\times\exp\left(\int_0^t \left(\eta_{1}(s) + \eta_{2}(s)\right)\, ds\right)\\
   \qquad\quad &+2\int_0^t\left[\mu(s) + \eta_{1}(s)\right]\times\exp\left(\int_s^t \left(\eta_{1}(\tau) + \eta_{2}(\tau)\right)\, d\tau\right)\, ds,
    \end{align*}}
where
\begin{align*}
\mu(t) & := L_{F}^{\overline{R}}(t)\Vert h(t)\Vert^{2}+\left(g(t)+4\left(L_{C}+\nu(t,h,g)\right)\right)\Vert h(t)\Vert,\\
\eta_{1}(t) &:= \sqrt{2}\left(2L_{F}^{\overline{R}}(t)\Vert h(t)\Vert + l_{\overline{R}}^{1}(t)\Vert h(t)\Vert + g(t)\right) + \sqrt{2}\ l_{\overline{R}}^{2}(t)\Vert h(t)\Vert\, t,\\
\eta_{2}(t)&:=  2\left(L_{F}^{\overline{R}}(t) + l_{\overline{R}}^{1}(t) +\frac{L_{C} + \nu(t, h, g)}{\rho}\right) + 2\ l_{\overline{R}}^{2}(t)\, t.
\end{align*}

\item[\textnormal{(b)}] If $h\equiv 0$, one has for all $t\in I$
{\small\begin{align*}
       \Vert x(t)-y(t)\Vert\leq \operatorname{dist}(y_0, \mathcal{K})\times\exp\left(\int_0^t \eta(s)\, ds\right) + \int_0^t g(s)\exp\left(\int_s^t \eta(\tau)\, d\tau\right)\, ds,
    \end{align*}}
where $ \eta(t):= L_{F}^{\overline{R}}(t) + l_{\overline{R}}^{1}(t) +\frac{L_{C} + \nu(t, 0, g)}{\rho} + l_{\overline{R}}^{2}(t)\, t$.
\end{enumerate}
\end{theorem}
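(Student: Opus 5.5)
The plan follows the three ingredients announced in the introduction: reduce to an unconstrained inclusion, build an OSL-matched auxiliary multifunction, and close the estimate with Proposition~\ref{Gronwall}.

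\textbf{Step 1 (reduction and a priori bounds).} Because $r_{0}\geq\max\{\Vert x_0\Vert,\Vert y_0\Vert\}$, Proposition~\ref{Main_Result_Red} applies to $y$ with data $(h,g)$. It gives $y(t)\in C(t)$, $\Vert y(t)\Vert\leq R(t,h,g)$, and that $y$ solves \eqref{Reduced} with coefficient $L_C+\nu(t,h,g)$. It also gives measurable selections $f_y(t)\in F(t,y(t)+h(t))$, $b(t)\in\mathbb{B}$ and $\xi_y(t)\in\partial d_{C(t)}(y(t))$ (Proposition~\ref{meassum}) decomposing $\dot y$. Any solution $x$ of \eqref{Sweeping-Dif1} from $x_0$ obeys $\Vert x\Vert\leq R(\cdot,0,0)\leq R(\cdot,h,g)$ and has velocity bound $L_C+\nu(\cdot,0,0)\leq L_C+\nu(\cdot,h,g)$ by \eqref{eq:R-monotone}. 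Hence both $x$ and $y+h$ stay in $\overline R\mathbb{B}$. Consequently $x$ also solves the reduced inclusion with the common larger coefficient $\kappa(t):=L_C+\nu(t,h,g)$. This uses the last part of the proof of Proposition~\ref{Main_Result_Red}: any normal component of norm at most $\kappa$ lies in $\kappa\,\partial d_{C(t)}$.

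\textbf{Step 2 (auxiliary map and existence).} Let $P$ denote the radial retraction onto $\overline R\mathbb{B}$, so that the local OSL modulus applies. Define
\begin{align*}
S(t,z):=\bigl\{w\in F(t,P(z)) \colon \langle P(z)-y(t)-h(t),\,w-f_y(t)\rangle\leq L_F^{\overline R}(t)\Vert P(z)-y(t)-h(t)\Vert^{2}\bigr\}.
\end{align*}
By \ref{HOSL}, applied with the roles arranged so that for $f_y(t)\in F(t,y+h)$ there is a matching element of $F(t,P(z))$, the set $S(t,z)$ is nonempty. It is closed and convex as the intersection of $F$ with a half-space. Its graph is closed in $\H\times\H_w$, since the half-space condition is continuous in $z$ and weakly closed in $w$. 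Measurability follows from \ref{H1F} together with the measurability of $y,h,f_y$, arguing on graphs as in Remark~\ref{rem:HF}(iv). Linear growth is inherited from \ref{H3F}, and \ref{H4F} is inherited because $S\subset F\circ P$ and $P$ is $1$-Lipschitz. I would then invoke an existence theorem for the reduced inclusion $\dot x\in-\kappa(t)\partial d_{C(t)}(x)+f_1+\int_0^t f_2+S(t,x)$, $x(0)=x_0$, under \ref{HC3} or \ref{H4F}. This is the Volterra/Galerkin existence machinery of \cite{Vilches2024,Pedro-Manuel-Emilio-2024}, which I take as available. By Proposition~\ref{Main_Result_Red}, with $h=g=0$ and $F$ replaced by $S$, the solution stays in $C(t)$ and in $R(\cdot,0,0)\mathbb{B}$, so $P(x)=x$. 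It is therefore a solution of \eqref{Sweeping-Dif1} whose $F$-selection $f_x$ satisfies the half-space inequality a.e.

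\textbf{Step 3 (estimate).} Set $\beta:=\Vert x-y\Vert^{2}$, with $\beta(0)=\operatorname{dist}^2(y_0,\mathcal K)$. Differentiate $\beta$ using the reduced forms of both $x$ and $y$.

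For the normal parts, $-\kappa\langle x-y,\xi_x-\xi_y\rangle\leq(\kappa/\rho)\Vert x-y\Vert^2$ by Proposition~\ref{Prop2.1}(e). This is where the common coefficient matters: on the $y$ side the coefficient is exactly $\kappa$, and on the $x$ side I use the reformulated selection with norm at most $\kappa$.

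For $f_1$ the contribution is $l^1_{\overline R}\Vert x-y\Vert^2$. The Volterra term gives $l^2_{\overline R}(t)\Vert x-y\Vert\int_0^t\Vert x-y\Vert$; this produces the $K_3\sqrt\beta\int\sqrt\beta$ term with $K_4\equiv1$, and hence the factor $t\,l^2$.

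For $F$, write $x-y=(x-y-h)+h$. The OSL half-space gives $L_F\Vert x-y-h\Vert^2$, which expands into $L_F(\beta+2\sqrt\beta\Vert h\Vert+\Vert h\Vert^2)$. The remainder $\langle h,f_x-f_y\rangle$, together with the normal-part cross terms, is bounded by $4\kappa\Vert h\Vert$ using the velocity bounds. The $g$ term gives $g\sqrt\beta$.

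Collecting these yields \eqref{eq:EG3-ineq} with $\varepsilon=2\mu$, $K_1$, $K_2$, $K_3,K_5$ and $K_4=K_6=1$. Proposition~\ref{Gronwall} then gives (a), after matching constants (the $\sqrt2$ comes from working with $\tfrac12\beta$).

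For (b), with $h=0$ the inequality reads $\tfrac{d}{dt}\tfrac12\beta\leq\eta\beta+g\sqrt\beta+l^2\sqrt\beta\int_0^t\sqrt\beta$. I would pass to $u=\sqrt\beta$, regularized as $\sqrt{\beta+\delta}$, to get $\dot u\leq(\eta-t\,l^2)u+l^2\int_0^tu+g$. A monotone-envelope argument as in Lemma~\ref{lem:volterra-gronwall} turns this into the linear Gronwall inequality, and letting $\delta\to0$ gives (b).

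\textbf{Main obstacle.} I expect the main difficulty to be Step 2: verifying weak graph closedness and measurability of $S$, and making sure the existence theorem applies to the truncated map. The bookkeeping of cross terms involving $h$ in Step 3 is secondary.
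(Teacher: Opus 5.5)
Your proposal is correct and follows the paper's proof in all essentials: the auxiliary map $S$ built from a truncated $F$ intersected with the OSL half-space relative to the selection $f_y$, an external existence theorem, removal of the truncation via the bound $\Vert x(t)\Vert\leq R(t,0,0)$ from Proposition~\ref{Main_Result_Red}, and then Proposition~\ref{Gronwall} for (a) and a linear Gr\"onwall argument for (b). Your local variations are all fine: rescaling the normal component of $x$ to the common coefficient $L_C+\nu(t,h,g)$ instead of the paper's split between $\lambda_0$ and $\lambda$, pairing the $f_1,f_2$ terms with $x-y$ so that your inequality is dominated by \eqref{Bound_1}, truncating on $\overline{R}\mathbb{B}$, and regularizing with $\sqrt{\beta+\delta}$. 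One small point: Proposition~\ref{Main_Result_Red} is stated for the coefficient $L_C+\nu(\cdot,0,0)$, not $L_C+\nu(\cdot,h,g)$, so it is cleaner to take existence for the constrained problem with $S$, as the paper does via \cite[Theorem~7]{JNV20252}, and only then rescale to $L_C+\nu(\cdot,h,g)$.
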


\begin{remark}
Since $x_{0}\in \operatorname{Proj}_{\mathcal{K}}(y_{0})$, one has $\Vert x(0)-y(0)\Vert = \operatorname{dist}(y_{0},\mathcal{K})$, which is why the initial distance appears on the right-hand side of both estimates. Condition \eqref{eq:r0} only requires the constant $r_{0}$ of \eqref{eta-eq} to dominate the norms of the two initial conditions, which is all that is used in the proof (Proposition~\ref{Main_Result_Red} is applied to $y(\cdot)$, issued from $y_{0}$, and to $x(\cdot)$, issued from $x_{0}$). It is guaranteed, independently of the choice of $x_{0}\in \operatorname{Proj}_{\mathcal{K}}(y_{0})$, by either of the conditions
\begin{align*}
r_{0}\geq \Vert y_{0}\Vert+\operatorname{dist}(y_{0},\mathcal{K}) \qquad \textrm{ or } \qquad r_{0}\geq \max\left\{\Vert y_{0}\Vert, \max_{z\in \mathcal{K}}\Vert z\Vert\right\},
\end{align*}
the second one being the choice used in Section~\ref{A_Opt_Cont}. Note also that, when $h\equiv 0$ and $g\equiv 0$, one has $\mu\equiv 0$ and $\eta_{1}\equiv 0$, so that $\eta_{2}=2\eta$ and the estimate in \textnormal{(a)} reduces exactly to the square of the estimate in \textnormal{(b)}.
\end{remark}
\begin{proof}
Fix a nonempty compact set $\mathcal{K}\subset C(0)$ and set $x_0\in\operatorname{Proj}_{\mathcal{K}}(y_0)$, so that $\Vert x_{0}-y_{0}\Vert=\operatorname{dist}(y_{0},\mathcal{K})$ and, by \eqref{eq:r0}, $\max\{\Vert x_{0}\Vert,\Vert y_{0}\Vert\}\leq r_{0}$. Since $y$ solves  \eqref{Filippov1}, Proposition \ref{Main_Result_Red} (applied with $y_{0}$ in place of $x_{0}$) ensures that $y(\cdot)$
\begin{enumerate}
    \item [i)] also is a solution of  \eqref{Reduced}, with $y_{0}$ as initial condition, and satisfies $y(t)\in C(t)$ for all $t\in I$.

    \item [ii)] satisfies
\begin{align*}
    \Vert \dot{y}(t)\Vert \leq 2 \left(\gamma(t) + c(t)\right) (\theta(t, h) + \omega(t, g)) + \varepsilon(t, h) + 2 g(t) \quad \textrm{ a.e. } t\in I,
\end{align*}
and $\Vert y(t)\Vert \leq \theta(t, h) + \omega(t, g)= R(t, h, g)$ for all $t\in I$,  where  $\gamma(\cdot)$, $\theta(\cdot, h)$ and $\omega(\cdot, g)$ defined as in \eqref{eta-eq}.
\end{enumerate}
Since $y$ is a solution of \eqref{Reduced}, Lemma~\ref{lem:meas-subdiff} and Proposition \ref{meassum} ensure the existence of  measurable selections  $d_{y}$ and $b$ of the set-valued mappings $t\rightrightarrows \partial d_{C(t)}(y(t))$ and $t\rightrightarrows \mathbb{B}$
such that
\begin{align*}
f(t): = \dot{y}(t) - f_{1}\left(t,y\left(t\right)\right) - \int_{0}^t f_2(t,s,y(s))\, ds + (L_C + \nu(t, h, g)) d_{y}(t) - z(t)\\
\in F(t, y(t) + h(t))\quad \textrm{ for a.e. } t\in I,
\end{align*}
where $z(t)= g(t)b(t)$, and $\gamma(\cdot)$, $\nu(\cdot, h, g)$, $\theta(\cdot, h)$, $\omega(\cdot, g)$ are defined as above.

\begin{claim}{1}
There exists a  solution $x(\cdot)$ to \eqref{Sweeping-Dif1} with $x(0)=x_{0}$.
\end{claim}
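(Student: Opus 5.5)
We construct $x(\cdot)$ as a solution of an auxiliary \emph{unconstrained} Volterra inclusion whose multivalued part is a sub-selection of $F$ that is OSL-matched to the trajectory $y$. Proposition~\ref{Main_Result_Red} then turns this solution into a solution of \eqref{Sweeping-Dif1}.

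\textbf{Step 1: the truncation and the auxiliary map.} Let $P\colon \H\to \overline{R}\mathbb{B}$ be the radial retraction. It is $1$-Lipschitz and equals the identity on $\overline{R}\mathbb{B}$. Since $\Vert y(t)\Vert\leq R(t,h,g)$, we have $\Vert y(t)+h(t)\Vert\leq \overline{R}$. Recall that $f(t)\in F(t,y(t)+h(t))$ is the measurable selection fixed above. Define, for a.e. $t\in I$ and all $u\in\H$,
\begin{align*}
S(t,u):=\Bigl\{w\in F(t,P(u))\colon \bigl\langle y(t)+h(t)-P(u),\, f(t)-w\bigr\rangle \leq L_{F}^{\overline{R}}(t)\Vert y(t)+h(t)-P(u)\Vert^{2}\Bigr\}.
\end{align*}
Applying \ref{HOSL} to the pair of points $y(t)+h(t),\,P(u)\in\overline{R}\mathbb{B}$ with $v=f(t)$ shows that $S(t,u)\neq\emptyset$.

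\textbf{Step 2: properties of $S$.} Each value $S(t,u)$ is the intersection of the convex, weakly compact set $F(t,P(u))$ (Remark~\ref{rem:HF}(ii)) with a closed half-space. Hence it is nonempty, closed, convex and weakly compact.

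For the graph, take $u_n\to u$ strongly and $w_n\rightharpoonup w$ with $w_n\in S(t,u_n)$. Then $P(u_n)\to P(u)$ strongly, so \ref{H2F} gives $w\in F(t,P(u))$. The inner product $\langle y+h-P(u_n), f-w_n\rangle$ converges, being a strong-weak pairing, and the right-hand side of the half-space inequality converges as well. Therefore $\gph S(t,\cdot)$ is closed in $\H\times\H_w$.

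Growth and compactness transfer directly from $F$. We get $\Vert S(t,u)\Vert\leq c(t)\overline{R}+m(t)$ from \ref{H3F}. Under \ref{H4F}, since $S(t,A)\subset F(t,P(A))$ and $P$ is $1$-Lipschitz, $\chi(S(t,A))\leq k_{\overline{R}}(t)\chi(A)$.

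Measurability of $t\rightrightarrows \gph S(t,\cdot)$ follows from Remark~\ref{rem:HF}(iv). The set $\gph S$ is the intersection of the measurable set $\{(t,u,w)\colon (P(u),w)\in\gph F(t,\cdot)\}$ with the sublevel set of a Carath\'eodory function of $(t,u,w)$, since $y$, $h$, $f$ and $L_F^{\overline{R}}$ are measurable. The projection theorem \cite[Theorem~III.30]{Castaing_Valadier-1977} then applies.

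\textbf{Step 3: solving the reduced inclusion.} Consider the unconstrained inclusion
\[
\dot x\in-(L_C+\nu(t,0,0))\partial d_{C(t)}(x)+f_1(t,x)+\int_0^t f_2(t,s,x(s))\,ds+S(t,x),\qquad x(0)=x_0.
\]
We apply the existence theorem for Volterra integro-differential inclusions from \cite{Vilches2024,Pedro-Manuel-Emilio-2024}. Its compactness hypothesis is supplied by one of the two assumptions of the theorem. Under \ref{H4F}, it comes from the $\chi$-estimate on $S$ in Step~2. Under \ref{HC3}, it comes from the fact that the relevant trajectories stay in $C(t)\cap r\mathbb{B}$, which is compact. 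The velocity bounds are then used to obtain equicontinuity.

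\textbf{Step 4: removing the truncation and $S$.} Since $S(t,x)\subset F(t,P(x))$, the solution above is a solution of \eqref{Reduced} with $h=g=0$, up to the truncation $P$. The argument of Proposition~\ref{Main_Result_Red} goes through verbatim, because the truncated right-hand side still obeys \ref{H3F}. It gives $x(t)\in C(t)$ and $\Vert x(t)\Vert\leq R(t,0,0)\leq R(T,h,g)\leq\overline{R}$, using \eqref{eq:R-monotone} and the bound $r_0\geq\Vert x_0\Vert$ from \eqref{eq:r0}. Consequently $P(x(t))=x(t)$ and $S(t,x(t))\subset F(t,x(t))$. Proposition~\ref{Main_Result_Red} then shows that $x$ solves \eqref{Sweeping-Dif1}.

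Along the way we also record the OSL-matched selection
\[
w(t)\in F(t,x(t)),\qquad \langle y(t)+h(t)-x(t),\,f(t)-w(t)\rangle\leq L_F^{\overline{R}}(t)\Vert y(t)+h(t)-x(t)\Vert^2,
\]
which is what the subsequent estimates (a) and (b) require.

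\textbf{Main obstacle.} The main obstacle is Step~3, namely fitting $S$ exactly into the hypotheses of an available existence theorem. This concerns the measurability of the graph, which depends on $t$ through $y$, $h$ and $f$, and, in case (i), converting \ref{HC3} into the compactness required by that theorem. I would first try to verify the hypotheses of \cite[Galerkin-like existence theorem]{Pedro-Manuel-Emilio-2024} literally for $S$, as outlined in Step~2.
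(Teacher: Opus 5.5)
Your auxiliary map $S$, the verification of its properties, and the removal of the truncation via Proposition~\ref{Main_Result_Red} (applied with $S$ in place of $F$) follow the paper's argument. Truncating onto the fixed ball $\overline{R}\mathbb{B}$ instead of the paper's $R(t,0,0)\mathbb{B}$ is harmless, since $R(t,0,0)\leq\overline{R}$.

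The gap is in Step~3, which is where existence is actually decided, and your justification there does not work as stated. You propose to solve the \emph{unconstrained} reduced inclusion with a general existence theorem for integro-differential inclusions. However, the multivalued right-hand side $G$ of that inclusion contains $-(L_C+\nu(t,0,0))\partial d_{C(t)}(x)$, whose values need not be norm-compact in infinite dimensions. For instance, in the fishery model of Section~\ref{subsec:fishery} with $b(t)=0$, one has $\partial d_{C(t)}(0)=-\mathcal{P}\cap\mathbb{B}$. So no estimate $\chi(G(t,A))\leq k(t)\chi(A)$ can hold, not even for $A=\{0\}$, as soon as $L_C+\nu(t,0,0)>0$. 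The $\chi$-estimate on $S$ alone therefore does not supply the compactness hypothesis under \ref{H4F}.

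Under \ref{HC3}, the compactness of $C(t)\cap r\mathbb{B}$ controls \emph{exact} solutions, which stay in $C(t)$ by Proposition~\ref{Main_Result_Red}. It does not control the approximating sequence that an existence proof has to handle.

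What is missing is an existence result for the \emph{constrained} Volterra sweeping process with a multivalued perturbation. Such a result treats the normal cone through prox-regularity and asks compactness only of $C(t)$ or of the perturbation. The paper invokes \cite[Theorem~7]{JNV20252} for \eqref{Problema_S} directly. That theorem needs the properties of $S$ from your Step~2 together with \ref{HC3} or \ref{H4F}, with the growth bound in the form given in the next paragraph. After that, your Step~4 goes through.

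A smaller point concerns Step~2. State the growth bound as $\Vert S(t,u)\Vert\leq c(t)\Vert P(u)\Vert+m(t)\leq c(t)\Vert u\Vert+m(t)$, which holds because the radial retraction does not increase norms. The bound $c(t)\overline{R}+m(t)$ alone is not enough. Proposition~\ref{Main_Result_Red} builds $R$ and $\nu$ from the constants in \ref{H3F}. Only with the same $c,m$ as for $F$ is $L_C+\nu(t,0,0)$ the correct coefficient in your reduced inclusion, and only then does $\Vert x(t)\Vert\leq R(t,0,0)\leq\overline{R}$ follow.

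With the constants $0$ and $c\overline{R}+m$ instead, the resulting a priori bound can exceed $\overline{R}$. For example, take $f_1=f_2=0$, $L_C=m=0$, $h=g=0$, $r_0>0$ and $C:=\int_0^T c>0$. Then at $t=T$ that bound equals $r_0(1+2Ce^{2C})$, which is strictly larger than $r_0e^{2C}=\overline{R}$. In that case the argument showing $P(x(t))=x(t)$ would not close.
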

\begin{claimproof}{1}
Set $\overline{R} := R(T, h, g) + \left\|h\right\|_{\infty}$. Then, by the assumption \ref{HOSL}, there exists a nonnegative integrable function $L_{F}^{\overline{R}}\colon I\to \mathbb{R}_{+}$ (see Remark~\ref{rem:OSL-normalization}) such that, for a.e. $t\in I$, for all $u,w\in \overline{R}\mathbb{B}$ and every $\xi\in F(t,u)$, there exists $\zeta \in F(t,w)$ satisfying
\begin{align*}
    \left\langle u-w, \xi - \zeta\right\rangle\leq L_{F}^{\overline{R}}(t)\left\|u-w \right\|^{2}.
\end{align*}
Write $p_{t}(x):=\operatorname{proj}_{R(t, 0, 0)\mathbb{B}}(x)$. The set-valued mapping $S\colon I\times \mathcal{H}\rightrightarrows \mathcal{H}$ is defined as the set of all $v\in F\bigl(t,p_{t}(x)\bigr)$ satisfying the following inequality
\begin{align*}
   \left\langle (y(t) + h(t)) - p_{t}(x), f(t) - v\right\rangle\leq L_{F}^{\overline{R}}(t)\left\|(y(t) + h(t)) - p_{t}(x) \right\|^{2}.
\end{align*}
The following properties hold for the multifunction $S$:
\begin{enumerate}[label=\textnormal{(\roman*)}]
    \item $S(t,x)$ has nonempty, convex and closed values.
    \item The map $t\rightrightarrows \gph S(t,\cdot)$ is measurable.
    \item For a.e. $t\in I$, $\gph S(t,\cdot)$ is closed in $\mathcal{H}\times \mathcal{H}_w$.
    \item For all $x\in \H$ and a.e. $t\in I$,
        \begin{equation*}
            \begin{aligned}
                \Vert S(t,x)\Vert:=\sup\{\Vert w\Vert \colon w\in S(t,x)\}\leq c(t) \Vert x\Vert +m(t).
            \end{aligned}
        \end{equation*}
    \item $S$ inherits the compactness assumption \ref{H4F} from $F$.
\end{enumerate}

\noindent Indeed, note that $y(t) + h(t)$, $p_{t}(x)\in \overline{R}\mathbb{B}$ since
\begin{align*}
    \Vert y(t) + h(t) \Vert\leq \Vert y(t) \Vert + \Vert h \Vert_{\infty}\leq R(t, h, g) + \Vert h \Vert_{\infty}\leq R(T, h, g) + \Vert h \Vert_{\infty} = \overline{R}.
\end{align*}
Now, due to \eqref{eq:R-monotone} and the monotonicity of $R(\cdot,h,g)$,
\begin{equation*}
\begin{aligned}
    R(t, 0, 0) &= \theta(t, 0) + \omega(t, 0) = \theta(t, 0)\leq \theta(t, h) + \omega(t, g)\\
    &\leq \theta(T, h) + \omega(T, g) = R(T, h, g),
\end{aligned}
\end{equation*}
so that $R(t, 0, 0)\leq R(T, h, g) + \Vert h \Vert_{\infty} = \overline{R}$, and moreover $p_{t}(x)\in R(t, 0, 0)\mathbb{B}\subset \overline{R}\mathbb{B}$.
Then, for $y(t) + h(t)$, $p_{t}(x)\in \overline{R}\mathbb{B}$ and $f(t)\in F(t, y(t) + h(t))$, assumption \ref{HOSL} guarantees that there exists $v\in F\bigl(t,p_{t}(x)\bigr)$ such that
\begin{align*}
\left\langle (y(t) + h(t)) - p_{t}(x), f(t) - v\right\rangle\leq L_{F}^{\overline{R}}(t)\left\|(y(t) + h(t)) - p_{t}(x)\right\|^{2}.
\end{align*}
Therefore, $v\in S(t, x)$ and $S(t, x)\neq \emptyset$. Moreover, $S(t,x)$ is the intersection of the closed convex set $F\bigl(t,p_{t}(x)\bigr)$ with a closed half-space, hence it is convex and closed, which shows (i).

Assertion (iii) follows from \ref{H2F} and the continuity of $p_{t}$: if $x_{n}\to x$ strongly and $v_{n}\rightharpoonup v$ weakly with $v_{n}\in S(t,x_{n})$, then $p_{t}(x_{n})\to p_{t}(x)$ strongly, so that $v\in F\bigl(t,p_{t}(x)\bigr)$ by Remark~\ref{rem:HF}(iii), while the defining inequality passes to the limit because the pairing of a strongly convergent sequence with a weakly convergent one converges.

As for (ii), consider the map $\Phi\colon I\times \H\times\H\to \mathbb{R}$ defined by
\begin{align*}
\Phi(t,x,v):=\left\langle (y(t)+h(t))-p_{t}(x),\, f(t)-v\right\rangle-L_{F}^{\overline{R}}(t)\left\Vert (y(t)+h(t))-p_{t}(x)\right\Vert^{2},
\end{align*}
so that $\gph S=\left\{(t,x,v)\colon \bigl(t,p_{t}(x),v\bigr)\in \gph F\right\}\cap\left\{(t,x,v)\colon \Phi(t,x,v)\leq 0\right\}$, where $\gph F$ is as in Remark~\ref{rem:HF}(iv). The map $(r,x)\mapsto \operatorname{proj}_{r\mathbb{B}}(x)$ is continuous on $\mathbb{R}_{+}\times \H$ and $t\mapsto R(t,0,0)$ is continuous, so $(t,x)\mapsto p_{t}(x)$ is a Carath\'eodory map and therefore $\mathcal{I}\otimes\mathcal{B}(\H)$-measurable. Consequently $(t,x,v)\mapsto (t,p_{t}(x),v)$ is measurable and the first set above is $\mathcal{I}\otimes\mathcal{B}(\H\times\H)$-measurable by Remark~\ref{rem:HF}(iv). Since $y$, $h$, $f$ and $L_{F}^{\overline{R}}$ are measurable and $\Phi(t,\cdot,\cdot)$ is continuous, $\Phi$ is $\mathcal{I}\otimes\mathcal{B}(\H\times \H)$-measurable and the second set is measurable as well. Hence $\gph S$ is $\mathcal{I}\otimes \mathcal{B}(\H\times\H)$-measurable and, by (i) and (iii), $\gph S(t,\cdot)$ is closed in $\H\times \H$ for a.e. $t\in I$. Since $\mathcal{I}$ is complete, the projection theorem \cite[Theorem~III.30]{Castaing_Valadier-1977} yields that $t\rightrightarrows \gph S(t,\cdot)$ is measurable, which is (ii).

As for (v), let $r>0$ and $A\subset r\mathbb{B}$ be nonempty. Since $p_{t}$ is $1$-Lipschitz and $\Vert p_{t}(x)\Vert \leq \Vert x\Vert$, the set $A':=p_{t}(A)$ satisfies $A'\subset r\mathbb{B}$ and $\chi(A')\leq \chi(A)$, whence, using $S(t,A)\subset F(t,A')$,
\begin{align*}
\chi(S(t,A))\leq \chi\left(F(t,A')\right)\leq k_{r}(t)\chi(A')\leq k_{r}(t)\chi(A),
\end{align*}
so that $S$ satisfies \ref{H4F} with the same functions $k_{r}$. Due to \ref{H3F} and the definition of $S(t, x)$, we have
\begin{equation*}
\begin{aligned}
\Vert S(t,x)\Vert \leq \Vert F\bigl(t,p_{t}(x)\bigr)\Vert\leq c(t) \Vert p_{t}(x)\Vert + m(t)\leq c(t) \Vert x\Vert + m(t),
\end{aligned}
\end{equation*}
which proves (iv). Then, by \cite[Theorem~7]{JNV20252}, under either \ref{HC3} or \ref{H4F}, there exists an absolutely continuous solution $x\colon I\to \H$ satisfying, for $x_{0}\in \mathcal{K}$, the differential inclusion
{\small 
\begin{equation}\label{Problema_S}
\left\{
\begin{aligned}
\dot{x}(t) & \in -N_{C(t)}(x(t))+f_1(t,x(t))+\int_0^t f_2(t,s,x(s))\, ds + S(t,x(t)) & \textrm{ a.e. } t\in I,\\
x(0)&=x_0.
\end{aligned}
\right.
\end{equation}}
By (i)-(iv), the set-valued map $S$ satisfies \ref{HF}, so that Proposition~\ref{Main_Result_Red}, applied to \eqref{Problema_S} with $S$ in place of $F$, $h\equiv 0$ and $g\equiv 0$ (recall that $\Vert x_{0}\Vert \leq r_{0}$, and that by (iv) the map $S$ obeys \ref{H3F} with the same functions $c$ and $m$ as $F$, so that the associated bounds are $R(\cdot,0,0)$ and $L_{C}+\nu(\cdot,0,0)$), yields that $x(t)\in C(t)$ for all $t\in I$, that $x(\cdot)$ solves the reduced inclusion, and that
\begin{align*}
    \Vert \dot{x}(t)\Vert & \leq L_{C}+\nu(t, 0, 0)\quad \textrm{ a.e. } t\in I,\\
    \Vert x(t)\Vert & \leq R(t, 0, 0)\quad \textrm{ for all } t\in I,
\end{align*}
which implies that $p_{t}(x(t))=x(t)$ and hence $S(t, x(t))\subset F(t, x(t))$ for a.e. $t\in I$. Thus, $x(\cdot)$ is a solution of \eqref{Sweeping-Dif1}.
\end{claimproof}

\noindent Now, consider $\psi$ and $\varphi$ defined on $I$ by $\psi(t) = \left\|y(t) - x(t) \right\|$ and $\varphi(t) = \frac{1}{2}\left\|y(t) - x(t) \right\|^{2}$. Then the following facts hold:
\begin{enumerate}[label=\textnormal{(\roman*)}]
    \item The functions $\psi(\cdot)$ and $\varphi(\cdot)$ are absolutely continuous.
    \item For every $t\in I$ where $\dot{\varphi}(t)$ and $\dot{\psi}(t)$ exist, $\dot{\varphi}(t) = \psi(t)\dot{\psi}(t)$.
\end{enumerate}
\begin{claim}{2}
For a.e. $t\in I$,
\begin{equation}\label{Bound_1}
\begin{aligned}
\dot{\varphi}(t)\leq &2\left(L_{F}^{\overline{R}}(t) + l_{\overline{R}}^{1}(t) +\frac{L_{C} + \nu(t, h, g)}{\rho}\right) \varphi(t)\\
  &+\sqrt{2}\left(2L_{F}^{\overline{R}}(t)\Vert h(t)\Vert + l_{\overline{R}}^{1}(t)\Vert h(t)\Vert + g(t)\right)\sqrt{\varphi(t)}\\
  &+ 2\, l_{\overline{R}}^{2}(t)\sqrt{\varphi(t)}\int_{0}^t \sqrt{\varphi(s)}\dd s + \sqrt{2}\, l_{\overline{R}}^{2}(t)\Vert h(t)\Vert\int_{0}^t \sqrt{\varphi(s)}\dd s \\
& + L_{F}^{\overline{R}}(t) \Vert h(t)\Vert^{2}+\left(g(t) + 4\left(L_{C}+\nu(t, h, g)\right)\right)\Vert h(t)\Vert.
\end{aligned}
\end{equation}
\end{claim}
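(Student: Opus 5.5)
The plan is to compute $\dot\varphi(t)=\langle y(t)-x(t),\dot y(t)-\dot x(t)\rangle$ at every $t$ where both $x$ and $y$ are differentiable and both inclusions hold, and then to estimate this inner product term by term after decomposing the two velocities. For $y$ we use the decomposition fixed before Claim~1:
\[
\dot y=-\kappa(t)d_y+f_1(t,y)+\int_0^t f_2(t,s,y(s))\dd s+f(t)+z(t),\qquad \kappa(t):=L_C+\nu(t,h,g),
\]
where $d_y(t)\in\partial d_{C(t)}(y(t))=N_{C(t)}(y(t))\cap\mathbb{B}$ and $\Vert z(t)\Vert\le g(t)$.

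For $x$ we use Claim~1 and \eqref{Problema_S}, together with Proposition~\ref{Main_Result_Red} applied with $S$ in place of $F$ and the argument of its Step~5. This gives measurable $\zeta_x(t)\in N_{C(t)}(x(t))$ and $v(t)\in S(t,x(t))$ such that
\[
\dot x=-\zeta_x+f_1(t,x)+\int_0^t f_2(t,s,x(s))\dd s+v(t),\qquad \Vert\zeta_x(t)\Vert\le L_C+\nu(t,0,0)\le\kappa(t),
\]
where the last inequality uses \eqref{eq:R-monotone}. Since $p_t(x(t))=x(t)$, the definition of $S$ yields
\[
\langle (y+h)-x,\,f-v\rangle\le L_F^{\overline R}\Vert y+h-x\Vert^2 .
\]
We also record that $\Vert x(t)\Vert,\Vert y(t)\Vert\le\overline R$ and $\Vert\dot x\Vert,\Vert\dot y\Vert\le\kappa$ a.e., so all the local moduli with radius $\overline R$ apply.

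The terms are then handled as follows.

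\emph{Normal-cone terms.} These contribute $\kappa\langle d_y,x-y\rangle+\langle\zeta_x,y-x\rangle$. Since $x(t),y(t)\in C(t)$ and $C(t)$ is $\rho$-prox-regular, \eqref{eq:prox-reg-def} applied to each normal, with $\Vert\kappa d_y\Vert,\Vert\zeta_x\Vert\le\kappa$, bounds their sum by $\frac{\kappa}{\rho}\Vert x-y\Vert^2=\frac{2\kappa}{\rho}\varphi$.

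\emph{$f_1$ term.} By \ref{Hf}(b) it is at most $l^1_{\overline R}\Vert y-x\Vert^2=2l^1_{\overline R}\varphi$.

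\emph{Memory term.} By \ref{Hg}(b) it is at most
\[
\Vert y-x\Vert\, l^2_{\overline R}(t)\int_0^t\Vert y(s)-x(s)\Vert\dd s=2\,l^2_{\overline R}\sqrt{\varphi(t)}\int_0^t\sqrt{\varphi(s)}\dd s .
\]

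\emph{$z$ term.} It is at most $g\Vert y-x\Vert=\sqrt2\,g\sqrt\varphi$.

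\emph{$F$-term.} This is where the inner perturbation enters. I would write
\[
\langle y-x,f-v\rangle=\langle y+h-x,f-v\rangle-\langle h,f-v\rangle .
\]
The first piece is bounded via the OSL half-space inequality and the expansion $\Vert y-x+h\Vert^2\le 2\varphi+2\sqrt2\Vert h\Vert\sqrt\varphi+\Vert h\Vert^2$. It contributes $2L_F^{\overline R}\varphi+2\sqrt2 L_F^{\overline R}\Vert h\Vert\sqrt\varphi+L_F^{\overline R}\Vert h\Vert^2$, using $L_F^{\overline R}\ge0$ by Remark~\ref{rem:OSL-normalization}.

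The second piece is the main obstacle, since no norm estimate on $f-v$ comes from the OSL condition. I would solve the two decompositions for $f-v$:
\[
f-v=(\dot y-\dot x)+\kappa d_y-\zeta_x-z-\bigl(f_1(t,y)-f_1(t,x)\bigr)-\int_0^t\bigl(f_2(t,s,y)-f_2(t,s,x)\bigr)\dd s .
\]
Then I bound each piece using the velocity bounds $\Vert\dot x\Vert,\Vert\dot y\Vert\le\kappa$, the normal bounds $\Vert\kappa d_y\Vert,\Vert\zeta_x\Vert\le\kappa$, $\Vert z\Vert\le g$, and the Lipschitz moduli. This gives
\[
\Vert h\Vert\,\Vert f-v\Vert\le (4\kappa+g)\Vert h\Vert+\sqrt2\, l^1_{\overline R}\Vert h\Vert\sqrt\varphi+\sqrt2\, l^2_{\overline R}\Vert h\Vert\int_0^t\sqrt\varphi .
\]
Summing all the contributions reproduces exactly the coefficients in \eqref{Bound_1}.

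The only delicate points are the following. First, one must check that $\zeta_x$ has norm at most $\kappa$, which follows from the comparison $\nu(\cdot,0,0)\le\nu(\cdot,h,g)$. Second, all the points involved must lie in $\overline R\mathbb{B}$. Third, the selection $v$ must be measurable, which holds by Proposition~\ref{meassum} and Lemma~\ref{lem:meas-subdiff} as in Step~1 of Proposition~\ref{Main_Result_Red}. All three are already available from the results stated above.
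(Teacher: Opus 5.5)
Your proof is correct and is essentially the paper's argument. It uses the same velocity decompositions, the OSL half-space built into $S$ (with $p_t(x(t))=x(t)$), prox-regularity for the normal parts, the moduli on $\overline{R}\mathbb{B}$, and the velocity bounds coming from $\nu(\cdot,0,0)\le\nu(\cdot,h,g)$. The only difference is bookkeeping: the paper splits globally as $\dot\varphi=\langle (y+h)-x,\dot y-\dot x\rangle+\langle h,\dot x-\dot y\rangle$, whereas you isolate $h$ only in the $F$-term and back-solve for $\Vert f-v\Vert$, which is algebraically the same splitting and yields identical coefficients.
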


\begin{claimproof}{2}
For a.e. $t\in I$, we obtain that
\begin{align*}
    \dot{\varphi}(t) & = \left\langle x(t) - y(t), \dot{x}(t) - \dot{y}(t)\right\rangle\\
     & = \left\langle \left(y(t) + h(t)\right) - x(t), \dot{y}(t) - \dot{x}(t)\right\rangle + \left\langle h(t), \dot{x}(t) - \dot{y}(t)\right\rangle.
\end{align*}

\noindent\emph{Step 1: Estimate of $\left\langle \left(y(t) + h(t)\right) - x(t), \dot{y}(t) - \dot{x}(t)\right\rangle$.}
Since $x$ is a solution of the differential inclusion \eqref{Problema_S}, Lemma~\ref{lem:meas-subdiff} and Proposition~\ref{meassum} provide measurable selections $d_x$ and $v_x$ of the set-valued mappings $t\rightrightarrows \partial d_{C(t)}(x(t))$ and $t\rightrightarrows S(t, x(t))$, respectively, such that
\begin{align*}
\dot{x}(t) = f_1(t,x(t)) + \displaystyle\int_{0}^t f_2(t,s,x(s))\, ds - (L_C+\nu(t, 0, 0))d_{x}(t) + v_{x}(t),
\end{align*}
where we used $p_{t}(x(t)) = x(t)$, which holds because $\Vert x(t)\Vert \leq R(t, 0, 0)$  $\forall t\in I$.

\noindent Writing $\lambda(t):=L_{C}+\nu(t,h,g)$ and $\lambda_{0}(t):=L_{C}+\nu(t,0,0)$, so that $0\leq \lambda_{0}(t)\leq \lambda(t)$ by \eqref{eq:R-monotone}, and subtracting the two decompositions of $\dot{y}(t)$ and $\dot{x}(t)$, we get
{\small 
\begin{equation*}
\begin{aligned}
&\left\langle \left(y(t) + h(t)\right) - x(t), \dot{y}(t) - \dot{x}(t)\right\rangle \\
&= \left\langle \left(y(t) + h(t)\right) - x(t), f(t) - v_{x}(t)\right\rangle + \langle \left(y(t) + h(t)\right) - x(t), f_1(t,y(t))-f_1(t, x(t))\rangle\\
&\quad + \left\langle  \left(y(t) + h(t)\right) - x(t),\int_0^t(f_2(t,s,y(s))-f_2(t,s,x(s)))\, ds\right\rangle\\
& \quad +\left[\lambda_{0}(t)\langle \left(y(t) + h(t)\right) - x(t), d_{x}(t)\rangle - \lambda(t)\langle \left(y(t) + h(t)\right) - x(t), d_{y}(t)\rangle\right]\\
& \quad + \langle \left(y(t) + h(t)\right) - x(t),g(t)b(t)\rangle.
\end{aligned}
\end{equation*}}
\noindent We estimate the five terms separately. We use that $x(t),y(t)\in C(t)$ and that
{\small
\begin{align*}
d_{x}(t)\in \partial d_{C(t)}(x(t))=N^{P}(C(t);x(t))\cap \mathbb{B}, \quad d_{y}(t)\in \partial d_{C(t)}(y(t))=N^{P}(C(t);y(t))\cap \mathbb{B}.
\end{align*}}
Recall also that $\Vert x(t)\Vert\leq \overline{R}$, $\Vert y(t)\Vert \leq \overline{R}$ and $\Vert y(t)+h(t)\Vert \leq \overline{R}$, and set $\Delta(t):=\Vert (y(t)+h(t))-x(t)\Vert \leq \psi(t)+\Vert h(t)\Vert$.
\begin{enumerate}[label=\textnormal{(\arabic*)}]
\item Since $v_{x}(t)\in S(t,x(t))$ and $p_{t}(x(t))=x(t)$, the definition of $S$ gives
\begin{align*}
& \left\langle \left(y(t) + h(t)\right) - x(t), f(t) - v_{x}(t)\right\rangle \leq L_{F}^{\overline{R}}(t)\Delta(t)^{2}\\
&\qquad \qquad \qquad \qquad \qquad \qquad \qquad \leq L_{F}^{\overline{R}}(t)\left(\psi^{2}(t)+2\Vert h(t)\Vert \psi(t)+\Vert h(t)\Vert^{2}\right).
\end{align*}
\item By \ref{Hf}(b) and the Cauchy-Schwarz inequality,
\begin{align*}
\langle \left(y(t) + h(t)\right) - x(t), f_1(t,y(t))-f_1(t, x(t))\rangle &\leq \Delta(t)\, l_{\overline{R}}^{1}(t)\psi(t)\\
&\leq l_{\overline{R}}^{1}(t)\left(\psi^{2}(t)+\Vert h(t)\Vert \psi(t)\right).
\end{align*}
\item By \ref{Hg}(b) and the Cauchy-Schwarz inequality,
\begin{align*}
&\langle  \left(y(t) + h(t)\right) - x(t),\int_0^t(f_2(t,s,y(s))-f_2(t,s,x(s)))\, ds \rangle\\
&\qquad \qquad \qquad \qquad \qquad \qquad\qquad \leq  l_{\overline{R}}^{2}(t)\left(\int_0^t \psi(s)\, ds\right) \left(\psi(t)+\Vert h(t)\Vert\right).
\end{align*}
\item For the normal terms, we write
\begin{align*}
&\lambda_{0}(t)\langle \left(y(t) + h(t)\right) - x(t), d_{x}(t)\rangle - \lambda(t)\langle \left(y(t) + h(t)\right) - x(t), d_{y}(t)\rangle\\
&\qquad\qquad\qquad\qquad =\lambda_{0}(t)\left\langle \left(y(t) + h(t)\right) - x(t), d_{x}(t)-d_{y}(t)\right\rangle\\
&\qquad\qquad\qquad\qquad\quad -\left(\lambda(t)-\lambda_{0}(t)\right)\left\langle \left(y(t) + h(t)\right) - x(t), d_{y}(t)\right\rangle.
\end{align*}
On the one hand, the $\frac{1}{\rho}$-hypomonotonicity of $N^{P}(C(t);\cdot)\cap \mathbb{B}$ (Proposition~\ref{Prop2.1}(e)), applied at the pair $(x(t),y(t))\in C(t)\times C(t)$, gives $\langle d_{x}(t)-d_{y}(t),x(t)-y(t)\rangle \geq -\frac{1}{\rho}\psi^{2}(t)$, so that, since $\Vert d_{x}(t)-d_{y}(t)\Vert \leq 2$,
\begin{align*}
 \langle \left(y(t) + h(t)\right) &- x(t), d_{x}(t)-d_{y}(t) \rangle \\
& =-\left\langle x(t)-y(t),d_{x}(t)-d_{y}(t)\right\rangle+\left\langle h(t),d_{x}(t)-d_{y}(t)\right\rangle\\
&\leq \frac{1}{\rho}\psi^{2}(t)+2\Vert h(t)\Vert.
\end{align*}
On the other hand, since $d_{y}(t)\in N^{P}_{C(t)}(y(t))$, $\Vert d_{y}(t)\Vert \leq 1$ and $x(t)\in C(t)$, inequality \eqref{eq:prox-reg-def} applied to $C(t)$ yields $\langle d_{y}(t),x(t)-y(t)\rangle \leq \frac{1}{2\rho}\psi^{2}(t)$, whence
\begin{equation*}
\begin{aligned}
-\left\langle \left(y(t) + h(t)\right) - x(t), d_{y}(t)\right\rangle&=\left\langle x(t)-y(t),d_{y}(t)\right\rangle-\left\langle h(t),d_{y}(t)\right\rangle\\
&\leq \frac{1}{2\rho}\psi^{2}(t)+\Vert h(t)\Vert.
\end{aligned}
\end{equation*}
Since $0\leq \lambda_{0}(t)\leq \lambda(t)$, combining both estimates we obtain
\begin{align*}
&\lambda_{0}(t)\langle \left(y(t) + h(t)\right) - x(t), d_{x}(t)\rangle - \lambda(t)\langle \left(y(t) + h(t)\right) - x(t), d_{y}(t)\rangle\\
&\qquad \leq \lambda_{0}(t)\left(\frac{1}{\rho}\psi^{2}(t)+2\Vert h(t)\Vert\right)+\left(\lambda(t)-\lambda_{0}(t)\right)\left(\frac{1}{2\rho}\psi^{2}(t)+\Vert h(t)\Vert\right)\\
&\qquad =\frac{\lambda(t)+\lambda_{0}(t)}{2}\cdot\frac{\psi^{2}(t)}{\rho}+\left(\lambda(t)+\lambda_{0}(t)\right)\Vert h(t)\Vert \\
&\qquad \leq \frac{\lambda(t)}{\rho}\psi^{2}(t)+2\lambda(t)\Vert h(t)\Vert.
\end{align*}
\item Finally, $\langle \left(y(t) + h(t)\right) - x(t),g(t)b(t)\rangle \leq g(t)\Delta(t)\leq g(t)\left(\psi(t)+\Vert h(t)\Vert\right)$.
\end{enumerate}
Adding up (1)-(5), we get
\begin{align*}
&\left\langle \left(y(t) + h(t)\right) - x(t), \dot{y}(t) - \dot{x}(t)\right\rangle \\
&\leq\left(L_{F}^{\overline{R}}(t) + l_{\overline{R}}^{1}(t) +\frac{L_{C} + \nu(t, h, g)}{\rho}\right)\psi^{2}(t)\\
 & + \left(2L_{F}^{\overline{R}}(t)\Vert h(t)\Vert + l_{\overline{R}}^{1}(t)\Vert h(t)\Vert + l_{\overline{R}}^{2}(t)\int_{0}^t \psi(s)\, ds + g(t)\right)\psi(t)\\
& + L_{F}^{\overline{R}}(t) \Vert h(t)\Vert^{2} + \left(l_{\overline{R}}^{2}(t)\int_{0}^t \psi(s)\, ds + g(t) + 2\left(L_{C}+\nu(t,h,g)\right)\right)\Vert h(t)\Vert.
\end{align*}

\noindent\emph{Step 2: Estimate of $\left\langle h(t), \dot{x}(t) - \dot{y}(t)\right\rangle$.}
We have that
\begin{align*}
    \left\langle h(t), \dot{x}(t) - \dot{y}(t)\right\rangle &\leq \left\|h(t)\right\|\left(\left\|\dot{y}(t)\right\| + \left\|\dot{x}(t)\right\|\right)\\
 &\leq \left\|h(t)\right\|\left(\lambda(t) + \lambda_{0}(t)\right)
 \leq 2\left(L_{C}+\nu(t,h,g)\right)\Vert h(t)\Vert,
\end{align*}
where we have used \eqref{eq:R-monotone} and the velocity bounds of Proposition~\ref{Main_Result_Red}.

\noindent Therefore, from \emph{Step 1} and \emph{Step 2}, we get that
\begin{align*}
&\left\langle x(t) - y(t), \dot{x}(t) - \dot{y}(t)\right\rangle\\
&\leq\left(L_{F}^{\overline{R}}(t) + l_{\overline{R}}^{1}(t) +\frac{L_{C} + \nu(t, h, g)}{\rho}\right)\psi^{2}(t)\\
 & + \left(2L_{F}^{\overline{R}}(t)\Vert h(t)\Vert + l_{\overline{R}}^{1}(t)\Vert h(t)\Vert + l_{\overline{R}}^{2}(t)\int_{0}^t \psi(s)\, ds + g(t)\right)\psi(t)\\
& + L_{F}^{\overline{R}}(t) \Vert h(t)\Vert^{2} + \left(l_{\overline{R}}^{2}(t)\int_{0}^t \psi(s)\, ds + g(t) + 4\left(L_{C}+\nu(t,h,g)\right)\right)\Vert h(t)\Vert.
\end{align*}
Since $\dot\varphi(t)=\psi(t)\dot{\psi}(t)$ for a.e. $t\in I$, the last inequality reads
{\small 
\begin{equation}\label{Bound_2}
\begin{aligned}
\psi(t)\dot{\psi}(t)&\leq\left(L_{F}^{\overline{R}}(t) + l_{\overline{R}}^{1}(t) +\frac{L_{C} + \nu(t, h, g)}{\rho}\right)\psi^{2}(t)\\
& + \left(2L_{F}^{\overline{R}}(t)\Vert h(t)\Vert + l_{\overline{R}}^{1}(t)\Vert h(t)\Vert + l_{\overline{R}}^{2}(t)\int_{0}^t \psi(s)\, ds + g(t)\right)\psi(t)\\
& + L_{F}^{\overline{R}}(t) \Vert h(t)\Vert^{2} + \left(l_{\overline{R}}^{2}(t)\int_{0}^t \psi(s)\, ds + g(t) + 4\left(L_{C}+\nu(t,h,g)\right)\right)\Vert h(t)\Vert,
\end{aligned}
\end{equation}}
and \eqref{Bound_1} follows by substituting $\psi(t)=\sqrt{2\varphi(t)}$ and $\psi(t)\dot{\psi}(t)=\dot{\varphi}(t)$. This finishes the proof of \textit{Claim 2}.
\end{claimproof}

\noindent Finally, applying Proposition~\ref{Gronwall} to \eqref{Bound_1} with  $\beta=\varphi$, $\varepsilon=\mu$ and
\begin{alignat*}{2}
    K_1(t) & =\sqrt{2}\left(2L_{F}^{\overline{R}}(t)\Vert h(t)\Vert + l_{\overline{R}}^{1}(t)\Vert h(t)\Vert + g(t)\right),\\
    K_{2}(t) & =  2\left(L_{F}^{\overline{R}}(t) + l_{\overline{R}}^{1}(t) +\frac{L_{C} + \nu(t, h, g)}{\rho}\right),\qquad K_{3}(t)  = 2\, l_{\overline{R}}^{2}(t),\\
    K_{4}(t) & = 1, \qquad K_{5}(t)  = \sqrt{2}\, l_{\overline{R}}^{2}(t)\Vert h(t)\Vert, \qquad K_{6}(t) =  1,\\
    \mu(t) & =L_{F}^{\overline{R}}(t) \Vert h(t)\Vert^{2}+\left(g(t) + 4\left(L_{C}+\nu(t, h, g)\right)\right)\Vert h(t)\Vert,
\end{alignat*}
all of which are nonnegative and integrable on $I$ (for $L_{F}^{\overline{R}}$, see Remark~\ref{rem:OSL-normalization}(i); for $\nu(\cdot,h,g)$, see Remark~\ref{rem:theta-omega}), we obtain, for all $t\in I$,
\begin{equation*}
\begin{aligned}
       \varphi(t)&\leq \varphi(0)\times\exp\left(\int_0^t \left(\eta_{1}(s) + \eta_{2}(s)\right)\, ds\right)\\
       &+\int_0^t\left[\mu(s) + \eta_{1}(s)\right] \times\exp\left(\int_s^t \left(\eta_{1}(\tau) + \eta_{2}(\tau)\right)\, d\tau\right)\, ds,
\end{aligned}
    \end{equation*}
where, according to Proposition~\ref{Gronwall},
\begin{align*}
\eta_{1}(t) &= K_{1}(t)+K_{5}(t)\int_{0}^{t}K_{6}(s)\, ds\\
&=\sqrt{2}\left(2L_{F}^{\overline{R}}(t)\Vert h(t)\Vert + l_{\overline{R}}^{1}(t)\Vert h(t)\Vert + g(t)\right) + \sqrt{2}\, l_{\overline{R}}^{2}(t)\Vert h(t)\Vert\, t,\\
\eta_{2}(t)&= K_{2}(t)+K_{3}(t)\int_{0}^{t}K_{4}(s)\, ds\\
&=2\left(L_{F}^{\overline{R}}(t) + l_{\overline{R}}^{1}(t) +\frac{L_{C} + \nu(t, h, g)}{\rho}\right) + 2\, l_{\overline{R}}^{2}(t)\, t.
\end{align*}
Since $2\varphi(t)=\Vert x(t)-y(t)\Vert^{2}$ and $2\varphi(0)=\Vert y_{0}-x_{0}\Vert^{2}=\operatorname{dist}^{2}(y_{0},\mathcal{K})$, multiplying the last inequality by $2$ gives exactly the estimate in (a).

\noindent To prove (b), note that if $h\equiv 0$, then $\overline{R}=R(T,0,g)$ and, in inequality \eqref{Bound_2}, we get that
{\small 
\begin{align*}
\psi(t)\dot{\psi}(t)&\leq\left(L_{F}^{\overline{R}}(t) + l_{\overline{R}}^{1}(t) +\frac{L_{C} + \nu(t, 0, g)}{\rho}\right)\psi^{2}(t) + \left(g(t) + l_{\overline{R}}^{2}(t)\int_{0}^t \psi(s)\, ds \right)\psi(t)
\end{align*}}
for a.e. $t\in I$. Set $\eta_{0}(t):=L_{F}^{\overline{R}}(t) + l_{\overline{R}}^{1}(t) +\frac{L_{C} + \nu(t, 0, g)}{\rho}\geq 0$ and $b(t):=g(t) + l_{\overline{R}}^{2}(t)\int_{0}^t \psi(s)\, ds\geq 0$, so that the above inequality reads $\psi(t)\dot\psi(t)\leq \psi(t)\bigl(\eta_{0}(t)\psi(t)+b(t)\bigr)$. Let $t\in\, ]0,T[$ be a point of differentiability of $\psi$ at which the inequality holds. If $\psi(t)>0$, dividing by $\psi(t)$ gives $\dot\psi(t)\leq \eta_{0}(t)\psi(t)+b(t)$. If $\psi(t)=0$, then $t$ is a global minimum of the nonnegative function $\psi$, so that $\dot\psi(t)=0\leq b(t)=\eta_{0}(t)\psi(t)+b(t)$. In both cases (see also \cite[Lemma~2.3]{Vilches2024}),
\begin{align*}
\dot{\psi}(t)\leq g(t) + \left(L_{F}^{\overline{R}}(t) + l_{\overline{R}}^{1}(t) +\frac{L_{C} + \nu(t, 0, g)}{\rho}\right)\psi(t) + l_{\overline{R}}^{2}(t)\int_{0}^t \psi(s)\, ds
\end{align*}
for a.e. $t\in I$. 
\noindent Integrating this on $[0,t]$ and arguing as in the proof of Lemma~\ref{lem:volterra-gronwall}, i.e., passing to the nondecreasing envelope $\Psi(t):=\max_{s\in [0,t]}\psi(s)$ and using that $\int_0^s \psi(\tau)\, d\tau \leq s\, \Psi(s)$, we obtain
\begin{align*}
\Psi(t)\leq \psi(0)+\int_0^t g(s)\, ds+\int_0^t \eta(s)\Psi(s)\, ds \quad \textrm{ for all } t\in I,
\end{align*}
and the classical Gr\"onwall inequality yields (see also \cite[Theorem~3.2]{Vilches2024})
{\small\begin{align*}
       \Vert x(t)-y(t)\Vert\leq \left\|y(0) - x(0)\right\|\times\exp\left(\int_0^t \eta(s)\, ds\right) + \int_0^t g(s)\exp\left(\int_s^t \eta(\tau)\, d\tau\right)\, ds,
    \end{align*}}
where $ \eta(t):= L_{F}^{\overline{R}}(t) + l_{\overline{R}}^{1}(t) +\frac{L_{C} + \nu(t, 0, g)}{\rho} + l_{\overline{R}}^{2}(t)\, t$.\\
\noindent Furthermore, since $x_{0}\in \mathcal{K}$ satisfies $\left\|y(0) - x(0)\right\| = \operatorname{dist}(y_0, \mathcal{K})$, we can write this inequality as it appears in (b), which proves the theorem.
\end{proof}

\section{Applications}\label{section-appl}

\subsection{An Application to Optimal Control}\label{A_Opt_Cont}

\noindent Let $\mathcal{K}\subset C(0)$ be a nonempty bounded set. We define the \emph{attainable set} of the differential inclusion \eqref{Sweeping-Dif1} at time $t\in I$ as
\begin{align*}
			A\left(t, \mathcal{K}\right) := \{ x(t): x(\cdot) \textrm{ solves } \eqref{Sweeping-Dif1} \textrm{ with }  x(0)\in \mathcal{K}\}.
			\end{align*}
We impose the condition  $\mathcal{K}\subset C(0)$ to ensure that the above set is nonempty. Indeed, under the assumptions of Theorem~\ref{Filippov_T},  \cite[Theorem~7]{JNV20252} provides, for every $x_{0}\in \mathcal{K}\subset C(0)$, at least one absolutely continuous solution of \eqref{Sweeping-Dif1}. Moreover, by Proposition~\ref{Main_Result_Red} (applied with $h\equiv 0$ and $g\equiv 0$), every such solution satisfies $x(t)\in C(t)$ and $\Vert x(t)\Vert \leq R(t,0,0)$ for all $t\in I$, so that $A(t,\mathcal{K})$ is a nonempty bounded subset of $C(t)$ and the Hausdorff distance between attainable sets is finite.  No compactness of $A(t,\mathcal{K})$ is needed in what follows: only its nonemptiness and boundedness are used. Recall that, for nonempty bounded sets $A,B\subset \H$,
\begin{align*}
\operatorname{Haus}(A,B):=\max\left\{\sup_{a\in A}d_{B}(a),\ \sup_{b\in B}d_{A}(b)\right\}.
\end{align*}
The following result, a consequence of Theorem \ref{Filippov_T}, demonstrates the continuity of the attainable set with respect to the Hausdorff distance.
\begin{corollary}\label{cor:attainable}
Suppose that the assumptions of Theorem~\ref{Filippov_T} are satisfied with $h\equiv 0$ and $g\equiv 0$. Let $\mathcal{K}_{1}$, $\mathcal{K}_{2}\subset C(0)$ be nonempty compact subsets, and let the constant $r_{0}$ of \eqref{eta-eq} be given by
\begin{align}\label{eq:r0-cor}
r_{0}:=\max\left\{\Vert z\Vert \colon z\in \mathcal{K}_{1}\cup \mathcal{K}_{2}\right\}.
\end{align}
Then, the following inequality holds:
\begin{equation*}
\operatorname{Haus}\left(A\left(t,  \mathcal{K}_{1}\right),A\left(t,  \mathcal{K}_{2}\right)\right)\leq \exp\left(\displaystyle\int_0^t \eta(s)\, ds\right)\operatorname{Haus}\left(\mathcal{K}_{1}, \mathcal{K}_{2}\right)\quad \textrm{ for all } t\in I,
\end{equation*}
where $\overline{R}:=R(T,0,0)$ and $ \eta(t):= L_{F}^{\overline{R}}(t) + l_{\overline{R}}^{1}(t) +\frac{L_{C} + \nu(t, 0, 0)}{\rho} + l_{\overline{R}}^{2}(t)\, t$.
\end{corollary}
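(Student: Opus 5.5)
By symmetry it suffices to bound the excess $\sup_{a\in A(t,\mathcal{K}_{1})} d\bigl(a,A(t,\mathcal{K}_{2})\bigr)$; exchanging the roles of $\mathcal{K}_{1}$ and $\mathcal{K}_{2}$ then gives the other half of the Hausdorff distance. The plan is to apply Theorem~\ref{Filippov_T}(b) with $h\equiv 0$, $g\equiv 0$ and $\mathcal{K}=\mathcal{K}_{2}$ to a trajectory issued from $\mathcal{K}_{1}$.

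First fix $t\in I$ and $a\in A(t,\mathcal{K}_{1})$. Write $a=y(t)$, where $y(\cdot)$ solves \eqref{Sweeping-Dif1} with $y_{0}:=y(0)\in\mathcal{K}_{1}\subset C(0)$. Then $y$ is also a solution of \eqref{Filippov1} with $h\equiv 0$ and $g\equiv 0$.

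Next check the hypotheses of Theorem~\ref{Filippov_T}. Since $\mathcal{K}_{2}$ is compact and nonempty, $\operatorname{Proj}_{\mathcal{K}_{2}}(y_{0})\neq\emptyset$, so we can pick $x_{0}$ in it. The choice \eqref{eq:r0-cor} gives $r_{0}\geq\max\{\Vert x_{0}\Vert,\Vert y_{0}\Vert\}$, which is \eqref{eq:r0}. This is where fixing $r_{0}$ through both sets matters: the functions $R(\cdot,0,0)$ and $\nu(\cdot,0,0)$ are then the same for every pair $(y_{0},x_{0})$. Consequently $\overline{R}=R(T,0,0)$ and $\eta$ do not depend on the chosen trajectory, and that uniformity is what allows the supremum to be taken at the end.

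Theorem~\ref{Filippov_T}(b) with $g\equiv 0$ now provides a solution $x(\cdot)$ of \eqref{Sweeping-Dif1} with $x(0)=x_{0}\in\mathcal{K}_{2}$ such that
\begin{align*}
\Vert x(t)-y(t)\Vert\leq \operatorname{dist}(y_{0},\mathcal{K}_{2})\exp\left(\int_0^t\eta(s)\,ds\right),
\end{align*}
where $\eta$ is the function in the statement ($\nu(t,0,g)=\nu(t,0,0)$ because $g\equiv 0$). Since $x(t)\in A(t,\mathcal{K}_{2})$ and $\operatorname{dist}(y_{0},\mathcal{K}_{2})\leq\operatorname{Haus}(\mathcal{K}_{1},\mathcal{K}_{2})$, this yields $d\bigl(a,A(t,\mathcal{K}_{2})\bigr)\leq \exp\left(\int_0^t\eta\right)\operatorname{Haus}(\mathcal{K}_{1},\mathcal{K}_{2})$.

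Taking the supremum over $a\in A(t,\mathcal{K}_{1})$ bounds the excess, and symmetrizing gives the claimed estimate. Both attainable sets are nonempty and bounded, as noted before the corollary, so the Hausdorff distance is finite.

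The only delicate point is the parameter bookkeeping: $\overline{R}$, $\nu$ and hence $\eta$ must be the same in both directions and for all trajectories. This is guaranteed by the symmetric definition \eqref{eq:r0-cor} of $r_{0}$. Apart from that, the argument is a direct application of Theorem~\ref{Filippov_T}.
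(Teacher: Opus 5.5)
Your proposal is correct and follows essentially the same route as the paper. The paper also applies Theorem~\ref{Filippov_T}(b) with $\mathcal{K}=\mathcal{K}_{2}$ to each trajectory issued from $\mathcal{K}_{1}$, bounds $\operatorname{dist}(y_{0},\mathcal{K}_{2})$ by $\operatorname{Haus}(\mathcal{K}_{1},\mathcal{K}_{2})$, and then exchanges the roles of $\mathcal{K}_{1}$ and $\mathcal{K}_{2}$, which is legitimate because the choice \eqref{eq:r0-cor} makes $r_{0}$, $\overline{R}$ and $\eta$ symmetric and independent of the trajectory.
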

\begin{proof} Since $y_{0}\in \mathcal{K}_{i}$ and $x_{0}\in \mathcal{K}_{j}$ for the trajectories considered below, the choice \eqref{eq:r0-cor} guarantees that hypothesis \eqref{eq:r0} of Theorem~\ref{Filippov_T} is satisfied for all $i,j\in \{1,2\}$. Note also that $\overline{R}$, and hence $\eta$, depends on $\mathcal{K}_{1}$ and $\mathcal{K}_{2}$ only through $r_{0}$. In particular, $\eta$ does not depend on the initial condition $y_{0}$ and is symmetric in $\mathcal{K}_{1},\mathcal{K}_{2}$. Fix $t\in I$. We first show that
\begin{align*}
A\left(t,  \mathcal{K}_{1}\right)\subset A\left(t,  \mathcal{K}_{2}\right) + \exp\left(\displaystyle\int_0^t \eta(s)\, ds\right)\operatorname{Haus}\left(\mathcal{K}_{1}, \mathcal{K}_{2}\right)\mathbb{B}.
\end{align*}
Indeed, let $\overline{y}$ $\in A\left(t,\mathcal{K}_{1}\right)$. By definition, there exists a trajectory $y\colon I \to \mathcal{H}$ solving \eqref{Sweeping-Dif1} with $y_{0}:=y(0)\in \mathcal{K}_{1}$ and $y(t)=\overline{y}$. Since $\mathcal{K}_{1}\subset C(0)$, the function $y(\cdot)$ is a solution of \eqref{Filippov1} with $h\equiv 0$ and $g\equiv 0$. Hence, applying assertion (b) of Theorem~\textnormal{\ref{Filippov_T}} with the compact set $\mathcal{K}_{2}$, and observing that the integral term vanishes because $g\equiv 0$, there exists a solution $x(\cdot)$ of \eqref{Sweeping-Dif1} with $x(0)\in \operatorname{Proj}_{\mathcal{K}_{2}}(y_{0})\subset \mathcal{K}_{2}$ satisfying
\begin{equation*}
\begin{aligned}
\Vert x(s)-y(s)\Vert & \leq \exp\left(\int_0^{s} \eta(\tau)\, d\tau\right)\operatorname{dist}(y_{0},\mathcal{K}_{2})
& \textrm{ for all }  s\in I.
\end{aligned}
\end{equation*}
In particular, $x(t)\in A(t,\mathcal{K}_{2})$ and, since $y_{0}\in \mathcal{K}_{1}$,
\begin{equation*}
\operatorname{dist}(y_{0},\mathcal{K}_{2})\leq \sup_{z\in \mathcal{K}_{1}}\operatorname{dist}(z,\mathcal{K}_{2})\leq \operatorname{Haus}(\mathcal{K}_{1},\mathcal{K}_{2}),
\end{equation*}
so that $\overline{y}=y(t)\in x(t)+\exp\left(\int_0^t \eta(s)\, ds\right)\operatorname{Haus}(\mathcal{K}_{1},\mathcal{K}_{2})\mathbb{B}$, which proves the above inclusion. Exchanging the roles of $\mathcal{K}_{1}$ and $\mathcal{K}_{2}$, which is licit because $r_{0}$ and $\eta$ are symmetric in $\mathcal{K}_{1},\mathcal{K}_{2}$, we obtain
\begin{equation*}
A\left(t,  \mathcal{K}_{2}\right)\subset A\left(t,  \mathcal{K}_{1}\right) + \exp\left(\displaystyle\int_0^t \eta(s)\, ds\right)\operatorname{Haus}\left(\mathcal{K}_{1}, \mathcal{K}_{2}\right)\mathbb{B},
\end{equation*}
and both inclusions together give the desired inequality.
\end{proof}
Since the function $\eta$ depends on $\mathcal{K}_{1}$ and $\mathcal{K}_{2}$ only through the constant $r_{0}$ of \eqref{eq:r0-cor}, Corollary~\ref{cor:attainable} shows that, for each $t\in I$, the map $\mathcal{K}\mapsto A(t,\mathcal{K})$ is Lipschitz continuous with respect to the Hausdorff distance, uniformly on families of compact subsets of $C(0)$ contained in a fixed bounded set. Indeed, if $\mathcal{K}_{1},\mathcal{K}_{2}\subset C(0)\cap r\mathbb{B}$ for a fixed $r>0$, then $r_{0}\leq r$ in \eqref{eq:r0-cor}, hence $R(T,0,0)$, and therefore $\overline{R}$ and $\eta$, may be computed with $r_{0}=r$. We observe that the resulting Lipschitz constant $\exp\bigl(\int_{0}^{T}\eta\bigr)$ is uniform over all such pairs.  In particular, $A(t,\cdot)$ is continuous at every such $\mathcal{K}$.

Assertion (a) of Theorem~\ref{Filippov_T} provides, in the same way, a quantitative comparison between the attainable set of the perturbed inclusion \eqref{Filippov1} and that of \eqref{Sweeping-Dif1}.

\begin{remark}\label{rem:perturbed-attainable}
Let $g\in L^1(I;\mathbb{R}_{+})$, $h\in L^{\infty}(I;\H)$ and let $\mathcal{K}_{1},\mathcal{K}_{2}\subset C(0)$ be nonempty compact sets, with $r_{0}$ given by \eqref{eq:r0-cor}. Denote by
\begin{align*}
A_{h,g}\left(t, \mathcal{K}_{1}\right) := \{ y(t)\colon y(\cdot) \textrm{ solves } \eqref{Filippov1} \textrm{ with }  y(0)\in \mathcal{K}_{1}\}
\end{align*}
the attainable set of the perturbed inclusion, and by $e(A,B):=\sup_{a\in A}d_{B}(a)$ the excess of $A$ over $B$. Then, for all $t\in I$,
\begin{align*}
e\left(A_{h,g}(t,\mathcal{K}_{1}),A(t,\mathcal{K}_{2})\right)^{2}&\leq \operatorname{Haus}^{2}(\mathcal{K}_{1},\mathcal{K}_{2})\exp\left(\int_0^t \left(\eta_{1}+\eta_{2}\right)\right)\\
&\quad +2\int_0^t \left[\mu(s)+\eta_{1}(s)\right]\exp\left(\int_s^t \left(\eta_{1}+\eta_{2}\right)\right)\, ds,
\end{align*}
with $\mu$, $\eta_{1}$ and $\eta_{2}$ as in Theorem~\ref{Filippov_T}. Indeed, it suffices to apply assertion (a) to each solution $y(\cdot)$ of \eqref{Filippov1} issued from $y_{0}\in \mathcal{K}_{1}$ and to bound $\operatorname{dist}(y_{0},\mathcal{K}_{2})$ by $\operatorname{Haus}(\mathcal{K}_{1},\mathcal{K}_{2})$ as above (the estimate holds vacuously if $A_{h,g}(t,\mathcal{K}_{1})=\emptyset$). Only the excess, and not the Hausdorff distance, can be estimated in this way: Theorem~\ref{Filippov_T} produces a solution of \eqref{Sweeping-Dif1} close to a given solution of \eqref{Filippov1}, and not conversely. Taking $\mathcal{K}_{1}=\mathcal{K}_{2}=\mathcal{K}$ and letting $\Vert h\Vert_{\infty}\to 0$ and $\Vert g\Vert_{L^1}\to 0$, the right-hand side tends to $0$, which quantifies the stability of the attainable set with respect to the perturbations $h$ and $g$. To see this, fix $\Vert h\Vert_{\infty}\leq 1$ and $\Vert g\Vert_{L^{1}}\leq 1$ and let $\overline{R}_{\star}$ be the value of $\overline{R}$ associated with these bounds. By Remark~\ref{rem:monotone-radii} the local constants $L_{F}^{\overline{R}}$, $l^{1}_{\overline{R}}$ and $l^{2}_{\overline{R}}$ are then dominated by $L_{F}^{\overline{R}_{\star}}$, $l^{1}_{\overline{R}_{\star}}$ and $l^{2}_{\overline{R}_{\star}}$, while $\int_{0}^{T}\bigl(L_{C}+\nu(s,h,g)\bigr)\, ds=R(T,h,g)-r_{0}$ remains bounded by Remark~\ref{rem:theta-omega}. Consequently $\int_{0}^{T}(\eta_{1}+\eta_{2})$ stays bounded, whereas $\int_{0}^{T}(\mu+\eta_{1})\to 0$, since every term of $\mu+\eta_{1}$ carries a factor $\Vert h(t)\Vert$ or $g(t)$.
\end{remark}

\subsection{A fishery model with a one-sided Lipschitz harvesting rule that is not Lipschitz}\label{subsec:fishery}

We close this section with a model in which the multivalued perturbation is one-sided Lipschitz with constant zero, while it fails to be Lipschitz continuous with respect to the Hausdorff distance on every ball of $\H$. A Filippov-type theorem requiring the Hausdorff Lipschitz continuity of $F(t,\cdot)$ is therefore not applicable, whereas Theorem~\ref{Filippov_T} and Corollary~\ref{cor:attainable} are. The underlying dynamics is the spatially distributed fishery with ecological memory introduced in \cite{JNV20252}. The new ingredient is a threshold harvest-control rule of the type used in fisheries management, which is precisely what destroys Lipschitz continuity.

\noindent \emph{The state space and the moving set.}  Let $\Omega\subset\mathbb{R}^{d}$ be a bounded and measurable set with $0<\vert\Omega\vert<+\infty$ and set $\H:=L^{2}(\Omega)$, the value $x(\xi)$ being interpreted as the biomass density at the location $\xi\in\Omega$. Let $\mathbf{1}\in\H$ be  the constant function equal to $1$, so that $\Vert\mathbf{1}\Vert=\vert\Omega\vert^{1/2}$, and let
\begin{align*}
s_{x}:=\langle x,\mathbf{1}\rangle=\int_{\Omega}x(\xi)\dd\xi
\end{align*}
denote the aggregate biomass. Given $b\colon I\to\H$ with $b(t)\geq 0$ a.e. on $\Omega$ and $\Vert b(t)-b(s)\Vert\leq L_{b}\vert t-s\vert$ for all $s,t\in I$, define the moving safe set
\begin{equation*}
C(t):=\left\{x\in\H\colon x\geq b(t)\ \textrm{ a.e. on }\Omega\right\}=b(t)+\mathcal{P},\qquad \mathcal{P}:=\left\{z\in\H\colon z\geq 0\right\}.
\end{equation*}
Each $C(t)$ is nonempty, closed and convex, hence uniformly $\rho$-prox-regular with $\rho=+\infty$, and $\operatorname{Haus}(C(t),C(s))\leq\Vert b(t)-b(s)\Vert$. Thus \ref{HC} holds with $L_{C}=L_{b}$ and $1/\rho=0$. Let us stress that \ref{HC3} fails as soon as $\dim\H=+\infty$: if $(\Omega_{n})_{n}$ are pairwise disjoint subsets of $\Omega$ of positive measure, the functions $\vert\Omega_{n}\vert^{-1/2}\mathbf{1}_{\Omega_{n}}$ belong to $\mathcal{P}\cap\mathbb{B}\subset C(t)\cap(1+\Vert b(t)\Vert)\mathbb{B}$ when $b(t)=0$ and are mutually at distance $\sqrt{2}$. Assumption \ref{H4F} is therefore the operative compactness hypothesis here, and this is what dictates the aggregate form of the harvesting rule below.

\noindent \emph{Recruitment, mortality and ecological memory.} Set $\psi(z):=z/(1+\vert z\vert)$ and let $r,\mu\colon I\times\Omega\to\mathbb{R}_{+}$ be measurable with $q(t):=\Vert r(t,\cdot)\Vert_{L^{\infty}(\Omega)}+\Vert\mu(t,\cdot)\Vert_{L^{\infty}(\Omega)}\in L^{1}(I)$. Define $f_{1}(t,x)(\xi):=r(t,\xi)\psi(x(\xi))-\mu(t,\xi)x(\xi)$, where $r$ is the saturating recruitment and $\mu$ the natural mortality. Since $\psi$ is $1$-Lipschitz and bounded by $1$, assumption \ref{Hf} holds with $l^{1}_{r}=q$ for every $r>0$ and $a_{1}=\max\{1,\vert\Omega\vert^{1/2}\}\,q$. Let $k\colon D\times\Omega\times\Omega\to\mathbb{R}$ be measurable with 
$$\Vert k(t,s,\cdot,\cdot)\Vert_{L^{2}(\Omega\times\Omega)}\leq\kappa(t) \textrm{ for a.e. } (t,s)\in D,
$$ and some $\kappa\in L^{1}(I;\mathbb{R}_{+})$, and let $f_{2}(t,s,x):=\mathcal{K}(t,s)x$ be the associated Hilbert--Schmidt operator, which models delayed recruitment and accumulated environmental effects. Then \ref{Hg} holds with $l^{2}_{r}=\kappa$ for every $r>0$ and $a_{2}(t,s)=\kappa(t)$, the latter being integrable on $D$ because $\int_{0}^{T}t\,\kappa(t)\dd t\leq T\Vert\kappa\Vert_{L^{1}(I)}$.

\noindent \emph{The harvesting rule.} Let $\underline{h},\overline{h}\colon I\to\mathbb{R}_{+}$ be measurable with $\underline{h}\leq\overline{h}$ and $\overline{h}\in L^{1}(I)$, let $U\colon I\rightrightarrows\mathbb{R}^{m}$ be measurable with nonempty compact convex values, and let $B\colon I\to\mathcal{L}(\mathbb{R}^{m},\H)$ be strongly measurable with 
$$d_{U}(t):=\Vert B(t)\Vert\max_{u\in U(t)}\vert u\vert\in L^{1}(I).
$$ Finally, let $\gamma\colon\mathbb{R}\rightrightarrows\mathbb{R}$ be a maximal monotone operator with $\operatorname{dom}\gamma=\mathbb{R}$ and $\gamma(\sigma)\subset[0,\overline{\zeta}\,]$ for all $\sigma$, and write $\gamma(\sigma)=[\gamma^{-}(\sigma),\gamma^{+}(\sigma)]$. The multivalued perturbation is
\begin{equation}\label{eq:fishery-F}
F(t,x):=\Bigl\{-h\,x+B(t)u-\zeta\,\mathbf{1}\ \colon\ h\in[\underline{h}(t),\overline{h}(t)],\ u\in U(t),\ \zeta\in\gamma(s_{x})\Bigr\}.
\end{equation}
Here $h$ is an admissible fishing-mortality rate, $B(t)u$ describes a finite dimensional family of spatial management interventions, and the last term is a harvest-control rule triggered by the \emph{aggregate} biomass: the prototype is the hockey-stick rule $\gamma=\overline{\zeta}\,\partial(\cdot-\sigma_{0})^{+}$, that is, $\gamma(\sigma)=\{0\}$ for $\sigma<\sigma_{0}$, $\gamma(\sigma_{0})=[0,\overline{\zeta}\,]$ and $\gamma(\sigma)=\{\overline{\zeta}\,\}$ for $\sigma>\sigma_{0}$, which prescribes no additional extraction below the reference level $\sigma_{0}$ and a constant extraction above it. The rule acts along the fixed direction $\mathbf{1}$. A pointwise-in-$\xi$ threshold would generate a term with no compactness and would destroy \ref{H4F}, which, as observed above, cannot be dispensed with in this model.

\begin{proposition}\label{prop:fishery}
Under the above hypotheses, the set-valued map $F$ defined in \eqref{eq:fishery-F} satisfies \ref{HF}, \ref{H1F}, \ref{H2F}, \ref{H3F} with $c=\overline{h}$ and $m=d_{U}+\overline{\zeta}\vert\Omega\vert^{1/2}$, \ref{H4F} with $k_{r}=\overline{h}$ for every $r>0$, and \ref{HOSL} with $L_{F}^{r}\equiv 0$ for every $r>0$. If, in addition, $\gamma$ is discontinuous at some $\sigma_{0}>0$, that is, $\gamma^{-}(\sigma_{0})<\gamma^{+}(\sigma_{0})$, then for every $\varrho>\sigma_{0}\vert\Omega\vert^{-1/2}$ the map $F(t,\cdot)$ is not Lipschitz continuous on $\varrho\mathbb{B}$ with respect to the Hausdorff distance.
\end{proposition}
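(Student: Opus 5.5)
We verify the properties one at a time, starting with the structural ones. For fixed $(t,x)$, $F(t,x)=-[\underline h(t),\overline h(t)]\,x+B(t)U(t)-\gamma(s_x)\mathbf 1$ is a Minkowski sum of three sets. The first is a compact segment. The second is the linear image of a compact convex set, so it is compact and convex. The third is a compact segment, since $\gamma(\sigma)$ is a nonempty compact interval: maximal monotone on $\mathbb R$ with full domain and bounded. Hence $F(t,x)$ is nonempty, convex and compact, which gives \ref{HF}. The growth bound \ref{H3F} follows from $\Vert hx\Vert\le\overline h(t)\Vert x\Vert$, $\Vert B(t)u\Vert\le d_U(t)$ and $\Vert\zeta\mathbf 1\Vert\le\overline\zeta\vert\Omega\vert^{1/2}$.

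For \ref{H2F}, take $x_n\to x$ strongly and $v_n=-h_nx_n+B(t)u_n-\zeta_n\mathbf 1\rightharpoonup v$. Extract convergent subsequences $h_n\to h$, $u_n\to u$, $\zeta_n\to\zeta$ in finite dimensions. The sums $s_{x_n}\to s_x$, and the graph of a maximal monotone operator on $\mathbb R$ is closed, so $\zeta\in\gamma(s_x)$. Then $v_n\to -hx+B(t)u-\zeta\mathbf 1$ strongly, and this limit must equal $v$.

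For \ref{H1F}, I would write $\gph F(t,\cdot)$ as the image under the continuous map $(x,h,u,\zeta)\mapsto(x,-hx+B(t)u-\zeta\mathbf 1)$ of the measurable closed-valued multifunction $t\rightrightarrows\{(x,h,u,\zeta):h\in[\underline h,\overline h],u\in U(t),(s_x,\zeta)\in\gph\gamma\}$. I would then conclude via Castaing representations, using the strong measurability of $B$ and the projection theorem \cite[Theorem~III.30]{Castaing_Valadier-1977}, exactly as in the proof of property (ii) of $S$ in Theorem~\ref{Filippov_T}.

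For \ref{H4F}, $F(t,A)\subset -[0,\overline h(t)]A+B(t)U(t)-[0,\overline\zeta]\mathbf 1$. The last two sets are compact, so they do not change $\chi$. The set $[0,\overline h]A$ is contained in $\overline h\,\mathrm{co}(A\cup\{0\})$, and $\chi$ is invariant under convex hulls and under adjoining a point. This yields $k_r=\overline h$.

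For \ref{HOSL} with constant $0$, take $v=-h x+B u-\zeta\mathbf 1\in F(t,x)$. Choose $w=-hy+Bu-\zeta'\mathbf 1$ with the same $h,u$ and any $\zeta'\in\gamma(s_y)$. Then
\begin{equation*}
\langle x-y,v-w\rangle=-h\Vert x-y\Vert^2-(\zeta-\zeta')(s_x-s_y)\le0,
\end{equation*}
because $h\ge0$ and $\gamma$ is monotone.

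For the non-Lipschitz claim, pick $\sigma_0>0$ with $\gamma^-(\sigma_0)<\gamma^+(\sigma_0)$ and $\varrho>\sigma_0\vert\Omega\vert^{-1/2}$. Let $x_\pm=\sigma_\pm\vert\Omega\vert^{-1}\mathbf 1$ with $\sigma_-<\sigma_0<\sigma_+$ close to $\sigma_0$; these lie in $\varrho\mathbb B$ by the choice of $\varrho$. Then $\Vert x_+-x_-\Vert=(\sigma_+-\sigma_-)\vert\Omega\vert^{-1/2}\to0$. By monotonicity and maximality, $\gamma(\sigma_-)\subset[0,\gamma^-(\sigma_0)]$, $\gamma(\sigma_+)\subset[\gamma^+(\sigma_0),\overline\zeta]$, and the jump size is $\delta:=\gamma^+(\sigma_0)-\gamma^-(\sigma_0)>0$. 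The main obstacle is to show that the Hausdorff distance between $F(t,x_+)$ and $F(t,x_-)$ stays bounded below, even though the factors $-hx$ and $B(t)U(t)$ could partially compensate along $\mathbf 1$. I would first handle this by testing $F(t,x_\pm)$ against the support function in direction $\mathbf 1$ and bounding the $-hx$ contribution by $\overline h\,\Vert x_+-x_-\Vert\,\vert\Omega\vert^{1/2}\to0$. This gives $\operatorname{Haus}\ge\delta\vert\Omega\vert^{1/2}-o(1)$, while $\Vert x_+-x_-\Vert\to0$, so no Lipschitz constant can exist.

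The $B(t)U(t)$ term is the same for both sets, so it cancels in the support-function difference. The cleanest bound uses $\sup_{v\in F(t,x_+)}\langle v,\mathbf 1\rangle-\sup_{v\in F(t,x_-)}\langle v,\mathbf 1\rangle$, i.e. the difference of support functions in direction $-\mathbf 1$ when comparing infima. The Hausdorff distance is at least this difference divided by $\Vert\mathbf 1\Vert$, which gives the bound $\delta\vert\Omega\vert^{1/2}-o(1)$.
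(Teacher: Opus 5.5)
Your proof is correct. It has the same architecture as the paper's: the values, growth, graph-closedness and OSL checks coincide, and the non-Lipschitz part is the same support-function test in the direction $\mathbf 1$. Two sub-arguments, however, take a different route.

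For \ref{H4F}, you use $[0,\overline h(t)]A\subset\overline h(t)\,\mathrm{co}(A\cup\{0\})$ together with the invariance of $\chi$ under convex hulls. The paper instead covers $\Lambda(t)=[\underline h(t),\overline h(t)]$ by a finite $\delta$-net, writes $\Lambda(t)A\subset\bigcup_i h_iA+\delta r\mathbb B$, and lets $\delta\downarrow 0$. Yours is a one-liner but rests on the Darbo-type convex-hull property. The paper's needs only the max-rule for finite unions, homogeneity and subadditivity.

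For \ref{H1F}, you push a Castaing representation of the product of $\Lambda(t)$, $U(t)$ and the fixed closed set $\{(x,\zeta)\colon (s_x,\zeta)\in\gph\gamma\}$ forward through the Carath\'eodory map $(h,u,x,\zeta)\mapsto(x,-hx+B(t)u-\zeta\mathbf 1)$. The paper instead characterizes membership in $F(t,x)$ through the jointly measurable function $d(w,\Psi(t,x))$, the Borel functions $\gamma^{\pm}$ and a rational interpolation in $\zeta$, and then invokes the projection theorem. Your route needs neither $\gamma^{\pm}$ nor the projection theorem, since a product of Castaing representations is a Castaing representation of the product. You should, however, state the step that makes it work: by continuity of the map, the image of the countable dense family is dense in $\gph F(t,\cdot)$. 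Hence the graph meets an open set iff one of these countably many measurable points does, which is all \ref{H1F} requires.

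In the last step, your straddling points and the crude bound $\overline h(t)\Vert x_+-x_-\Vert$ on the fishing term give $\operatorname{Haus}\geq\delta\vert\Omega\vert^{1/2}-o(1)$, which suffices. The paper instead computes the support functions exactly (using $s_x>0$) and drops the fishing term via $\underline h\geq 0$, obtaining a bound with no $o(1)$ error. One small slip in your final paragraph: $\sigma_{F(t,x_+)}(\mathbf 1)-\sigma_{F(t,x_-)}(\mathbf 1)$ is negative, approximately $-\delta\vert\Omega\vert$. It is its absolute value that bounds $\Vert\mathbf 1\Vert\operatorname{Haus}$ from below.
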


\begin{proof}
Throughout, fix $t\in I$ and write $\Lambda(t):=[\underline{h}(t),\overline{h}(t)]$ and $\Psi(t,x):=\{-hx+B(t)u\colon h\in\Lambda(t),\ u\in U(t)\}$, so that $F(t,x)=\Psi(t,x)-\gamma(s_{x})\mathbf{1}$.

\noindent \emph{Values.} The set $F(t,x)$ is the image of the nonempty compact convex set $\Lambda(t)\times U(t)\times\gamma(s_{x})\subset\mathbb{R}\times\mathbb{R}^{m}\times\mathbb{R}$ under the linear map $(h,u,\zeta)\mapsto -hx+B(t)u-\zeta\mathbf{1}$. Hence it is nonempty, convex and compact, and \ref{HF} holds. 

\noindent \emph{Growth.} For $v\in F(t,x)$ one has $\Vert v\Vert\leq\overline{h}(t)\Vert x\Vert+d_{U}(t)+\overline{\zeta}\vert\Omega\vert^{1/2}$, which is \ref{H3F} with the announced $c$ and $m$, both integrable on $I$.

\noindent \emph{Closedness of the graph.} Let $x_{n}\to x$ strongly and $v_{n}\rightharpoonup v$ weakly with $v_{n}\in F(t,x_{n})$, say $v_{n}=-h_{n}x_{n}+B(t)u_{n}-\zeta_{n}\mathbf{1}$ with $h_{n}\in\Lambda(t)$, $u_{n}\in U(t)$ and $\zeta_{n}\in\gamma(s_{x_{n}})\subset[0,\overline{\zeta}\,]$. All three sequences range in compact sets, so, along a subsequence, $h_{n}\to h\in\Lambda(t)$, $u_{n}\to u\in U(t)$ and $\zeta_{n}\to\zeta$. Since $s_{x_{n}}\to s_{x}$ and $\gph\gamma$ is closed, $\zeta\in\gamma(s_{x})$. Consequently $v_{n}\to-hx+B(t)u-\zeta\mathbf{1}$ strongly, and this limit equals $v$. Thus $v\in F(t,x)$ and \ref{H2F} holds.

\noindent \emph{Measurability.} The map $t\rightrightarrows\Lambda(t)$ is measurable, and so is $t\rightrightarrows B(t)U(t)$: if $U(t)=\overline{\{u_{n}(t)\colon n\in\mathbb{N}\}}$ is a Castaing representation, then 
$$
B(t)U(t)=\overline{\{B(t)u_{n}(t)\colon n\in\mathbb{N}\}}
$$
and each $t\mapsto B(t)u_{n}(t)$ is measurable. Hence $t\rightrightarrows\Psi(t,x)$ is measurable for each fixed $x$, while $\Psi(t,\cdot)$ is $\overline{h}(t)$-Lipschitz for the Hausdorff distance.    Therefore $(t,x,w)\mapsto d(w,\Psi(t,x))$ is measurable in $t$ and continuous in $(x,w)$, hence $\mathcal{I}\otimes\mathcal{B}(\H\times\H)$-measurable. The functions $\gamma^{\pm}$ are nondecreasing, hence Borel, and, by continuity in $\lambda$,
\begin{align*}
v\in F(t,x)\iff\inf_{\lambda\in\mathbb{Q}\cap[0,1]}d\Bigl(v+\bigl((1-\lambda)\gamma^{-}(s_{x})+\lambda\gamma^{+}(s_{x})\bigr)\mathbf{1},\ \Psi(t,x)\Bigr)=0.
\end{align*}
Thus $\gph F$ is $\mathcal{I}\otimes\mathcal{B}(\H\times\H)$-measurable and $\gph F(t,\cdot)$ is closed in $\H\times\H$.  Since $\mathcal{I}$ is complete and $\H\times\H$ is Polish, \cite[Theorem~III.30]{Castaing_Valadier-1977} yields \ref{H1F}.

\noindent \emph{Compactness.} Let $r>0$ and $\emptyset\neq A\subset r\mathbb{B}$. Since $U(t)$ is compact and $B(t)$ is bounded and linear, $B(t)U(t)$ is compact.  The segment $[0,\overline{\zeta}\,]\mathbf{1}$ is compact as well, so both have vanishing measure of noncompactness. Moreover $\chi(\Lambda(t)A)\leq\overline{h}(t)\chi(A)$: given $\delta>0$, cover the compact set $\Lambda(t)$ by finitely many points $h_{1},\ldots,h_{N}$ within $\delta$, so that $\Lambda(t)A\subset\bigcup_{i}h_{i}A+\delta r\mathbb{B}$, and use that $\chi$ is invariant under finite unions through the maximum, that $\chi(h_{i}A)=\vert h_{i}\vert\chi(A)$, and let $\delta\downarrow 0$. By subadditivity, $\chi(F(t,A))\leq\overline{h}(t)\chi(A)$, which is \ref{H4F} with $k_{r}=\overline{h}$, independently of $r$.

\noindent \emph{One-sided Lipschitz property.} Let $x,y\in\H$ and $v=-h_{\ast}x+B(t)u_{\ast}-\zeta_{x}\mathbf{1}\in F(t,x)$, with $h_{\ast}\in\Lambda(t)$, $u_{\ast}\in U(t)$ and $\zeta_{x}\in\gamma(s_{x})$. Pick any $\zeta_{y}\in\gamma(s_{y})$, which is possible because $\operatorname{dom}\gamma=\mathbb{R}$, and set $w:=-h_{\ast}y+B(t)u_{\ast}-\zeta_{y}\mathbf{1}\in F(t,y)$. Since $\langle x-y,\mathbf{1}\rangle=s_{x}-s_{y}$, the monotonicity of $\gamma$ gives
\begin{align*}
\langle x-y,v-w\rangle=-h_{\ast}\Vert x-y\Vert^{2}-(\zeta_{x}-\zeta_{y})(s_{x}-s_{y})\leq-\underline{h}(t)\Vert x-y\Vert^{2}\leq 0,
\end{align*}
so that \ref{HOSL} holds with $L_{F}^{r}\equiv 0$ for every $r>0$.

\noindent \emph{Failure of Lipschitz continuity.} Recall that if $\operatorname{Haus}(A,B)\leq\delta$ for nonempty bounded sets $A,B$, then $A\subset B+\delta\mathbb{B}$ and $B\subset A+\delta\mathbb{B}$, whence $\vert\sigma_{A}(p)-\sigma_{B}(p)\vert\leq\delta$ for every $p\in\mathbb{B}$, where $\sigma_{A}(p):=\sup_{a\in A}\langle a,p\rangle$. Take $p:=\vert\Omega\vert^{-1/2}\mathbf{1}$, so that $\Vert p\Vert=1$, and set 
\begin{align*}
x_{0}:=\frac{\sigma_{0}}{\vert\Omega\vert}\mathbf{1},\qquad x_{\epsilon}:=x_{0}+\frac{\epsilon}{\vert\Omega\vert}\mathbf{1}\quad(\epsilon>0),
\end{align*}
so that $s_{x_{0}}=\sigma_{0}$, $s_{x_{\epsilon}}=\sigma_{0}+\epsilon$, $\Vert x_{0}\Vert=\sigma_{0}\vert\Omega\vert^{-1/2}$ and $\Vert x_{\epsilon}-x_{0}\Vert=\epsilon\vert\Omega\vert^{-1/2}$. Both points lie in $\varrho\mathbb{B}$ for $\epsilon$ small. Since $s_{x_{0}},s_{x_{\epsilon}}>0$, one has 
$$\sup_{h\in\Lambda(t)}\langle-hx,p\rangle=-\underline{h}(t)s_{x}\vert\Omega\vert^{-1/2} \textrm{ for  } x\in\{x_{0},x_{\epsilon}\},
$$ while $\sup_{\zeta\in\gamma(s_{x})}\langle-\zeta\mathbf{1},p\rangle=-\gamma^{-}(s_{x})\vert\Omega\vert^{1/2}$. Set $\beta(t):=\sup_{u\in U(t)}\langle B(t)u,p\rangle$. Then,  \begin{align*}
\sigma_{F(t,x_{0})}(p)-\sigma_{F(t,x_{\epsilon})}(p)&=\underline{h}(t)\frac{\epsilon}{\vert\Omega\vert^{1/2}}+\bigl(\gamma^{-}(\sigma_{0}+\epsilon)-\gamma^{-}(\sigma_{0})\bigr)\vert\Omega\vert^{1/2}\\
&\geq\bigl(\gamma^{+}(\sigma_{0})-\gamma^{-}(\sigma_{0})\bigr)\vert\Omega\vert^{1/2},
\end{align*}
because $\underline{h}\geq 0$ and, by monotonicity of $\gamma$, $\gamma^{-}(\sigma_{0}+\epsilon)\geq\gamma^{+}(\sigma_{0})$. Therefore
\begin{align*}
\operatorname{Haus}\bigl(F(t,x_{0}),F(t,x_{\epsilon})\bigr)\geq\bigl(\gamma^{+}(\sigma_{0})-\gamma^{-}(\sigma_{0})\bigr)\vert\Omega\vert^{1/2}>0,
\end{align*}
while $\Vert x_{\epsilon}-x_{0}\Vert=\frac{\epsilon}{\vert\Omega\vert^{1/2}}\xrightarrow[\epsilon\downarrow 0]{}0$.
 So no integrable function $\kappa_{\varrho}$ can satisfy the estimate of Remark~\ref{rem:OSL-normalization}(ii) on $\varrho\mathbb{B}$.
\end{proof}
Proposition~\ref{prop:fishery} shows that all the assumptions of Theorem~\ref{Filippov_T} are satisfied by the fishery model, with \ref{H4F} as the compactness hypothesis, and that the classical Lipschitz framework is genuinely out of reach. Since $L_{F}^{\overline{R}}\equiv 0$, $1/\rho=0$, $l^{1}_{\overline{R}}=q$ and $l^{2}_{\overline{R}}=\kappa$ irrespective of the radius $\overline{R}$, the exponent of Corollary~\ref{cor:attainable} reduces to $\eta(t)=q(t)+t\,\kappa(t)$, so that, for all nonempty compact sets $\mathcal{K}_{1},\mathcal{K}_{2}\subset C(0)$ and all $t\in I$,
\begin{align*}
\operatorname{Haus}\bigl(A(t,\mathcal{K}_{1}),A(t,\mathcal{K}_{2})\bigr)\leq\exp\left(\int_{0}^{t}\bigl(q(s)+s\,\kappa(s)\bigr)\dd s\right)\operatorname{Haus}(\mathcal{K}_{1},\mathcal{K}_{2}).
\end{align*}
The stability constant is thus governed exclusively by the biological data, namely the recruitment and mortality rates through $q$ and the ecological memory kernel through $\kappa$: it is independent of the admissible fishing-mortality range $[\underline{h},\overline{h}]$, of the management set $U$, of the operator $B$, and of the threshold rule $\gamma$. This is a genuine gain of the one-sided Lipschitz framework, since the sharpest Hausdorff Lipschitz modulus available for the map \eqref{eq:fishery-F} when $\gamma\equiv 0$ is $\overline{h}(t)$, and none exists at all when $\gamma$ has a jump. In the same way, assertion (a) of Theorem~\ref{Filippov_T} and Remark~\ref{rem:perturbed-attainable} quantify how far the attainable biomass profiles can drift when the aggregate biomass in the control rule is observed with an error $h$ and when the management term is subject to an outer uncertainty $g$.

\section{Conclusions}\label{Concl}
In this article, we have studied an integro-differential Volterra sweeping process with a compact outer multivalued perturbation satisfying a \textit{one-sided Lipschitz} (OSL) condition. The proof of the main result combines a reduction result establishing the equivalence between Volterra sweeping processes with outer multivalued perturbations and inner perturbations of the state variable and the corresponding unconstrained differential inclusions, existence results obtained under suitable compactness assumptions on either the moving constraint sets or the outer multivalued perturbation, an appropriate selection result, and an enhanced version of Gr\"onwall's inequality. These tools allow us to establish a Filippov-type stability theorem for this system under a \textit{one-sided Lipschitz} (OSL) condition on the outer multivalued perturbation. As an application, we prove the continuity of the associated attainable set with respect to the Hausdorff distance. We have also illustrated the scope of the one-sided Lipschitz framework on a spatially distributed fishery model with ecological memory, in which the harvest-control rule triggered by the aggregate biomass is one-sided Lipschitz with constant zero while failing to be Lipschitz continuous for the Hausdorff distance on every ball (see Proposition~\ref{prop:fishery}). For this model, the stability constant of Corollary~\ref{cor:attainable} depends only on the recruitment and mortality rates and on the memory kernel, and not on the admissible fishing-mortality range nor on the threshold rule itself. 

The results obtained in this article also open several directions for future research. One of them concerns the study of strong invariance and the minimal time problem for the nonautonomous sweeping process obtained as the particular case ($f_{1}=f_{2}=0$) of the integro-differential Volterra sweeping process considered in this article. In this regard, the works of Dontchev, R\'ios, and Wolenski \cite{Dontchev_T-Rios-Wolenski-2005}, R\'ios and Wolenski  \cite{Rios2025Wolenski}, and Hermosilla, Palladino, and Vilches \cite{hermosilla2024hamilton} seem promising for extending the present results to this nonautonomous sweeping process.

Another natural research direction concerns the study of weak stability and weak asymptotic stability for the integro-differential Volterra sweeping process considered in this article. Along these lines, the works of Roxin \cite{Roxin} on generalized dynamical systems and Smirnov \cite{Smirnov} on weak asymptotic stability provide a natural starting point for investigating these stability properties within the framework developed in this article. In particular, the continuity of attainable sets with respect to the Hausdorff distance established in this article suggests that the present framework may provide a suitable setting for the study of weak stability and weak asymptotic stability.

\section*{Acknowledgements}

This work was supported by ANID/BASAL~FB210005
(Centro de Modelamiento Matemático, CMM), and
ANID/ECOS~ECOS230027. Diana Narv\'aez was supported byANID Chile under the grant Postdoctorado~3240146.
Emilio Vilches was supported by ANID Chile under the grants  Regular~1240120 and Regular~1261728.

\section*{Data Availability}

Data sharing is not applicable to this article as no datasets were generated or analyzed during the current study.

\bibliographystyle{plain}
\bibliography{references}

\end{document}